\author{Aijin Lin\\School of Mathematical Sciences, Peking University\\ Beijing 100871, China\\
Email: linaj327@gmail.com}
\title{Index computations in FJRW theory}
\date{}
\begin{document}
\maketitle
\renewcommand{\abstractname}{Abstract}
\begin{abstract}
We compute the index of the real Cauchy-Riemann operator defined in FJRW theory in case of the smooth metric. For the cylindrical metric, we study the relation between the index of the linearized operator of Witten map and weights in weighted Sobolev space.
\end{abstract}
\vspace{12pt}
\let\thefootnote\relax \footnotetext {The author's name Aijin Lin was earlier known as Ling Lin. I am deeply sorry for any inconvenience that I have brought to you.}
\newtheorem{theorem}{Theorem}[section]
\newtheorem{lemma}[theorem]{Lemma}
\section{Introduction}
\hspace{14pt} In this paper we mainly investigate two index problems coming from $FJRW$ theory (see [10, 11, 12]) constructed
by Huijun Fan, Tyler J. Jarvis, Yongbin Ruan based on a proposal of Edward Witten.\par
One is a concrete index computation problem of the real Cauchy-Riemann operator under the smooth metric, the other is about relation between the index of the linearized operator of Witten map [12] and weights in weighted Sobolev space in case of the cylindrical metric. They are both interesting and also important for $FJRW$ theory.\par
First of all let us review the background of these problems. We adopt notations of [10] in the following discussion.\par
Let $W\in \mathbb{C}[x_{1},\cdots,x_{t}]$ be a quasi-homogeneous polynomial, i.e., there exists degrees $d,k_{1},\cdots,k_{t}\in \mathbb{Z}^{>0}$ such that for any $\lambda\in \mathbb{C}^{\ast}$\\
\begin{equation}
W(\lambda^{k_{1}}x_{1},\cdots,\lambda^{k_{t}}x_{t})=\lambda^{d}W(x_{1},\cdots,x_{t}).\\
\end{equation}
\\
$\textbf{Definition 1.1.}$  W is called $nondegenerate$ if\\
\\
(1) the fractional degrees $q_{i}=\frac{k_{i}}{d}$ are uniquely determined by $W$; and\\
(2) the hypersurface defined by $W$ in weighted projective space is non-singular, or
equivalently, the affine hypersurface defined by $W$ has an isolated singularity at
the origin.
\begin{lemma}([10])
If $W$ is nondegenerate, then the group
   \begin{equation}
   H:=\{(\alpha_{1},\cdots,\alpha_{t})\in (\mathbb{C}^{\ast})^{t}|W(\alpha_{1}x_{1},\cdots,\alpha_{t}x_{t})=W(x_{1},\cdots,x_{t})\}
   \end{equation}
   of diagonal symmetries of $W$ is finite. In particular, we have
 \begin{equation}     H\subseteq \mu_{d/k_{1}}\times\cdots\times \mu_{d/k_{t}}\cong k_{1}Z/d\times\cdots k_{t}Z/d,  \end{equation}\\
     where $\mu_{l}$ is the group of lth roots of unity.
\end{lemma}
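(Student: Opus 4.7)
The plan is to rewrite $H$ as the kernel of a character homomorphism on $(\mathbb{C}^{*})^{t}$ and then use the two nondegeneracy conditions to pin that kernel down. Writing $W=\sum_{J\in\mathcal{S}}c_{J}x^{J}$ with $\mathcal{S}\subset\mathbb{Z}_{\geq 0}^{t}$ the finite exponent support, quasi-homogeneity forces $\sum_{i}k_{i}j_{i}=d$ for every $J\in\mathcal{S}$. Substituting $x_{i}\mapsto\alpha_{i}x_{i}$ and equating coefficients shows $(\alpha_{1},\ldots,\alpha_{t})\in H$ iff $\alpha^{J}:=\prod_{i}\alpha_{i}^{j_{i}}=1$ for every $J\in\mathcal{S}$, so $H=\ker\phi$ for the homomorphism $\phi:(\mathbb{C}^{*})^{t}\to(\mathbb{C}^{*})^{|\mathcal{S}|}$, $\alpha\mapsto(\alpha^{J})_{J}$.

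For finiteness, $\ker\phi$ is finite iff the lattice $L:=\mathbb{Z}\mathcal{S}$ has full rank $t$ in $\mathbb{Z}^{t}$. I would deduce this from condition~(1) directly: if $L$ had rank less than $t$, choose $0\neq v\in\mathbb{Z}^{t}$ with $\langle v,J\rangle=0$ for every $J\in\mathcal{S}$. Then $W(\lambda^{k_{1}+sv_{1}}x_{1},\ldots,\lambda^{k_{t}+sv_{t}}x_{t})=\lambda^{d}W$ for every integer $s$, yielding a one-parameter family of inequivalent fractional degree vectors $(k+sv)/d$ and contradicting the uniqueness of the $q_{i}$. For the inclusion $H\subseteq\prod_{i}\mu_{d/k_{i}}$, where I take $\mu_{d/k_{i}}$ to be the cyclic subgroup of $\mathbb{C}^{*}$ generated by $e^{2\pi i k_{i}/d}$ (of order $d/\gcd(d,k_{i})$), Pontryagin duality identifies the claim with the dual lattice inclusion $\bigoplus_{i}(d/\gcd(d,k_{i}))\,e_{i}\subseteq L$. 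To produce these lattice elements I would invoke condition~(2) via the Nullstellensatz: since the origin is the only critical point, each $x_{i}^{N_{i}}$ lies in the Jacobian ideal $(\partial_{1}W,\ldots,\partial_{t}W)$; extracting the quasi-homogeneous piece of degree $N_{i}k_{i}$ and replacing each $\partial_{j}W$ by its monomial expansion (whose exponents are of the form $J-e_{j}$ with $J\in\mathcal{S}$) produces the required integer combination of vectors from $\mathcal{S}$.

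The main obstacle is this last step. Finiteness drops out formally from condition~(1), but the precise cyclic bound is a numerical claim not implied by finiteness alone; it requires careful bookkeeping of which exponents actually appear in $W$ and of their interaction with the Jacobian relations. A cleaner alternative is to invoke the Kreuzer--Skarke classification of nondegenerate quasi-homogeneous polynomials into Fermat, chain, and loop atoms and verify the inclusion atom by atom, but then the real work is absorbed into the classification theorem. Either route should succeed; I expect the Jacobian-ideal analysis to be the technically heaviest part of the argument.
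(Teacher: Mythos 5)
The paper itself offers no proof of this lemma --- it is quoted verbatim from [10] --- so I can only assess your plan on its own terms. The first half is fine: rewriting $H$ as the kernel of $\alpha\mapsto(\alpha^J)_{J\in\mathcal{S}}$ and deducing finiteness from condition (1) is the standard argument (you should add that the perturbed weight vector $(k+sv)/d$ must be made positive, e.g.\ by passing to $(Nk+sv)/(Nd)$ for large $N$, since the definition requires positive integer weights). Your dual reformulation of the containment is also correct: for a full-rank sublattice $L=\mathbb{Z}\mathcal{S}\subseteq\mathbb{Z}^{t}$ the double annihilator is $L$ itself, so $H\subseteq\prod_{i}\mu_{d/\gcd(d,k_{i})}$ holds if and only if $\frac{d}{\gcd(d,k_{i})}e_{i}\in L$ for every $i$.

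The gap is in the last step, and it is not merely ``technically heavy.'' First, the Nullstellensatz computation does not produce elements of $L$: writing $x_{i}^{N_{i}}=\sum_{j}g_{j}\partial_{j}W$ and comparing exponents only gives relations of the form $N_{i}e_{i}=m+J-e_{j}$ with $m$ an exponent of $g_{j}$ and $J\in\mathcal{S}$, and neither $m$ nor $e_{j}$ lies in $L$, so no integer combination of vectors of $\mathcal{S}$ is extracted. Second, and decisively, the lattice membership you are trying to prove fails in general, so the containment as literally stated is false. Take the loop polynomial $W=x^{3}y+xy^{3}$: it is nondegenerate ($q_{1}=q_{2}=1/4$ are forced, and $W_{x}=y(3x^{2}+y^{2})$, $W_{y}=x(x^{2}+3y^{2})$ vanish simultaneously only at the origin), with $k_{1}=k_{2}=1$, $d=4$, yet $H$ is cyclic of order $8$ and contains $(\zeta_{8},\zeta_{8}^{5})$ (check: $\zeta_{8}^{3}\cdot\zeta_{8}^{5}=1$ and $\zeta_{8}\cdot\zeta_{8}^{15}=1$), which does not lie in $\mu_{4}\times\mu_{4}$; equivalently $(4,0)\notin\mathbb{Z}(3,1)+\mathbb{Z}(1,3)$. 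Your proposed Kreuzer--Skarke alternative would hit the same wall precisely at the loop atoms. What does survive in general is finiteness together with the weaker statement that every element of $H$ has the form $(e^{2\pi i\Theta_{1}},\dots,e^{2\pi i\Theta_{t}})$ with $\Theta_{i}\in\mathbb{Q}$; the stated containment does hold under the extra hypothesis that $W$ contains a Fermat monomial $x_{i}^{d/k_{i}}$ for each $i$, in which case $\frac{d}{k_{i}}e_{i}\in\mathcal{S}$ directly and no Jacobian-ideal argument is needed.
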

Here finiteness of the group $H$ is necessary for later discussion.\\
\\
$\textbf{W-spin structures on smooth orbicurves}.$ Let $(\tilde{\Sigma},\textbf{z,m})$ be a smooth orbicurve (orbifold Riemann surface), i.e., $(\tilde{\Sigma},\textbf{z,m})$ is a Riemann surface $\Sigma$ with marked points $\textbf{z}=\{z_{i}\}$ having orbifold structure near each marked point $z_{i}$ given by a faithful action of $\mathbb{Z}/m_{i}$.\par In other words, a neighborhood of each marked point is uniformized by the branched covering map $z\rightarrow z^{m_{i}}$.\par
Let $\rho:\tilde{\Sigma}\rightarrow\Sigma$ be the natural projection to the coarse Riemann surface $\Sigma$.
A line bundle $L$ on $\Sigma$ can be uniquely lifted to an orbifold line bundle on $\tilde{\Sigma}$. We denote the lifted bundle by the same $L$.
\\
\\
$\textbf{Definition 1.2.}$  Let $K$ be the canonical bundle of $\Sigma$, and let \\
 \begin{equation}       K_{log}:=K\otimes \mathcal{O}(z_{1})\otimes\cdots \otimes \mathcal{O}(z_{k})  \end{equation}\\
be the $log$-$canonical$ $bundle$. The holomorphic sections are holomorphic 1-forms away from the special points $\{z_{i}\}$ and with at simple poles at the $z_{i}$. $K_{log}$ can be thought of as the canonical bundle of the punctured Riemann surface $\Sigma-\{z_{1},\cdots,z_{k}\}.$ Suppose that $L_{1},\cdots,L_{t}$ are orbifold line bundles on $\tilde{\Sigma}$ with isomorphisms
 $\varphi_{j}:W_{j}(L_{1},\cdots,L_{t})\widetilde{\rightarrow}K_{log}$,where by $W_{j}(L_{1},\cdots,L_{t})$ we mean the $j$th monomial of $W$ in $L_{i}$, \begin{equation}
       W_{j}(L_{1},\cdots,L_{t})=L^{\otimes b_{1j}}_{1}\otimes\cdots,\otimes L^{\otimes b_{tj}}_{t} . \end{equation}

Here $K_{log}$ is identified with its pull-back to $\tilde{\Sigma}$. \\
The tuple $(L_{1},\cdots,L_{t},\varphi_{1},\cdots,\varphi_{s})$ is called a $W$-$spin$ $structure.$\\
\\
$\textbf{Definition 1.3.}$ Suppose that the chart of $\tilde{\Sigma}$ at an orbifold point $z_{i}$ is $D/(\mathbb{Z}/m)$ with action $e^{\frac{2\pi i}{m}}(z)=e^{\frac{2\pi i}{m}}z$. Suppose that the local trivialization of an orbifold line bundle $L$ is $(D\times \mathbb{C})/(\mathbb{Z}/m)$ with the action  \begin{equation}
  e^{\frac{2\pi i}{m}}(z,w)=(e^{\frac{2\pi i}{m}}z,e^{\frac{2\pi i\nu}{m}}w). \end{equation}
When $\nu=0$, we say that $L$ is $Broad$ at $z_{i}$; when $\nu>0$, we say $L$ is $Narrow$ at $z_{i}$.\par
A $W$-spin structure $(L_{1},\cdots,L_{t},{\varphi}_{1},\cdots,{\varphi}_{s})$ is called $Broad$ at the point $z_{i}$ if the group element $h=(exp(2\pi i{\nu}_{1}/m),\cdots,exp(2\pi i{\nu}_{t}/m))$ defined by the orbifold action on the line bundles $L_{k}$ at $z_{i}$ acts trivially on all the line bundles occurring in the monomial $W_{j}$. In other words, the $W$-spin structure is Broad if there is a monomial $W_{j}=c_{j}\prod x^{b_{l,j}}_{l}$ in $W$ such that for every $l$ with $b_{i,j}>0$ the line bundle $L_{l}$ is Broad at $z_{i}$.\\
\\
$\textbf{Desingularization}$. If $L$ is an orbifold line bundle on a smooth orbifold Riemann surface $\tilde{\Sigma}$, then the sheaf of locally invariant holomorphic sections of $L$ is locally free of rank one, and hence dual to a unique orbifold line bundle $|L|$ on $\Sigma$. We also denote $|L|$ by $\rho_{\ast}L$, and it corresponds to the $desingularization$ [8] of $L$. It can be constructed as follows.\par
We keep the local trivialization at other places and change it at the orbifold point $z_{i}$ by a $\mathbb{Z}/m$-equivariant map $\Psi:(D-\{0\})\times \mathbb{C}\rightarrow(D-\{0\})\times \mathbb{C}$ by \\
     \begin{equation} (z,w)\rightarrow(z^{m},z^{-\nu}w), \end{equation}
where $\mathbb{Z}/m$ acts trivially on the second $(D-\{0\})\times \mathbb{C}$. Then, we extend $L_{((D-\{0\})\times \mathbb{C})}$ to a smooth holomorphic line bundle over $\Sigma$ by the second trivialization. Since $\mathbb{Z}/m$ acts trivially, this gives a line bundle over $\Sigma$, which is $|L|$. Note that if $L$ is Broad at $z_{i}$, then $|L|=L$ locally. When $L$ is Narrow at $z_{i}$, then $|L|$ differs from $L$.\\
\\
$\textbf{Smooth metric and cylindrical metric}$ \hspace{10pt} We fix a $W$-spin structure \begin{eqnarray*}(L_{1},\cdots,L_{t},\varphi_{1},\cdots,\varphi_{s})\end{eqnarray*}
For each monomial $W_{i}$, let \begin{eqnarray*}D=-\sum^{k}_{l=1}\sum^{t}_{j=1}b_{ij}(a_{j}(h_{l})-q_{j})z_{l} \end{eqnarray*} be a divisor, where $a_{j}(h_{l})$ is the orbifold action on the line bundle $L_{j}$ at the marked point $z_{l}$ in the expression (6), then there is a canonical meromorphic section $s_{0}$ with divisor $D$. This section provides the identification  \begin{equation}
    s^{-1}_{0}:K_{\Sigma}\otimes \mathcal{O}(D)\cong K_{\Sigma}(D), \end{equation}
where $K_{\Sigma}(D)$ is the sheaf of local, possibly meromorphic, sections of $K_{\Sigma}$ with zeros determined by $D$. When at least one of the line bundles occurring in the monomial $W_{i}$ is Narrow at $z_{l}$, then $D$ is not effective. So the local section of $K_{\Sigma}(D)$ has zeros, and hence is a natural sub-sheaf of $K_{\Sigma}$. In general, however, it is a sub-sheaf of $K_{log}$. For each marked point, there is a canonical local section $\frac{dz}{z}$ of $K_{log}$. Using the isomorphism $\varphi_{i}$, there is a local section $t_{i}$ of $L_{i}$ with the property $W_{i}(t_{1},\cdots,t_{k})=\frac{dz}{z}.$ The choice of $t_{i}$ is unique up to the action of the group $H$ defined in Lemma 1.1. \par
We choose a metric on $K_{log}$ with the property $|\frac{dz}{z}|=\frac{1}{|z|}$.
It induces a unique metric on $L_{i}$, with property $|t_{i}|=|z|^{-q_{i}}$. Using the correspondence between $L_{i}$ and $|L_{i}|$, it induces a metric on $|L_{i}|$ with the behavior $|e_{i}|=|z|^{a_{i}(h)-q_{i}}$ near a marked point, where $e_{i}$ is the corresponding local section of $|L_{i}|$. This metric on $|L_{i}|$ is called $smooth$ $metric$. In particular, it is a singular metric  where $L$ is Broad ($a_{i}(h)=0$) at some marked point. \par
If we choose a metric on $K_{log}$ with the property $|\frac{dz}{z}|=1$.
 Using the correspondence between $L_{i}$ and $|L_{i}|$, it induces a metric on $|L_{i}|$ with the behavior $|e_{i}|=|z|^{a_{i}(h)}$ near a marked point, where $e_{i}$ is the corresponding local section of $|L_{i}|$. This metric on $|L_{i}|$ is called $cylindrical$ $metric$.\par
Let $(\Omega,z_{1},\cdots,z_{k})$ be an obicurve with k marked points, and $B_{1}(z_{l})$ be the unit closed disc with the center $z_{l}$. Choose a compact subset $\Omega\subset\Sigma\backslash\cup_{l=1}^{k} B_{e^{-1}}(z_{l})$ such that $\{\Sigma,B_{1}(z_{1}),\cdots,B_{1}(z_{k})\}$ can cover $\Sigma$. Let $\varphi_{0},\cdots,\varphi_{k}$ be a set of partition functions subordinate to the cover. Let $e_{j}$ be basis of orbifold line bundle $L_{j}$ on $\Sigma$, the smooth metric is defined above: $|e_{j}|=|z|^{a_{j}(h_{l})-q_{j}}$. \par
Let the section of $L_{j}$ on $B_{1}(z_{1})$ be $u_{j}=\tilde{u}_{j}e_{j}$, we can define norms of $L^{p},L_{1}^{p}$ as follows: \begin{eqnarray*}
||u_{j}||_{p;B_{1}(z_{l})}&=&({\int_{B_{1}(z_{l})}|\tilde{u}_{j}|^{p}|{e_{j}}|^{p}|dzd\bar{z}|})^{1/p} \\ ||u_{j}||_{1,p;B_{1}(z_{l})}&=&({\int_{B_{1}(z_{l})}(|\tilde{u}_{j}|^{p}+|\partial \tilde{u}_{j}|^{p}+|\bar\partial \tilde{u}_{j}|^{p})|{e_{j}}|^{p}|dzd\bar{z}|})^{1/p}  \end{eqnarray*}
\par In the inner part of $\Omega$ which is away from the marked points, the norm is defined by the standard Sobolev norm $||u_{j}||_{W_{k}^{p}(\Omega)}$.\par
 The global $L^{p},L_{1}^{p}$ norms are defined as : \begin{eqnarray*}
||u_{j}||_{p}=||\varphi_{0}u_{j}||_{W_{0}^{p}(\Omega)}+\Sigma_{l=1}^{k}||\varphi_{l}u_{j}||_{p;B_{1}(z_{l})},\\
||u_{j}||_{1,p}=||\varphi_{0}u_{j}||_{W_{1}^{p}(\Omega)}+\Sigma_{l=1}^{k}||\varphi_{l}u_{j}||_{1,p;B_{1}(z_{l})}, \end{eqnarray*} \par
The weighted Sobolev space $L_{1}^{p}(\Sigma,|L_{j}|)$ is defined as the closure of $C_{0}^{\infty}(\Sigma\setminus(z_{1},\cdots,z_{k}),|L_{j}|)$ under the norm $||\cdot||_{1,p}$, and $L^{p}(\Sigma,|L_{j}|\otimes\wedge^{0,1})$ is defined as closure of $C_{0}^{\infty}(\Sigma\setminus(z_{1},\cdots,z_{k}),|L_{j}|\otimes\wedge^{0,1})$ under norm $||\cdot||_{p}$, where $\wedge^{0,1}$ is the (0,1)-form space of the Riemann surface $\Sigma$. We can define the Cauchy-Riemann operator $\bar \partial:L_{1}^{p}(\Sigma,|L_{j}|)\rightarrow L^{p}(\Sigma,|L_{j}|\otimes\wedge^{0,1})$,  \par
For convenience, we make a coordinate transformation $z=e^{-t-i\theta}$, hence the punctured neighbourhood  $B_{1}(z_{l})\setminus \{z_{l}\}$ is transformed to an end :$S^{1}\times [0,\infty)$.\\
Likewise, the operator $\bar \partial$ is changed as ${\bar \partial}^{t,\theta}$, and their relation is as follows: \begin{eqnarray*}
\bar \partial=-\frac{1}{2}e^{t-i\theta}(\partial_{t}+i\partial_{\theta})=-e^{t-i\theta}{\bar \partial}^{t,\theta}  \end{eqnarray*}
The original problem is transformed as follows: \begin{eqnarray*}
{\bar \partial}^{t,\theta}:\hat{W}_{1,1+k_{j,l}}^{p}\rightarrow W_{0,1+k_{j,l}}^{p}  \end{eqnarray*}
where $k_{j,l}=-a_{j}(h_{l})+q_{j}-2/p$, and the spaces $\hat{W}_{1,1+k_{j,l}}^{p},W_{0,1+k_{j,l}}^{p}$ are defined as the closure of smooth section space $\Gamma(|L_{j}|_{B_{1}(z_{l})})$ under the norms $||\cdot||_{\hat{W}_{1,1+k_{j,l}}^{p}}, ||\cdot||_{{W}_{0,1+k_{j,l}}^{p}}$ respectively:
 \begin{eqnarray}
||u_{j}||_{\hat{W}_{1,1+k_{j,l}}^{p}}&=&(\int_{S^{1}\times [0,\infty)}|\tilde{u}_{j}|^{p}e^{k_{j,l}pt}+(|\partial\tilde{u}_{j}|^{p}+|\bar \partial\tilde{u}_{j}|^{p})e^{(1+k_{j,l})pt})^{1/p}\\
||u_{j}||_{{W}_{0,1+k_{j,l}}^{p}}&=&(\int_{S^{1}\times [0,\infty)}|\tilde{u}_{j}|^{p}e^{(1+k_{j,l})pt})^{1/p}  \end{eqnarray} \par
[10] has studied the index theory of $\bar \partial$ in case of the smooth metric, especially they got an index theorem as follows:\\
\begin{theorem}([10]) In case of the smooth metric, if $1<p<\frac{2}{q_{j}}$ and $a_{j}(h_{l})-q_{j}+\frac{2}{p}\neq1,2$ for any $l (l=1,\cdots,k)$, then $\bar{\partial}:L_{1}^{p}(\Sigma, |L_{j}|)\rightarrow L^{p}(\Sigma, |L_{j}|\otimes \wedge^{0,1})$ is a Fredholm operator.\par
In particular, if $2<p<\frac{2}{1-\bar{\delta_{j}}}$ we have the index relation  \begin{eqnarray*}
&&\verb"ind"(\bar{\partial}: L_{1}^{p}(\Sigma, |L_{j}|)\rightarrow L^{p}(\Sigma, |L_{j}|\otimes \wedge^{0,1}))\\&=&\verb"ind"(\bar{\partial}^{t,\theta}: W_{1,1+\kappa}^{p}\rightarrow W_{0,1+\kappa}^{p})+\sharp\{z_{l}: c_{jl}<0\} \end{eqnarray*}
and the index is independent of $p\in (2,\frac{2}{1-\bar{\delta_{j}}}$). Where $\bar{\delta_{j}}=min_{l:c_{jl}>0}(c_{jl})$, $c_{jl}=a_{j}(h_{l})-q_{j}$. \end{theorem}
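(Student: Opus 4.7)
The strategy is to localise the problem using the partition of unity $\{\varphi_l\}_{l=0}^{k}$ already fixed: one splits the analysis into the compact interior $\Omega$ and the punctured neighbourhoods $B_{1}(z_l)\setminus\{z_l\}$. On $\Omega$ the operator $\bar\partial$ is elliptic and first-order on a compact surface with boundary, hence Fredholm by standard elliptic theory; every singular phenomenon is concentrated at the marked points, so the task reduces to proving a local Fredholm estimate near each $z_l$ and gluing to the interior estimate by a standard parametrix argument.

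For the local model, I would introduce the cylindrical coordinate $z = e^{-t-i\theta}$ that identifies $B_{1}(z_l)\setminus\{z_l\}$ with $S^{1}\times[0,\infty)$. A direct substitution confirms that the smooth-metric $L^p$ norm with $|e_{j}|=|z|^{a_{j}(h_l)-q_j}$ transforms into the weighted cylindrical norm of weight $k_{j,l}=-a_{j}(h_l)+q_{j}-2/p$ and that $\bar\partial$ is related to the translation-invariant operator $\bar\partial^{t,\theta}$ exactly as displayed in the excerpt. Then I would invoke the Lockhart--McOwen type Fredholm theory for translation-invariant Cauchy--Riemann operators on half-cylinders: the map $\bar\partial^{t,\theta}\colon \hat W^{p}_{1,1+k_{j,l}} \to W^{p}_{0,1+k_{j,l}}$ is Fredholm precisely when the weight $1+k_{j,l}$ avoids the discrete set of indicial roots of the asymptotic operator $i\partial_\theta$ acting on sections of $|L_j|$ over the orbifold circle. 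After accounting for the orbifold action at $z_l$, these indicial roots sit at $c_{jl}+\mathbb{Z}$, and the hypothesis $c_{jl}+2/p \neq 1,2$ is exactly what keeps the weight off this lattice. The range $1<p<2/q_j$ is the integrability condition that ensures the local model section $e_j$ and its $\bar\partial$-image lie in the appropriate $L^p$ spaces, so that the whole setup is well-defined and the gluing can be carried out.

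For the index formula I would use an excision comparison: the index on $\Sigma$ with the smooth metric differs from the sum of the local cylindrical model indices by a finite correction counting how many indicial modes belong to the smooth-metric space but not to the cylindrical one. When $p>2$, Sobolev embedding makes $L^{p}_{1}$-sections continuous at the punctures, and at each marked point with $c_{jl}<0$ the frame $e_j$ blows up but slowly enough to remain $L^{p}$, so there is exactly one extra holomorphic ``zero mode'' permitted in the smooth-metric kernel that has no counterpart on the cylindrical side. Summing these contributions over the marked points yields the correction $\sharp\{z_l:c_{jl}<0\}$. The upper bound $p<2/(1-\bar\delta_{j})$ prevents any further mode from crossing a critical weight, which both underlies the mode-count and gives the $p$-independence of the index within the stated range, since between consecutive critical weights the Fredholm index is locally constant.

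The main obstacle will be the precise identification of the indicial spectrum in the orbifold-cylindrical model together with the mode-counting that matches the excision correction exactly to $\sharp\{z_l:c_{jl}<0\}$. One must track how the orbifold action shifts the admissible Fourier modes at each puncture, verify that the only modes contributing to the difference lie in the strip $0<c_{jl}+2/p<1$, and confirm a bijective correspondence between these modes and the punctures satisfying $c_{jl}<0$. Once this bookkeeping is settled, the remainder is routine weighted elliptic theory.
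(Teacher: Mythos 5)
First, note that the statement you are proving is Theorem 1.2, which this paper quotes from [10] (Fan--Jarvis--Ruan) and does not reprove; the closest thing to an in-house proof is Section 3, where the author proves the cylindrical-metric analogue (Theorem 3.1) "step by step as [10]", and that section reveals the method of [10]: one constructs an explicit local special-solution operator $Q_s$ and proves hands-on estimates for $u_s=Q_s\circ f$ and for homogeneous solutions (Lemmas 4.3--4.8 of [10], mirrored here as Lemmas 3.2--3.7), assembles these into a global a priori estimate $\|u\|_{1,p}\le C(\|u\|_{L^p(\Sigma\setminus\cup B_{1/2}(z_l))}+\|\bar\partial u\|_p)$, and deduces Fredholmness from that estimate plus compactness; the index relation then comes from directly comparing the space $L^p_1(\Sigma,|L_j|)$ (which in cylindrical coordinates is the anisotropically weighted space $\hat W^p_{1,1+\kappa}$) with the standard space $W^p_{1,1+\kappa}$. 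Your route --- interior ellipticity, cylindrical coordinates, Lockhart--McOwen indicial-root theory on the half-cylinder, and an excision/mode-count for the correction term --- is a genuinely different and in principle viable organization, and your identification of the critical weights $c_{jl}+\mathbb{Z}$ and of the range $2<p<2/(1-\bar\delta_j)$ as the interval between consecutive critical weights matches the content of the theorem.

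The one point where your plan would fail as literally written is the step "invoke the Lockhart--McOwen type Fredholm theory for $\bar\partial^{t,\theta}\colon \hat W^{p}_{1,1+k_{j,l}}\to W^{p}_{0,1+k_{j,l}}$." The domain $\hat W^{p}_{1,1+k_{j,l}}$ carries the weight $e^{k_{j,l}pt}$ on the zeroth-order term but $e^{(1+k_{j,l})pt}$ on the derivatives (see (9)--(10)), so it is \emph{not} a standard weighted Sobolev space and the translation-invariant Fredholm theory does not apply to it directly; the paper itself flags exactly this obstruction at the start of Section 3.1. The entire correction term $\sharp\{z_l:c_{jl}<0\}$ is the price of passing from $\hat W^{p}_{1,1+\kappa}$ to $W^{p}_{1,1+\kappa}$, and establishing that passage is where the bespoke estimates of [10] (in particular the containment $W^p_{0,1+k}\subset W^2_0$ when $0<1+k_{j,l}<1$, i.e.\ precisely when $c_{jl}<0$ and $2<p<2/q_j$) are actually used. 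So you should reorder your argument: first prove the finite-codimension comparison between the hatted and unhatted spaces at each puncture with $c_{jl}<0$ (your "extra zero mode" count, with care about real versus complex dimension), and only then apply the standard weighted theory to the unhatted operator. With that repair your outline reproduces the theorem.
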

For convenience, we call this theorem as the index transformation theorem. Using this theorem, concrete index computation of $\bar \partial$ is transformed to compute $\verb"ind"(\bar{\partial}^{t,\theta}: W_{1,1+\kappa}^{p}\rightarrow W_{0,1+\kappa}^{p})$. \par
Now we can state our first main theorem as follows:
\begin{theorem} In case of the smooth metric, we can compute the index of $\bar \partial$ as follows:
\begin{eqnarray*}
&&\verb"ind"(\bar{\partial}: L_{1}^{p}(\Sigma, |L_{j}|)\rightarrow L^{p}(\Sigma, |L_{j}|\otimes \wedge^{0,1}))\\
&=&k+(1-2q_{j})(2-2g-k)-2\sum_{l=1}^{k} a_{j}(h_{l})+\sharp\{z_{l}:c_{jl}<0\}
\end{eqnarray*}
where $c_{jl}=a_{j}(h_{l})-q_{j}$, $q_{j}$ is the fractional degree of a nondegenerate quasi-homogeneous polynomial $W$ with respect to the jth variable, $a_{j}(h_{l})=v_{j,l}/m_{l}$  is the orbifold action on the line bundle $L_{j}$ at the marked point $z_{l}$. \par
And the right side is actually an integer, i.e.
\begin{eqnarray*}k+(1-2q_{j})(2-2g-k)-2\sum_{l=1}^{k} a_{j}(h_{l})
+\sharp\{z_{l}:c_{jl}<0\}\in \mathbb{Z} \end{eqnarray*}  \end{theorem}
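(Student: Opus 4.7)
The plan is to first apply Theorem 1.1 (the index transformation theorem) to reduce the computation to $\mathrm{ind}(\bar\partial^{t,\theta})$ on the cylindrical model plus the correction term $\#\{z_l : c_{jl} < 0\}$, and then to express the cylindrical index in terms of a standard Riemann--Roch computation on the compact smooth Riemann surface $\Sigma$ using the desingularized bundle $|L_j|$.

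The core step is to identify $\mathrm{ind}(\bar\partial^{t,\theta}: W_{1,1+\kappa}^p \to W_{0,1+\kappa}^p)$ with the real Fredholm index of $\bar\partial$ acting on sections of $|L_j|$ over the closed surface $\Sigma$ with an ordinary (non-singular) metric. Under the coordinate change $z = e^{-t-i\theta}$ and the desingularization map $(z,w)\mapsto(z^m,z^{-\nu}w)$, the weighted Sobolev spaces on each cylindrical end correspond to the standard $L^p_1$ and $L^p$ spaces for $|L_j|$ in a neighborhood of the marked point $z_l$; combined with the interior contribution on $\Omega\subset\Sigma$ (which is already standard), this yields
\begin{equation*}
\mathrm{ind}(\bar\partial^{t,\theta}: W_{1,1+\kappa}^p \to W_{0,1+\kappa}^p) \;=\; 2\deg(|L_j|) + 2 - 2g,
\end{equation*}
since Riemann--Roch on the compact Riemann surface $\Sigma$ gives the complex index $\deg(|L_j|)+1-g$, and the real index is twice the complex one.

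To compute $\deg(|L_j|)$, I use the $W$-spin structure: each isomorphism $W_j(L_1,\ldots,L_t)\cong K_{\log}$ yields a linear constraint $\sum_i b_{ij}\deg_{\mathrm{orb}}(L_i) = \deg(K_{\log}) = 2g-2+k$ on the orbifold degrees. Combined with the quasi-homogeneity identity $\sum_i q_i b_{ij} = 1$ and the nondegeneracy of $W$ (which ensures the exponent matrix has full column rank, so these constraints admit a unique solution), we deduce $\deg_{\mathrm{orb}}(L_j) = q_j(2g-2+k)$. The desingularization then shifts the degree by $-\sum_l a_j(h_l)$, giving $\deg(|L_j|) = q_j(2g-2+k) - \sum_l a_j(h_l)$. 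Substituting back and using the algebraic identity $k + (1-2q_j)(2-2g-k) = 2 - 2g + 2q_j(2g-2+k)$ produces the claimed index formula after adding the correction term from Theorem 1.1.

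Integrality is then automatic from either perspective: the Fredholm index on the left is an integer by definition, and equivalently $|L_j|$ is a genuine holomorphic line bundle on a smooth compact Riemann surface so $\deg(|L_j|)\in\mathbb{Z}$, while $\#\{z_l:c_{jl}<0\}$ is visibly an integer. I expect the main technical obstacle to be the identification step: precisely matching the weighted-space exponent $k_{j,l} = -a_j(h_l) + q_j - 2/p$ with the standard $L^p$ integrability condition for $|L_j|$ near $z_l$, tracking both the Jacobian $|dz\,d\bar z| = 2e^{-2t}\,dt\,d\theta$ from the cylindrical change of variables and the way the desingularization map $(z,w)\mapsto(z^m,z^{-\nu}w)$ converts the orbifold section data into smooth section data. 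Once this identification is in place, the remaining steps are Riemann--Roch and elementary algebra.
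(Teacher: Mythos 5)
Your overall architecture is sound and your final algebra checks out: reducing via the index transformation theorem (Theorem 1.2), the identity $k+(1-2q_j)(2-2g-k)=2-2g+2q_j(2g-2+k)$, and the degree formula $\deg(|L_j|)=q_j(2g-2+k)-\sum_l a_j(h_l)$ (which is exactly Theorem 2.3, quoted from [11]) are all correct, and your claimed intermediate identity $\verb"ind"(\bar\partial^{t,\theta}:W^p_{1,1+\kappa}\to W^p_{0,1+\kappa})=2\deg(|L_j|)+2-2g$ is indeed equivalent to what the paper proves. However, the justification you offer for that identity --- that the weighted spaces on the cylindrical ends ``correspond to the standard $L^p_1$ and $L^p$ spaces for $|L_j|$'' --- is precisely the step that cannot be waved through, and you yourself flag it as the main obstacle without resolving it. The index of $\bar\partial^{t,\theta}=\partial_t+i\partial_\theta$ on weighted Sobolev spaces over a surface with cylindrical ends is \emph{not} determined by the bundle alone: by the Lockhart--McOwen jumping formula (Theorem 2.8), it changes by $2$ each time a weight $1+\kappa_{j,l}$ crosses an eigenvalue of $L=i\partial_\theta$ (i.e.\ an integer). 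Here $\kappa_{j,l}=-c_{jl}-2/p$ with $c_{jl}=a_j(h_l)-q_j$ taking different signs at different marked points, so the weights sit in different spectral windows at different ends, and one must actually locate each window and show that the total contribution reproduces the closed-surface Riemann--Roch number. Your proposal contains no mechanism for doing this.

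The paper fills exactly this gap by a decomposition argument (Theorem 2.1): the cylindrical index splits into an interior piece on $\Sigma\setminus\cup_l B_1(z_l)$ with the totally real boundary conditions $\Psi_l(e^{i\theta})\mathbb{R}=e^{iv_{j,l}\theta}\mathbb{R}$, computed via McDuff--Salamon's boundary Riemann--Roch theorem as $(2-2g-k)+\mu(E,F)$ with $\mu(E,F)=2\deg(|L_j|)+2\sum_l v_{j,l}$, plus one local piece per end, each computed as $1-2v_{j,l}$ by combining the $L^p$ gluing theorem (Theorem 2.5/2.6) with the jumping formula; the winding numbers $v_{j,l}$ then cancel in the sum, leaving $2\deg(|L_j|)+2-2g$. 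So the identity you want is true, but proving it requires either this boundary-condition/gluing machinery or an equivalent analysis of which meromorphic local behaviours at each $z_l$ are admitted by the weight $1+\kappa_{j,l}$; simply matching norms under the coordinate change $z=e^{-t-i\theta}$ does not suffice, because the kernel and cokernel of the weighted problem are spaces of sections with prescribed pole/zero orders at the $z_l$ that need not coincide with $H^0(\Sigma,|L_j|)$ and $H^1(\Sigma,|L_j|)$. The integrality claim, on the other hand, is fine as you state it.
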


As for the cylindrical metric, we can similarly define the operator
\begin{eqnarray*} \bar{\partial}:L_{1}^{p}(\Sigma, |L_{j}|)\rightarrow L^{p}(\Sigma, |L_{j}|\otimes \wedge^{0,1})\end{eqnarray*}
only by replacing the smooth metric $|e_{j}|=|z|^{a_{j}(h_{l})-q_{j}}$ by the cylindrical metric $|e_{j}|=|z|^{a_{j}(h_{l})}$.\par
In [12], the authors introduced the linearized operator $D$ of the Witten map, that is:\begin{eqnarray*}
D=D_{\wp,\mu}WI:{L_{1}}^{p}(\Sigma,L_{1}\times L_{2}\times \cdots \times L_{N})\rightarrow L^{p}(\Sigma,L_{1}\otimes\wedge^{0,1})\times\cdots L^{p}(\Sigma,L_{N}\otimes\wedge^{0,1}) \end{eqnarray*}
Then they proved this operator is a Fredholm operator under some mild conditions in case of the cylindrical metric and computed its index.
If we add weight $\delta$ to Sobolev spaces above, and consider the following operator:\begin{eqnarray*}
D^{\delta}=D_{\wp,\mu}WI:{L_{1}}^{p,\delta}(\Sigma,L_{1}\times L_{2}\times \cdots \times L_{N})\rightarrow L^{p,\delta}(\Sigma,L_{1}\otimes\wedge^{0,1})\times\cdots L^{p,\delta}(\Sigma,L_{N}\otimes\wedge^{0,1}) \end{eqnarray*}
we can ask what is the relation between index$(D^{\delta})$ and $\delta$ ? \par
Our second main theorem can totally solve this problem. Let's state it as follows:
\begin{theorem} Assuming that $D^{\delta}$, $D^{\delta'}$ stand for the linearized operator $D$ of the Witten map with weights $\delta, \delta'$ respectively. In case of the cylindric metric, we have the index jumping formula
\begin{eqnarray*}
\verb"ind"(D^{\delta})-\verb"ind"(D^{\delta'})&=&\Sigma^{N}_{j=1}\Sigma^{k}_{l=1}([\delta_{j,l}]-[\delta'_{j,l}]).
\end{eqnarray*}
where $\delta=(\delta_{jl})\in \mathbb{R}^{N\times k},\delta'=(\delta'_{jl})\in \mathbb{R}^{N\times k}$ are weight matrixes, $N$ stands for the number of variable in a nondegenerate quasi-homogeneous polynomial $W$, $k$ stands for the number of marked points.
\end{theorem}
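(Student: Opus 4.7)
The plan is to reduce the weighted index jump to a sum of local contributions at the cylindrical ends, then on each end decouple the linearized Witten operator into its line-bundle components, and finally apply the standard weight-crossing formula for a Cauchy--Riemann operator on $S^{1}\times[0,\infty)$.

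First I would use excision. Since $\delta$ and $\delta'$ affect only the norms on the cylindrical ends $B_{1}(z_{l})\setminus\{z_{l}\}\cong S^{1}\times[0,\infty)$ and agree on the compact part $\Omega\subset\Sigma\setminus\bigcup_{l}B_{e^{-1}}(z_{l})$, a standard parametrix argument for Fredholm operators on manifolds with cylindrical ends gives
\begin{eqnarray*}
\verb"ind"(D^{\delta})-\verb"ind"(D^{\delta'})=\sum_{l=1}^{k}\bigl(\verb"ind"(D^{\delta}_{l})-\verb"ind"(D^{\delta'}_{l})\bigr),
\end{eqnarray*}
where $D^{\delta}_{l}$ denotes the model operator associated to $D$ on the $l$-th cylindrical end with weight column $(\delta_{j,l})_{j=1,\ldots,N}$.

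Second, I would show that on each end the linearization $D_{\wp,\mu}WI$ decouples, up to a compact perturbation, into the diagonal operator $\bigoplus_{j=1}^{N}\bar{\partial}^{t,\theta}$ acting on the separate components $|L_{j}|$. This is because the Witten section $\wp$ tends to a critical point of $W$ as $t\to\infty$, so that the cross-terms of the form $\partial_{i}\partial_{j}W(\wp)$ decay exponentially in $t$ and define compact maps between the relevant weighted Sobolev spaces. Granted this, the index on each end becomes additive in $j$:
\begin{eqnarray*}
\verb"ind"(D^{\delta}_{l})-\verb"ind"(D^{\delta'}_{l})=\sum_{j=1}^{N}\bigl(\verb"ind"(\bar{\partial}^{t,\theta}_{\delta_{j,l}})-\verb"ind"(\bar{\partial}^{t,\theta}_{\delta'_{j,l}})\bigr).
\end{eqnarray*}

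Finally, for each pair $(j,l)$ I would invoke the weight-jumping formula for $\bar{\partial}^{t,\theta}$ on $S^{1}\times[0,\infty)$. Expanding sections into Fourier modes on $S^{1}$, the associated asymptotic operator $-\frac{1}{2}(\partial_{t}+i\partial_{\theta})$ has spectrum equal to a shifted integer lattice; by Lockhart--McOwen type analysis, the Fredholm index of the weighted operator drops by exactly one each time the weight is raised across an integer spectral value, and rises by one when lowered across one. Counting the integer crossings between $\delta_{j,l}$ and $\delta'_{j,l}$ produces precisely $[\delta'_{j,l}]-[\delta_{j,l}]$, and summing over $j$ and $l$ yields the theorem. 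The main obstacle will be step two: rigorously verifying that the off-diagonal derivative terms of $D_{\wp,\mu}WI$ act as compact perturbations in the weighted norms with both weights $\delta$ and $\delta'$, which requires a quantitative control of the decay rate of $\wp$ at the punctures together with a careful commutation with the weight exponentials.
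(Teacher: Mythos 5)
Your overall architecture --- localize the weight dependence to the cylindrical ends, decouple into the $N$ line-bundle components, and then apply Lockhart--McOwen spectral counting to each $\bar{\partial}^{t,\theta}$ --- is close in spirit to the paper, and your final step essentially coincides with the paper's Theorems 3.10 and 3.12. The genuine gap is your second step. You assert that the off-diagonal terms of $D_{\wp,\mu}WI$ (those involving second derivatives of $W$ evaluated at $\wp$) are compact perturbations in the weighted norms because $\wp$ decays at the punctures. This is not true in general: at a \emph{broad} marked point, where some $a_{j}(h_{l})=0$, the relevant sections need not vanish at $z_{l}$ and the perturbation fails to be compact. This is precisely why the index relation established in [12], which the paper quotes as equation (43),
\begin{eqnarray*}
\verb"ind"(D^{\delta})=\sum_{j=1}^{N}\verb"ind"({\bar \partial}^{orb,\delta}_{j})-\sum^{k}_{l=1}\sum_{j:a_{j}(h_{l})=0}1,
\end{eqnarray*}
carries a nontrivial correction term rather than being a clean sum of $\bar{\partial}$-indices; if the cross-terms were compact as you claim, that correction would be absent. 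Your argument as written would therefore establish a decoupling that is false on the nose.

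The paper sidesteps this issue entirely: it invokes (43) for each weight, observes that the correction term $-\sum_{l}\sum_{j:a_{j}(h_{l})=0}1$ is independent of $\delta$ and hence cancels in the difference $\verb"ind"(D^{\delta})-\verb"ind"(D^{\delta'})$, passes from the orbifold bundles $L_{j}$ to their desingularizations $|L_{j}|$ (Theorem 3.13, via Chen--Ruan), and then reduces to the single-line-bundle jumping formula through the index transformation theorem with weights (Theorem 3.9), the index decomposition over the ends (Theorem 3.8), and Lockhart--McOwen (Theorem 3.10). To repair your proof you would either have to reprove a weighted version of (43) --- which is exactly the analytic content you defer as ``the main obstacle'' --- or at least show that whatever non-compact contribution the broad points make to the index is identical for the weights $\delta$ and $\delta'$, so that it cancels in the difference. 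As it stands, the claimed compactness of the cross-terms is the missing, and as stated incorrect, ingredient.
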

\par
The paper is organized as follows. In section 2, we will first review Riemann-Roch theorem with boundary, Donaldson index theory and Lockhat-McOwen theory, then as an application of their work we prove Theorem 1.3. In section 3, we will prove Theorem 1.4 by generalizing Theorem 1.2.
\section{Index computation in case of the smooth metric}

\hspace{14pt} In this section, we will prove Theorem 1.3, which is equivalent to compute
\begin{equation}\verb"ind"(\bar{\partial}: L_{1}^{p}(\Sigma, |L_{j}|)\rightarrow L^{p}(\Sigma, |L_{j}|\otimes \wedge^{0,1}))\end{equation}
in case of the smooth metric. By Theorem 1.2, this index problem can be transformed to compute
\begin{equation}\verb"ind"(\bar{\partial}^{t,\theta}: W_{1,1+\kappa}^{p}\rightarrow W_{0,1+\kappa}^{p})\end{equation}. \par
 Let $e_{j}$ be a basis of orbifold line bundle $L_{j}$ on $\Sigma$, and recall that the smooth metric is defined: $|e_{j}|=|z|^{a_{j}(h_{l})-q_{j}}$, where $q_{j}$ is the fractional degree of a nondegenerate quasi-homogeneous polynomial $W$ with respect to the jth variable, $a_{j}(h_{l})$ is the orbifold action on the line bundle $L_{j}$ at the marked point $z_{l}$. \par
Note that $q_{j}$ is a nonnegative rational number, and $a_{j}(h_{l})=0$  when $L_{j}$ is Broad (see section 1) at $z_{l}$. So $a_{j}(h_{l})-q_{j}<0$ may happen. In that case the marked point $z_{l}$ is a singularity with respect to the smooth metric. Therefore, the smooth metric is a singular metric and we can not directly compute the index of the operator $\bar{\partial}: L_{1}^{p}(\Sigma, |L_{j}|)\rightarrow L^{p}(\Sigma, |L_{j}|\otimes \wedge^{0,1})$.

Let $z_{l}$ be a marked point, and consider the restriction of the bundle $|L_{j}|_{B_{1}(z_{l})}$ on the disc $B_{1}(z_{l})$. Assume that $B_{1}(z_{l})\times \mathbb{C}\rightarrow|L_{j}|_{B_{1}(z_{l})}:(z,w)\rightarrow\Psi_{l}(z)w$ is a trivialization such that $\Psi_{l}(e^{i\theta})\mathbb{R} $ forms a totally real bundle on $S^{1}_{l}=\partial B_{1}(z_{l})$. Define spaces \\
 \\
$\hat{W}^{p,B}_{1,1+\kappa_{j,l}}:=\{\tilde{u}_{j}\in\hat{W}_{1,1+k_{j,l}}^{p}|$ and $\tilde{u}_{j}(e^{i\theta})\in \Psi_{l}(e^{i\theta})\mathbb{R}\}$\\
$W^{p,B}_{1,1+\kappa_{j,l}}:=\{\tilde{u}_{j}\in W_{1,1+k_{j,l}}^{p}|$ and $\tilde{u}_{j}(e^{i\theta})\in \Psi_{l}(e^{i\theta})\mathbb{R}\}$ \\
\\
where $\hat{W}_{1,1+k_{j,l}}^{p}, W_{1,1+k_{j,l}}^{p}$ are defined as (9),(10), $k_{j,l}=-a_{j}(h_{l})+q_{j}-2/p$.\par
We can also define the space  \\
\\
$W^{p,B}_{1}(inn):=\{\tilde{u}\in W^{p}_{1}(\Sigma \setminus\cup_{l}B_{1}(z_{l}))|\tilde{u}(e^{i\theta})\in\Psi_{l}(e^{i\theta})\mathbb{R}$ for   $e^{i\theta}\in S^{1}_{l}\}$ \\
\\
where $S^{1}_{l}$ is the boundary of $B_{1}(z_{l})$. \par
Now consider local index problems
$\verb"ind"(\bar{\partial}^{t,\theta}:W^{p,B}_{1}(inn)\rightarrow W^{p}_{0}(inn))$, $\verb"ind"(\bar{\partial}^{t,\theta}:\hat{W}^{p,B}_{1,1+\kappa_{j,l}}\rightarrow W^{p}_{0,1+\kappa_{j,l}}), \verb"ind"(\bar{\partial}^{t,\theta}:W^{p,B}_{1,1+\kappa_{j,l}}\rightarrow W^{p}_{0,1+\kappa_{j,l}}), l=1,\cdots, k.$
[10] has got index decomposition theorems which relate local index problems above to $\verb"ind"(\bar{\partial}^{t,\theta}:\hat{W}^{p}_{1,1+\kappa}\rightarrow W^{p}_{0,1+\kappa})$, $\verb"ind"(\bar{\partial}^{t,\theta}:W^{p}_{1,1+\kappa}\rightarrow W^{p}_{0,1+\kappa})$ as follows:
\begin{theorem}([10])
In case of the smooth metric, we have \\
\\
$1)\verb"ind"(\bar{\partial}^{t,\theta}:\hat{W}^{p}_{1,1+\kappa}\rightarrow W^{p}_{0,1+\kappa})
\\=\verb"ind"(\bar{\partial}^{t,\theta}:W^{p,B}_{1}(inn)\rightarrow W^{p}_{0}(inn))+\sum^{k}_{l=1}\verb"ind"(\bar{\partial}^{t,\theta}:\hat{W}^{p,B}_{1,1+\kappa_{j,l}}\rightarrow W^{p}_{0,1+\kappa_{j,l}})$\\
\\
$2)\verb"ind"(\bar{\partial}^{t,\theta}:W^{p}_{1,1+\kappa}\rightarrow W^{p}_{0,1+\kappa})\\=\verb"ind"(\bar{\partial}^{t,\theta}:W^{p,B}_{1}(inn)\rightarrow W^{p}_{0}(inn))+\sum^{k}_{l=1}\verb"ind"(\bar{\partial}^{t,\theta}:W^{p,B}_{1,1+\kappa_{j,l}}\rightarrow W^{p}_{0,1+\kappa_{j,l}})$
\end{theorem}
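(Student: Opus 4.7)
My plan is to prove both decompositions (1) and (2) by the standard cut-and-paste argument for Fredholm indices, excising each orbifold disc $B_1(z_l)$ from $\Sigma$ and imposing the totally real Lagrangian boundary condition $\tilde u(e^{i\theta}) \in \Psi_l(e^{i\theta})\mathbb{R}$ along every separating circle $S^1_l = \partial B_1(z_l)$, so that each local piece is individually Fredholm.

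Using the partition of unity $\{\varphi_0, \varphi_1, \ldots, \varphi_k\}$ subordinate to the cover $\{\Sigma_{\mathrm{inn}}, B_1(z_1), \ldots, B_1(z_k)\}$ from Section~1, I would construct a bounded ``cutting'' map $C$ from the global space $\hat W^p_{1,1+\kappa}$ into the direct sum $W^{p,B}_1(\mathrm{inn}) \oplus \bigoplus_{l=1}^{k} \hat W^{p,B}_{1,1+\kappa_{j,l}}$, sending $u$ to (approximately) $(\varphi_0 u,\varphi_1 u,\ldots,\varphi_k u)$ after first correcting the trace of each $\varphi_l u$ on $S^1_l$ so that it lies in $\Psi_l(e^{i\theta})\mathbb{R}$. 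This Lagrangian correction is a bounded operation because $\mathbb{C} = \Psi_l(e^{i\theta})\mathbb{R}\oplus i\Psi_l(e^{i\theta})\mathbb{R}$ fiberwise and the weighted trace theorem applies uniformly away from the marked points. A reverse ``gluing'' map $G$ simply sums the local sections extended by zero. The key computation is that both $CG-\mathrm{id}$ and $GC-\mathrm{id}$ are compact: they are supported on the collars of $S^1_l$ and built from multiplication by $\bar\partial\varphi_l$ together with the Lagrangian correction, so compactness follows from the weighted Rellich--Kondrachov embedding in the non-critical weight range guaranteed by the Fredholm hypotheses of Theorem~1.2.

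From this it follows that the global $\bar\partial^{t,\theta}$ is a compact perturbation of the direct sum of local $\bar\partial^{t,\theta}$'s with totally real boundary conditions, so their Fredholm indices coincide, yielding decomposition~(1). Decomposition~(2) follows from the identical argument with $\hat W$ replaced by $W$ throughout, the distinction lying only in the weight exponent on the zeroth-order term of the norm.

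The main obstacle will be justifying the bounded Lagrangian correction rigorously in the weighted Sobolev setting, and verifying that the cutoff-commutator discrepancies are \emph{genuinely} compact (not merely bounded) in the relevant weight range $\kappa_{j,l} = -a_j(h_l)+q_j-2/p$, where the exponents may approach critical thresholds governed by the asymptotic indicial operator on the cylindrical end $S^1\times[0,\infty)$. This weight analysis parallels the one already underlying Theorem~1.2 of [10] and should present no new structural difficulty beyond careful bookkeeping.
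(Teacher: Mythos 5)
The paper does not prove this statement at all: Theorem~2.1 is quoted verbatim from [10] (Fan--Jarvis--Ruan, \emph{Geometry and analysis of spin equations}), so there is no in-paper argument to compare yours against. Your overall skeleton --- cut along the circles $S^{1}_{l}$, impose the totally real condition $\Psi_{l}(e^{i\theta})\mathbb{R}$ on both sides of each cut, and argue that the global index equals the sum of the local indices --- is the right geometric picture and is consistent with how such decompositions are proved in [10] and with the Maslov composition axiom (Theorem~4.1) that the paper later exploits.

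However, the central analytic step of your proposal is asserted rather than proved, and as stated it does not go through. First, the phrase ``the global $\bar\partial^{t,\theta}$ is a compact perturbation of the direct sum of local operators'' is not meaningful: the two operators act between different spaces, so what you actually need is a quadruple of bounded maps (cutting and gluing on both source and target) that intertwine the operators modulo compact errors and are mutually inverse modulo compact errors; only then does stability of the index apply. Second, and more seriously, the compactness of the source-level discrepancy fails for the Lagrangian correction term. Your correction $R_{l}$ sends $u$ to a bounded extension of the $i\Psi_{l}\mathbb{R}$-component of its trace on $S^{1}_{l}$; the trace map $W^{1,p}\to W^{1-1/p,p}(S^{1}_{l})$, the fiberwise projection, and any bounded extension $E:W^{1-1/p,p}(S^{1}_{l})\to W^{1,p}(\mathrm{collar})$ are each merely bounded, so $\bar\partial^{t,\theta}\circ R_{l}:W^{1,p}\to L^{p}$ loses a full derivative and is bounded but not compact --- the weighted Rellich--Kondrachov embedding cannot be applied to its image, which lies only in $L^{p}$. (Choosing $E$ to be the harmonic extension does not help: $\bar\partial$ of the harmonic extension of $g$ is as singular as $\partial_{\theta}g$ near $t=0$.) This term is precisely where the Fredholm content of the local Riemann--Hilbert problem enters, so it cannot be dismissed as bookkeeping. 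The standard repairs are either (i) to compare kernels and cokernels directly via restriction/gluing exact sequences, using elliptic regularity of kernel elements and the splitting $\mathbb{C}=\Psi_{l}\mathbb{R}\oplus i\Psi_{l}\mathbb{R}$ of boundary data so that the two copies of $\Psi_{l}\mathbb{R}$-data from the two sides cancel, or (ii) to stretch the neck and invoke an $L^{p}$ index gluing theorem of the type the paper itself develops in Section~2.2 and the Appendix (Theorems~2.5--2.7), which handles the matching across the cut at the level of the model operator $\frac{d}{dt}+L$ rather than through a trace correction.
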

This index decomposition theorem is our start of later index computation. By this theorem, to compute $\verb"ind"(\bar{\partial}^{t,\theta}: W_{1,1+\kappa}^{p}\rightarrow W_{0,1+\kappa}^{p})$ is equivalent to compute
 \begin{eqnarray*}\verb"ind"(\bar{\partial}^{t,\theta}:W^{p,B}_{1}(inn)\rightarrow W^{p}_{0}(inn)) \end{eqnarray*}
 and
 \begin{eqnarray*}\verb"ind"(W^{p,B}_{1,1+\kappa_{j,l}}\rightarrow W^{p}_{0,1+\kappa_{j,l}})\end{eqnarray*}.\\
$\textbf{Remark 2.1.}$  This theorem is in case of the smooth metric. In section 3, we will generalize this theorem in case of the cylindrical metric.

\subsection{Riemann-Roch theorem with boundary}
\hspace{14pt} First let's focus on the computation problem of \begin{eqnarray*} \verb"ind"(\bar{\partial}^{t,\theta}:W^{p,B}_{1}(inn)\rightarrow W^{p}_{0}(inn)) \end{eqnarray*}
McDuff and Salamon ([14]) have already studied similar problems and got the Riemann-Roch theorem with boundary.
As an application of their work, we will compute
\begin{eqnarray*} \verb"ind"(\bar{\partial}^{t,\theta}:W^{p,B}_{1}(inn)\rightarrow W^{p}_{0}(inn)) \end{eqnarray*}
\par
Note that orbifold structure of $L_{j}$ near marked point $z_{l}$ is given by: \begin{eqnarray}
e^{\frac{2\pi i}{m}}(z,m)=(e^{\frac{2\pi i}{m}}z,e^{2\pi i(a_{j}(h_{l}))}w)=(e^{\frac{2\pi i}{m}}z,e^{2\pi i(\frac{v_{j,l}}{m})}w)  \end{eqnarray}
According to the definition of $L_{j}$ , there is a natural boundary condition for $|L_{j}|$, that is: \begin{equation}
\Psi_{l}(e^{i\theta})\mathbb{R}=e^{iv_{j,l}\theta}\mathbb{R}  \end{equation} \\
$\textbf{Remark 2.2.}$ Note that we delete the factor $2\pi$ in above equality for complying with the framework of the appendix C in [14], but it will not influence our later computation.\par
Now we can apply [14]'s Riemann-Roch theorem  with boundary to our problem. We cite their theorem (Theorem C.1.10 of [14]) as follows:\\
\begin{theorem}[Riemann-Roch theorem with boundary]
Let $E\rightarrow \Sigma$ be a complex vector bundle on a compact Riemannian surface with boundary and $F\subset E|_{\partial\Sigma}$ be a real subbundle. Let $D$ be a real $Cauchy-Riemann$ operator on $E$ of class $W^{l-1,p}$, where $l$ is a positive integer and $p>1$ such  that $lp>2$. Then the following holds for every integer $k\in\{1,2,\cdots,l\}$ and every real number $q>1$ such that $k-\frac{2}{q}\leq l-\frac{2}{p}$.\\
(1)The operators \begin{eqnarray*}
D_{F}:W^{k,q}_{F}(\Sigma,E)\rightarrow W^{k-1,q}(\Sigma,\wedge^{0,1}T^{*}\Sigma\otimes E),\\
D_{F}^{*}:W^{k,q}_{F}(\Sigma,\wedge^{0,1}T^{*}\Sigma\otimes E)\rightarrow W^{k-1,q}(\Sigma,E) \end{eqnarray*}
are Fredholm operators.\\
(2)The real Fredholm index of $D_{F}$ is given by \begin{equation}
\verb"ind"(D_{F})=n\chi(\Sigma)+\mu(E,F)  \end{equation}
where $\chi(\Sigma)$ is the Euler chracteristic of $\Sigma$, $\mu(E,F)$ is the boundary Maslov index (see the Appendix), n is the complex rank of $E$.\par
\end{theorem}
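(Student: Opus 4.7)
The plan is to follow the classical doubling strategy that underlies Riemann--Roch theorems on surfaces with boundary. First I would establish Fredholmness by checking that the totally real boundary condition $u|_{\partial\Sigma} \in F$ is an elliptic (Shapiro--Lopatinski) boundary condition for the Cauchy--Riemann operator $D$. The verification reduces by local trivialization to the model on the upper half-plane with $F = \mathbb{R}^n \subset \mathbb{C}^n$, where the only $W^{k,q}$ solution of $\bar\partial u = 0$ with $\mathrm{Im}\, u|_{\mathbb{R}} = 0$ is $u \equiv 0$. Combined with interior elliptic estimates for operators of class $W^{l-1,p}$ (using $lp > 2$ to embed coefficients into $C^0$), this yields an a priori bound of the form $\|u\|_{W^{k,q}} \leq C(\|Du\|_{W^{k-1,q}} + \|u\|_{L^q})$. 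The standard argument then produces finite-dimensional kernel, closed range and, via the formal adjoint $D_F^{*}$, finite-dimensional cokernel, establishing Fredholmness in every pair $(k,q)$ compatible with the regularity of $D$.

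The index formula is obtained by homotopy invariance together with the Schottky doubling construction. Homotopy invariance of the Fredholm index lets me replace $D$ by the standard Dolbeault operator $\bar\partial$ for some holomorphic structure on $E$ adapted to $F$. I then form the double $\widehat\Sigma = \Sigma \cup_{\partial\Sigma} \bar\Sigma$ and glue $E$ to its complex conjugate $\bar E$ along $\partial\Sigma$ using $F$, producing a holomorphic bundle $\widehat E$ on the closed surface $\widehat\Sigma$, together with an antiholomorphic involution $\widehat\sigma$ covering the reflection of $\widehat\Sigma$. The topological accounting gives $\chi(\widehat\Sigma) = 2\chi(\Sigma)$, since boundary circles have zero Euler characteristic, and $\deg(\widehat E) = \mu(E, F)$, which is essentially the definition of the boundary Maslov index.

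By Schwarz reflection the operator $\bar\partial$ extends to $\widehat\partial$ on $\widehat\Sigma$, and the natural map $u \mapsto (u, \bar u \circ \widehat\sigma)$ identifies $\ker(D_F)$ as a real vector space with the subspace $(\ker \widehat\partial)^{\widehat\sigma}$ of sections fixed by the antilinear involution $\widehat\sigma$; the analogous identification for cokernels uses Serre duality on $\widehat\Sigma$. Since $\widehat\sigma$ is antilinear, its fixed set is a real form, so $\dim_{\mathbb{R}}(\ker \widehat\partial)^{\widehat\sigma} = \dim_{\mathbb{C}} \ker \widehat\partial$, and similarly for cokernels. Classical Riemann--Roch on $\widehat\Sigma$ then gives
\[
\mathrm{ind}(D_F) = \dim_{\mathbb{C}} H^0(\widehat\Sigma, \widehat E) - \dim_{\mathbb{C}} H^1(\widehat\Sigma, \widehat E) = \deg(\widehat E) + n(1 - \widehat g) = \mu(E,F) + n\chi(\Sigma),
\]
where $\widehat g$ is the genus of $\widehat\Sigma$ and $1 - \widehat g = \chi(\widehat\Sigma)/2 = \chi(\Sigma)$.

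The main obstacle I anticipate is the bookkeeping in the doubling step: one must verify that $\widehat\sigma$ preserves $\ker \widehat\partial$ and $\mathrm{coker}\,\widehat\partial$, that the identifications with real forms are compatible with Serre duality, and that the degree of $\widehat E$ really agrees with the boundary Maslov index as defined in the appendix. A secondary technicality is extending the argument from smooth coefficients to $D$ of class $W^{l-1,p}$; for this I would first prove the formula for smooth $D$ via the doubling argument and then approximate in the operator-norm topology, using that Fredholm operators form an open set and the index is locally constant.
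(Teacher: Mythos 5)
You should know at the outset that the paper does not prove this statement at all: it is quoted verbatim, with attribution, as Theorem C.1.10 of McDuff--Salamon [14], so there is no in-paper argument to measure your proposal against. Judged on its own terms, your outline is a correct and classical route to the result, but it is organized differently from the proof in [14] that the paper is implicitly relying on. McDuff--Salamon establish Fredholmness through boundary elliptic estimates much as you describe, but they obtain the index formula not by a single global Schottky double: they compute the index explicitly on the disc for the model boundary conditions $F_z=e^{ik\theta/2}\mathbb{R}$ (Fourier series), prove a linear gluing theorem for the index under decompositions of $(\Sigma,E,F)$, and then match this against the axiomatic characterization of $\mu(E,F)$ (the composition and normalization axioms recorded in the paper's Appendix as Theorem 4.1). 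Your doubling argument buys a shorter derivation of the index formula by outsourcing everything to classical Riemann--Roch on $\widehat\Sigma$, at the cost of the reflection bookkeeping you already flag; their decomposition argument buys local, checkable computations and meshes directly with the axioms of $\mu$ that the rest of this paper uses (e.g.\ in equations (17)--(21)). Two of your "bookkeeping" points deserve emphasis because they are theorems rather than definitions: $\deg(\widehat E)=\mu(E,F)$ must be derived from the composition axiom together with $\mu(\widehat E,\emptyset)=2\deg(\widehat E)$ and the symmetry $\mu(\bar E,\bar F)=\mu(E,F)$ of the reflected half; and the Schwarz reflection step requires that the glued holomorphic structure on $\widehat E$ be arranged so that sections satisfying $u|_{\partial\Sigma}\in F$ extend holomorphically, which is where the total reality of $F$ (not any constancy of $F$) enters. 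Neither point is a gap, but both need to be written out for the argument to be complete.
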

Note that $\Sigma \setminus \cup_{l=1}^{k}B_{1}(z_{l})$ is a Riemannian surface of genus g with boundary, $E=|L_{j}||_{\Sigma\setminus\cup_{l=1}^{k}B_{1}(z_{l})}$ is a complex vector bundle on $\Sigma\setminus\cup_{l=1}^{k}B_{1}(z_{l})$, $F=|L_{j}||_{\cup_{l=1}^{k}\partial (B_{1}(z_{l}))}$ is a real subbundle of $E$, and $\partial^{t,\theta}$ is a real Cauchy-Riemann operator on $E$.
By Riemann-Roch theorem with boundary above, we get\begin{eqnarray}
\verb"ind"(W^{p,B}_{1}(inn)\rightarrow W^{p}_{0}(inn))=\chi(\Sigma)+\mu(E,F)=(2-2g-k)+\mu(E,F)  \end{eqnarray} \par
Therefore, it suffices to compute the boundary Maslov index $\mu(E,F)$. \par
Let $\Sigma_{01}=\Sigma \setminus \cup_{l=1}^{k}B_{1}(z_{l})$, $\Sigma_{12}$ disjoint union of $k$ closed unit discs, $E_{01}=E, F_{01}=F$, $E_{12}, F_{12}$  bundles on $\Sigma_{12}$ with the same boundary conditions as $E, F$ respectively, $\Sigma_{02}$ closed Riemannian surface corresponding to $\Sigma_{01}$, $E_{02}$ extension of $E$ on $\Sigma_{02}$, $F_{02}=\emptyset$.\par
According to the definition of decomposition (see Definition 4.1, 4.2 in the Appendix), it is easy to check that ($\Sigma_{01},\Sigma_{12})$ is a decomposition of $\Sigma_{02}$, $(E_{01},F_{01})$, $(E_{12},F_{12})$ are a bundle pair decomposition of $(E_{02},F_{02})$.
So, by the composition axiom in Theorem 4.1, we have:
\begin{eqnarray}
\mu(E,F)=\mu(E_{02},\emptyset)- \mu(E_{12},F_{12}) \end{eqnarray}
By Theorem 4.2, \begin{eqnarray}
 \mu(E_{02},\emptyset)=2 c_{1}(E_{02})[\Sigma_{02}]=2 c_{1}(|L_{j}|)([\Sigma_{02}])=2deg(|L_{j}|) \end{eqnarray} \par
Therefore the original index problem can be translated to be a degree computation of the line bundle $|L_{j}|$. This has been done in [11].\\
\begin{theorem}([11]) We can compute the degree of $|L_{j}|$ as follows
\begin{eqnarray}
deg(|L_{j}|=q_{j}(2g-2+k)-\sum_{l=1}^{k} a_{j}(h_{l})\in \mathbb{Z} \end{eqnarray}\par
\end{theorem}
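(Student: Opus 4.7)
The plan is to derive the degree formula by exploiting the $W$-spin isomorphisms $\varphi_j\colon W_j(L_1,\dots,L_t)\xrightarrow{\sim} K_{\log}$ together with the quasi-homogeneity of $W$, and then convert from the orbifold degree of $L_j$ to the ordinary degree of its desingularization $|L_j|$. The first task is to recall that, for an orbifold line bundle $L$ on $\tilde\Sigma$ whose local action at $z_l$ is $(z,w)\mapsto(e^{2\pi i/m_l}z,e^{2\pi i a(h_l)}w)$, the desingularization formula (7) yields the elementary relation
\begin{equation*}
\deg_{\mathrm{orb}}(L)\;=\;\deg(|L|)+\sum_{l=1}^{k}a(h_l),
\end{equation*}
because the change of trivialization $(z,w)\mapsto(z^{m_l},z^{-\nu_l}w)$ shifts the local section of $|L|$ by a fractional pole of order $\nu_l/m_l=a(h_l)$ at $z_l$. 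In particular $\deg(|L_j|)=\deg_{\mathrm{orb}}(L_j)-\sum_l a_j(h_l)$, so it is enough to identify the orbifold degree of each $L_j$.

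Next I would apply $\deg_{\mathrm{orb}}$ to each isomorphism $\varphi_j$. Since $\deg_{\mathrm{orb}}(K_{\log})=\deg(K)+k=2g-2+k$, and since the $j$th monomial $W_j$ has exponent vector $(b_{1j},\dots,b_{tj})$, I obtain for each $j=1,\dots,s$ a linear equation
\begin{equation*}
\sum_{i=1}^{t} b_{ij}\,\deg_{\mathrm{orb}}(L_i)\;=\;2g-2+k.
\end{equation*}
On the other hand, quasi-homogeneity (1) applied to the single monomial $W_j$ gives $\sum_i b_{ij}k_i=d$, equivalently $\sum_i b_{ij}q_i=1$. Multiplying by $2g-2+k$ shows that the vector $\bigl(q_i(2g-2+k)\bigr)_i$ satisfies the same linear system as $\bigl(\deg_{\mathrm{orb}}(L_i)\bigr)_i$.

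Now I would invoke the nondegeneracy hypothesis. Part (1) of Definition 1.1 says that the fractional degrees $q_i$ are uniquely determined by $W$, i.e., the linear system $B^{T}\vec x=\vec 1$ (with $B=(b_{ij})$ the $t\times s$ exponent matrix) has a unique solution. This is exactly the statement that $B^{T}$ has rank $t$, so the two tuples coincide:
\begin{equation*}
\deg_{\mathrm{orb}}(L_j)\;=\;q_j(2g-2+k),\qquad j=1,\dots,t.
\end{equation*}
Substituting into the desingularization formula from the first paragraph produces $\deg(|L_j|)=q_j(2g-2+k)-\sum_l a_j(h_l)$, which is the claim. Integrality then follows automatically, because $|L_j|$ is an honest holomorphic line bundle on the smooth coarse curve $\Sigma$, so its degree lies in $\mathbb{Z}$.

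The main obstacle I anticipate is not the algebraic manipulation but the bookkeeping step that relates $\deg_{\mathrm{orb}}(L)$ to $\deg(|L|)$: one must verify carefully that the explicit trivialization change (7), together with the prescribed local action at each $z_l$, produces exactly the shift $+\sum_l a_j(h_l)$ with the correct sign, and that in the Broad case (where $a_j(h_l)=0$) the formula correctly reduces to $|L|=L$ locally. Once this local-to-global identification is in place, the remaining content reduces to inverting the exponent matrix using nondegeneracy, which is the conceptual core of the proof.
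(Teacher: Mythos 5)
Your argument is correct and is essentially the standard proof of this formula in [11], to which the paper defers without reproducing the argument: one reads off $\sum_i b_{ij}\deg_{\mathrm{orb}}(L_i)=\deg K_{\log}=2g-2+k$ from the $W$-structure isomorphisms, identifies $\deg_{\mathrm{orb}}(L_j)=q_j(2g-2+k)$ using nondegeneracy (the exponent matrix has rank $t$, since the $q_i$ are uniquely determined), and then passes to $\deg(|L_j|)$ via the desingularization shift by $\sum_l a_j(h_l)$. I see no gaps beyond the sign bookkeeping you already flag, which works out as you state.
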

Therefore, by (18),(19) we get \begin{equation}
\mu(E_{02},\emptyset)=2q_{j}(2g-2+k)-2\sum_{l=1}^{k} a_{j}(h_{l})\in \mathbb{Z} \end{equation} \par
The remaining problem is to compute the Maslov index $\mu(E_{12},F_{12})$.
It is easy considering our given boundary conditions (14), by Theorem 4.1, we get
\begin{equation}\mu(E_{12},F_{12})= -2\sum_{l=1}^{k} v_{j,l} \end{equation}
By (17), (20), (21), we get  \begin{equation} \mu(E,F)=2q_{j}(2g-2+k)-2\sum_{l=1}^{k} a_{j}(h_{l})+2\sum_{l=1}^{k} v_{j,l} \end{equation}\par
Combining Theorem 2.3, (16) and (22), we get
\begin{theorem} \begin{eqnarray*}
&&\verb"ind"(\bar{\partial}^{t,\theta}:W^{p,B}_{1}(inn)\rightarrow W^{p}_{0}(inn))\\
&=&(1-2q_{j})(2-2g-k)-2\sum_{l=1}^{k} a_{j}(h_{l})+2\sum_{l=1}^{k} v_{j,l}\in\mathbb{ Z} \end{eqnarray*}
\end{theorem}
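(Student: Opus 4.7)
My plan is to apply the Riemann-Roch theorem with boundary (Theorem 2.3) to the compact Riemann surface with boundary $\Sigma_{01} := \Sigma \setminus \cup_{l=1}^k B_1(z_l)$, equipped with the complex line bundle $E := |L_j||_{\Sigma_{01}}$ (of complex rank $n=1$) and the real subbundle $F := |L_j||_{\cup_l \partial B_1(z_l)}$ cut out by the totally real trivialization $\Psi_l(e^{i\theta})\mathbb{R} = e^{iv_{j,l}\theta}\mathbb{R}$ from equation (14). Since $\chi(\Sigma_{01}) = 2 - 2g - k$, Theorem 2.3 reduces the index to
\begin{equation*}
\verb"ind"(\bar\partial^{t,\theta}) = (2-2g-k) + \mu(E,F),
\end{equation*}
so the entire problem becomes the computation of the boundary Maslov index $\mu(E,F)$.

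To evaluate $\mu(E,F)$, I would use the composition (gluing) axiom for the Maslov index. Capping off $\Sigma_{01}$ along its circle boundary by the disjoint union $\Sigma_{12}$ of $k$ closed unit discs yields the closed surface $\Sigma_{02}$ of genus $g$; the line bundle $|L_j|$ extends to a bundle $E_{02}$ over $\Sigma_{02}$, and $F$ is seen as a real subbundle $F_{12} \subset E_{12} := E_{02}|_{\Sigma_{12}}$ over the disc boundaries. The composition axiom (Theorem 4.1) then gives
\begin{equation*}
\mu(E,F) = \mu(E_{02},\emptyset) - \mu(E_{12},F_{12}).
\end{equation*}
For the closed piece, Theorem 4.2 equates $\mu(E_{02},\emptyset)$ with $2c_1(E_{02})[\Sigma_{02}] = 2\deg(|L_j|)$, and here I would invoke Theorem 2.5 (taken from [11]) which supplies $\deg(|L_j|) = q_j(2g-2+k) - \sum_l a_j(h_l)$. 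For the disc piece, each boundary loop $\theta \mapsto e^{iv_{j,l}\theta}\mathbb{R}$ in the Lagrangian Grassmannian of $\mathbb{C}$ contributes a Maslov number $-2v_{j,l}$ by the disc axiom, giving $\mu(E_{12},F_{12}) = -2\sum_l v_{j,l}$.

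The main technical obstacle is pinning down the exact sign and normalization conventions for the Maslov loop $e^{iv_{j,l}\theta}\mathbb{R}$, since the formulas in Theorem 4.1 are sensitive to the choice of winding number convention and the absent factor of $2\pi$ flagged in Remark 2.2; once those are matched to McDuff-Salamon's framework, the rest is mechanical. Assembling the two pieces yields $\mu(E,F) = 2q_j(2g-2+k) - 2\sum_l a_j(h_l) + 2\sum_l v_{j,l}$, which I would then substitute back into the Riemann-Roch formula and simplify via $(2-2g-k) + 2q_j(2g-2+k) = (1-2q_j)(2-2g-k)$ to obtain the claimed identity. Integrality is automatic: $\deg(|L_j|) \in \mathbb{Z}$ by Theorem 2.5, the Euler characteristic is integral, and $v_{j,l} \in \mathbb{Z}$ by construction, so every summand on the right-hand side lies in $\mathbb{Z}$.
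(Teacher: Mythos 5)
Your argument is exactly the paper's: Riemann--Roch with boundary on $\Sigma\setminus\cup_l B_1(z_l)$ gives $(2-2g-k)+\mu(E,F)$, and the composition axiom splits $\mu(E,F)$ into $2\deg(|L_j|)=2q_j(2g-2+k)-2\sum_l a_j(h_l)$ from the capped-off closed surface minus the disc contribution $-2\sum_l v_{j,l}$ from the boundary loops $e^{iv_{j,l}\theta}\mathbb{R}$, matching equations (16)--(22) of the text. The proposal is correct and requires no further comment.
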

\subsection{$L^{p}$-index gluing theorem}
\hspace{14pt} Now let us compute the index  \begin{eqnarray*} \verb"ind"(\bar{\partial}^{t,\theta}:{W^{p,B}_{1,1+\kappa_{j,l}}}\rightarrow W^{p}_{0,1+\kappa_{j,l}}) \end{eqnarray*}
Note that here the base manifold is an end $S^{1}\times [0,\infty)$, so this is an index problem on noncompact manifold on which classical Atiyah-Singer theorems ([1-7]) do not work. However, this kind of manifold is the easiest case of noncompact manifold, whose index theory has been studied by Donaldson, Lockhart, McOwen ([9],[13]), etc. We need apply and generalize their results to solve our problems. \par
In [9], Donaldson studied the index theory over a tubular manifold and got index gluing theorems (see the Appendix). In fact, he mainly considered the 4-dimension case, but his theorems also can fit for general case. Here as an application of his work, we will prove the $L^{p}$-index gluing theorem with weights for 2-dimension case.\par
First consider the following gluing problem with only two ends. Assume that Riemannian surface $\Sigma$ is a disjoint union of two disconnected components $\Sigma=\Sigma_{1}\cup\Sigma_{2}$, and two ends $ Y\times (0,\infty),\bar{Y}\times (0,\infty)$ whose orientations are oppostie are contained in different components, where $Y=S^{1}$. Suppose there are vector bundles $E_{1},E_{2}$ on $\Sigma_{1},\Sigma_{2}$ respectively. Then we can define Sobolev spaces $L^{p}(E_{i}), L^{p}_{1}(E_{i})$ and differential operators
 \begin{eqnarray*}D_{i}: L^{p}_{1}(E_{i})\rightarrow L^{p}(E_{i}) \end{eqnarray*}
Assume that $D_{i}$ can be written as $D_{i}=\frac{d}{dt}+L_{i},i=1,2$, where $L_{i}$ are self-dual elliptic operators. \par
Now we consider the Riemannian surfaces $\Sigma^{\sharp}$ obtained by identifying the two ends of $\Sigma$, then construct $E^{\sharp}$ over $\Sigma^{\sharp}$ and the operator $D^{\sharp}: L^{p}_{1}(E^{\sharp})\rightarrow L^{p}(E^{\sharp})$ (see [9] or the Appendix for more details).\par
We can also prove these operators are Fredholm operators, then define their Fredholm indices $\verb"ind"(D_{i}),\verb"ind"(D^{\sharp})$ as [9]. Moreover we can prove \par
\begin{theorem}[$L^{p}$-Index gluing theorem]
In situations above, assume that operators $L_{1}, L_{2}$ are invertible in the decomposition $D_{1}=\frac{d}{dt}+L_{1},D_{2}=\frac{d}{dt}+L_{2}$, we have
\begin{equation}\verb"ind"(D^{\sharp})=\verb"ind"(D_{1})+\verb"ind"(D_{2}) \end{equation}
\end{theorem}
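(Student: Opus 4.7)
The plan is to adapt Donaldson's gluing argument [9] from the $L^{2}$ framework to the $L^{p}$ setting, using the invertibility hypothesis on $L_{1}, L_{2}$ as the source of exponential decay along the cylindrical ends. The strategy has three layers: (i) derive exponential decay estimates in $L^{p}$ along each end, (ii) construct a parametrix for $D^{\sharp}$ out of the inverses of $D_{1}, D_{2}$ together with a partition of unity adapted to the gluing, and (iii) identify the kernel and cokernel of $D^{\sharp}$ with the direct sums of those of $D_{1}$ and $D_{2}$, from which the additivity of the index follows.

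For (i), on an end $Y \times (0, \infty)$ with $Y = S^{1}$, I would expand a section $u$ in Fourier modes along $Y$. The equation $D_{i} u = (\partial_{t} + L_{i}) u = 0$ then decouples into ODEs whose characteristic exponents are bounded away from zero by the invertibility of $L_{i}$. Combining the resulting spectral gap $\lambda > 0$ with standard $L^{p}$ elliptic regularity on finite cylinders yields an estimate of the form
\begin{equation*}
\|u\|_{L^{p}(Y \times [T-1, T+1])} \leq C e^{-\lambda T}\bigl(\|u\|_{L^{p}(\Sigma_{i})} + \|D_{i} u\|_{L^{p}(\Sigma_{i})}\bigr),
\end{equation*}
and the analogous decay holds for the formal adjoint, since $L_{i}$ self-dual makes $L_{i}^{\ast}$ invertible with the same gap.

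For (ii) and (iii), I view $\Sigma^{\sharp}$ as obtained by identifying the ends after cutting them at neck length $T$, and fix cutoff functions $\beta_{1}, \beta_{2} \geq 0$ with $\beta_{1} + \beta_{2} \equiv 1$ supported on the two halves of the neck, together with slightly enlarged cutoffs $\chi_{i}$ equal to $1$ on $\mathrm{supp}\,\beta_{i}$. Using right inverses $Q_{i}$ of $D_{i}$ modulo cokernel (available because each $D_{i}$ is Fredholm on $\Sigma_{i}$), I set $Q^{\sharp} f := \chi_{1} Q_{1}(\beta_{1} f) + \chi_{2} Q_{2}(\beta_{2} f)$. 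A direct computation gives $D^{\sharp} Q^{\sharp} = \mathrm{id} + R$, where $R$ is a sum of commutators $[D^{\sharp}, \chi_{i}] Q_{i}(\beta_{i} f)$ supported in the middle of the neck; the decay estimate from (i), applied to $Q_{i}(\beta_{i} f)$, then bounds $\|R\|_{L^{p} \to L^{p}} \leq C e^{-\lambda T}$, so for $T$ large a Neumann series yields a genuine right inverse of $D^{\sharp}$ modulo a finite-dimensional correction identified with $\mathrm{coker}\, D_{1} \oplus \mathrm{coker}\, D_{2}$. Dualizing gives the left-inverse side and identifies $\ker D^{\sharp}$ with $\ker D_{1} \oplus \ker D_{2}$ up to an exponentially small correction, which vanishes in the integer-valued index. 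Continuity of the family $T \mapsto D^{\sharp}(T)$ and homotopy invariance of the Fredholm index then yield (23).

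The main obstacle I anticipate is establishing uniform (in the gluing parameter $T$) $L^{p}$ bounds on $Q^{\sharp}$ and on the commutator terms, and replacing Donaldson's Hilbert-space orthogonal projections by duality arguments valid for general $p > 1$. In particular, the Fourier-mode decay estimate in step (i) must be proved directly from the ODE analysis rather than imported from $L^{2}$ spectral theory, and the Neumann series in step (ii) must be controlled in the $L^{p} \to L^{p}$ operator norm; once these $L^{p}$ analytic points are in hand, the topological conclusion of index additivity follows along the lines of [9].
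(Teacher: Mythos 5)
Your overall strategy is the same as the paper's: both are adaptations of Donaldson's gluing argument, splicing the right inverses $Q_{i}$ of $D_{1},D_{2}$ with a partition of unity on the neck, showing $D^{\sharp}$ is approximately surjective, and matching $\ker D^{\sharp}$ with $\ker D_{1}\oplus\ker D_{2}$ for large $T$. The implementations diverge at two points worth noting. First, the paper never needs exponential decay: it chooses cutoffs $\phi_{1},\phi_{2}$ with $\phi_{1}^{p}+\phi_{2}^{p}=1$ and $\|\nabla\phi_{i}\|_{L^{\infty}}=\epsilon(T)=O(T^{-1})$, so every error term (the correction $Q_{i}D_{i}(\phi_{i}f)$ in the kernel maps, and $D^{\sharp}P(\rho)-\rho$ in the approximate right inverse) is bounded by $C_{p}\epsilon(T)\|\cdot\|_{p}$, which is already small enough for large $T$; the only place invertibility of $L_{i}$ enters is Lemma 4.1, the uniform $L^{p}$ bound on $Q_{i}$ obtained from the convolution kernel $Q(\rho)=\int K(s-t)\rho(t)\,dt$. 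Your route through a spectral gap, exponential decay, and a Neumann series is more work for the same conclusion. Second, for the cokernel the paper avoids $L^{p}$ duality entirely (Step 4): it stabilizes $D_{i}$ by finite-rank maps $U_{i}:\mathbb{R}^{n_{i}}\to\Gamma(E_{i})$ supported in the interior so that $\tilde{D}_{i}=D_{i}\oplus U_{i}$ is surjective, runs the surjective case, and subtracts $n_{1}+n_{2}$ at the end. This sidesteps exactly the adjoint-on-$L^{p'}$ issues you flag as your main anticipated obstacle, and is the cleaner choice here.

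One caution on your step (i): the estimate
\begin{equation*}
\|u\|_{L^{p}(Y\times[T-1,T+1])}\leq Ce^{-\lambda T}\bigl(\|u\|_{L^{p}(\Sigma_{i})}+\|D_{i}u\|_{L^{p}(\Sigma_{i})}\bigr)
\end{equation*}
is false as stated (take $u$ supported near $Y\times\{T\}$); it holds only for sections with $D_{i}u=0$ on the tail of the end, where the decaying Fourier modes propagate from the compact part. Moreover, what your step (ii) actually requires is not decay of homogeneous solutions but off-diagonal decay of the kernel of $Q_{i}$, i.e.\ $\|Q_{i}\rho\|_{L^{p}(A)}\leq Ce^{-\lambda d}\|\rho\|_{L^{p}}$ when $\rho$ is supported at cylindrical distance $\geq d$ from $A$; this follows from the exponential decay of $K(s-t)$ but is a different statement from the one you prove. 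Both points are repairable, and disappear entirely if you adopt the paper's $O(T^{-1})$ cutoff bookkeeping.
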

$Proof$: See the Appendix. $\Box$. \\
\\
$\textbf{Remark 2.3.}$ The key point of the proof is that differential operators $D_{i}$ can be decomposed as $D_{i}=\frac{d}{dt}+L_{i}$ as Donaldson did. Therefore, it is similar to his proof of $L^{2}$-edition.\par
 When the operators $L_{i}(i=1,2)$ are not invertible, we must introduce weights $\alpha_{i}\in\mathbb{R}$ and consider weighted Sobolev spaces
$L^{p,\alpha_{i}}_{1}(E_{i}), L^{p,\alpha_{i}}(E_{i})$, and
\begin{eqnarray*}
D^{\alpha_{i}}: L^{p,\alpha_{i}}_{1}(E_{i})\rightarrow L^{p,\alpha_{i}}(E_{i}), i=1,2
\end{eqnarray*}
When $\alpha_{2}=-\alpha_{1}=-\alpha$, we can similarly glue the two ends and get
\begin{eqnarray*}D^{\sharp}: L^{p,\alpha}_{1}(E^{\sharp})\rightarrow L^{p,\alpha}(E^{\sharp}) \end{eqnarray*}
However, the introduction of weighted Sobolev space is equivalent to replace the operator $L$ of $(L-\alpha)$ in Sobolev space without weights.
Therefore,we can easily generalize the index gluing theorem above to the case with weights as follows (see [9]).
\begin{theorem}[$L^{p}$-Index gluing theorem with weights I]
Assuming $\alpha\in \mathbb{R} $ such that $L_{i}-\alpha(i=1,2)$ is invertible, we have \begin{eqnarray}
\verb"ind"(D^{\sharp})=\verb"ind"(D^{\alpha})+\verb"ind"(D^{-\alpha}) \end{eqnarray}
\end{theorem}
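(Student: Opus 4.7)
The plan is to reduce Theorem 2.6 to the unweighted version (Theorem 2.5) by conjugating with an explicit weight function, exactly as suggested by the remark preceding the statement. The effect of conjugation is to trade the weight $\alpha$ on the Sobolev space for a shift $-\alpha$ in the operator $L_i$, reducing the invertibility hypothesis of Theorem 2.6 to the invertibility hypothesis of Theorem 2.5.

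First, on each end $Y\times(0,\infty)$ with coordinate $t$, I would introduce the multiplication operator $M_\alpha:u\mapsto e^{\alpha t}u$, extended by a smooth cutoff $\chi(t)$ (equal to $1$ for $t$ large, $0$ on the compact body) so that $M_\alpha=e^{\alpha\chi(t)t}$ is a globally defined smooth strictly positive function on $\Sigma_i$. This yields isometries (up to equivalent norms) $M_\alpha:L^{p,\alpha}(E_i)\to L^p(E_i)$ and $M_\alpha:L^{p,\alpha}_1(E_i)\to L^p_1(E_i)$. Setting $v=e^{\alpha t}u$ on the end, a direct calculation gives
\begin{eqnarray*}
(\partial_t+L_i)u=e^{-\alpha t}(\partial_t+L_i-\alpha)v,
\end{eqnarray*}
so that under conjugation the weighted operator $D_i^\alpha$ corresponds to $\tilde{D}_i:=\partial_t+(L_i-\alpha)$ acting on unweighted spaces, modulo a bounded perturbation supported in the compact cutoff region. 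Since that perturbation is a compact operator between the relevant Sobolev spaces, it preserves Fredholmness and index, giving $\verb"ind"(D_i^\alpha)=\verb"ind"(\tilde{D}_i)$, and similarly $\verb"ind"(D_i^{-\alpha})=\verb"ind"(\partial_t+(L_i-\alpha))$ on the second piece once the opposite sign convention of $\alpha_2=-\alpha_1$ is taken into account (this is exactly the content of the identification $\alpha_2=-\alpha_1=-\alpha$).

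Second, I would carry out the analogous conjugation globally on the glued manifold $\Sigma^\sharp$. The weight function on $\Sigma^\sharp$ is $e^{\alpha t}$ near the neck, where $t$ extends through the neck because the gluing identifies the coordinate on end $2$ with $-t$: the opposite-sign convention $\alpha_2=-\alpha_1$ ensures that the piecewise definitions $e^{\alpha_1 t_1}$ and $e^{\alpha_2 t_2}$ agree (up to a harmless constant factor) and fit together into a smooth global weight on $\Sigma^\sharp$. Under this global conjugation, $D^\sharp:L^{p,\alpha}_1(E^\sharp)\to L^{p,\alpha}(E^\sharp)$ is transformed into the unweighted operator $\tilde{D}^\sharp$ obtained by gluing $\tilde{D}_1$ and $\tilde{D}_2$, again up to a compactly supported bounded perturbation that does not change the index.

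Finally, the invertibility hypothesis $L_i-\alpha$ invertible ($i=1,2$) is exactly the condition that the shifted pieces $\tilde{D}_i=\partial_t+(L_i-\alpha)$ satisfy the invertibility assumption of Theorem 2.5. Applying the unweighted gluing theorem to the $\tilde{D}_i$ gives $\verb"ind"(\tilde{D}^\sharp)=\verb"ind"(\tilde{D}_1)+\verb"ind"(\tilde{D}_2)$, and rewriting each side via the conjugation equivalences yields (24). The main obstacle in making this rigorous is the bookkeeping at the neck: one has to verify that the weight function glues smoothly across the two pieces (which forces the convention $\alpha_2=-\alpha_1$), and that the cutoff-induced perturbation of the conjugated operator is indeed compact in each of the weighted Sobolev categories — both are standard once the setup is in place, and the rest of the argument is a direct application of Theorem 2.5.
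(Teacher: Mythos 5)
Your proposal is correct and follows essentially the same route as the paper: the paper (following Donaldson, and spelled out in its Appendix 4.2) observes that multiplication by $e^{\alpha t}$ identifies the weighted spaces with the unweighted ones and conjugates $\frac{d}{dt}+L_i$ into $\frac{d}{dt}+(L_i-\alpha)$, notes that the convention $\alpha_2=-\alpha_1$ is exactly what makes the weights match across the glued neck, and then invokes the unweighted gluing theorem (Theorem 2.5) for the shifted operators. Your added care about extending the weight by a cutoff and checking that the resulting compactly supported perturbation does not affect the index only makes explicit what the paper leaves implicit.
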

We can further consider weight vector case which corresponds to more ends.
Choose a weight $\alpha_{i}$ for each end $Y_{i}\times (0,\infty)$ of $\Sigma$ and define a weight vector $\vec{\alpha}=(\alpha_{1},\cdots,\alpha_{N})$. Fix a positive function $W$ on $\Sigma$ which is equal to $e^{\alpha_{i}t}$ on the $i$th end and define norms\begin{eqnarray*}
   ||f||_{L^{p,\vec{\alpha}}}=||Wf||_{L^{p}},||f||_{L_{1}^{p,\vec{\alpha}}}=||Wf||_{L_{1}^{p}} \end{eqnarray*}
then we can define $L^{p,\vec{\alpha}},L_{1}^{p,\vec{\alpha}}$ and
\begin{eqnarray*} D^{\vec{\alpha}}:L_{1}^{p,\vec{\alpha}}\rightarrow L^{p,\vec{\alpha}}\end{eqnarray*}
Similar to [9], we can easily obtain the index gluing theorem in weight vector case.
\begin{theorem}[$L^{p}$-Index gluing theorem with weights II]
\begin{eqnarray}
\verb"ind"(D^{\sharp;(\alpha_{2},\cdots,\alpha_{N})})=\verb"ind"(D^{(\alpha_{1},\alpha_{2},\cdots,\alpha_{N})})
+\verb"ind"(D^{(-\alpha_{1},\alpha_{2},\cdots,\alpha_{N})})\end{eqnarray}
\end{theorem}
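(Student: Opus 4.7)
The plan is to reduce Theorem 2.7 to the single-weight gluing theorem (Theorem 2.6) by absorbing the ``spectator'' weights $\alpha_{2},\ldots,\alpha_{N}$ on the ends not involved in the gluing into translational shifts of the asymptotic operators there. The mechanism is exactly the one alluded to in the remark preceding Theorem 2.6: placing a weight $\alpha_{i}$ on the $i$-th end is equivalent to replacing the limiting operator $L_{i}$ by $L_{i}-\alpha_{i}$ on the corresponding unweighted Sobolev spaces.

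First I would fix the picture. As in Theorem 2.5, $\Sigma=\Sigma_{1}\cup\Sigma_{2}$ and the $\sharp$ operation identifies a distinguished end $Y_{1}\subset\Sigma_{1}$ carrying weight $\alpha_{1}$ with $\bar{Y}_{1}\subset\Sigma_{2}$ carrying weight $-\alpha_{1}$; the remaining ends, with weights $\alpha_{2},\ldots,\alpha_{N}$, are partitioned between $\Sigma_{1}$ and $\Sigma_{2}$ and are unaffected by $\sharp$, so $\Sigma^{\sharp}$ inherits the weight vector $(\alpha_{2},\ldots,\alpha_{N})$ appearing on the left-hand side of (25).

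Next I would introduce a positive smooth function $\tilde{W}$ that equals $e^{\alpha_{i}t}$ on the $i$-th end for each $i\geq 2$, is identically $1$ in a neighborhood of $Y_{1}\cup\bar{Y}_{1}$, and equals $1$ on the compact piece. Since $\tilde{W}$ is supported away from the gluing region, it descends to $\Sigma^{\sharp}$. Multiplication by $\tilde{W}$ provides Banach space isomorphisms from $L^{p,\vec{\alpha}}$ onto $L^{p,(\alpha_{1},-\alpha_{1},0,\ldots,0)}$ and from $L_{1}^{p,\vec{\alpha}}$ onto the analogous $L_{1}^{p,(\alpha_{1},-\alpha_{1},0,\ldots,0)}$, and analogously on the glued surface. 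Conjugation by $\tilde{W}$ preserves the Fredholm index and modifies the asymptotic form of $D$ on the $i$-th end ($i\geq 2$) from $\frac{d}{dt}+L_{i}$ to $\frac{d}{dt}+(L_{i}-\alpha_{i})$, while leaving the glued pair of ends untouched.

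After this reduction, the conjugated operator $\tilde{D}=\tilde{W}D\tilde{W}^{-1}$ fits exactly the setup of Theorem 2.6, with a single nontrivial weight $\alpha_{1}$ on the glued pair of ends. Provided each $L_{i}-\alpha_{i}$ is invertible for $i\geq 2$ and $L_{1}-\alpha_{1}$ is invertible (the hypothesis of Theorem 2.6), the identity $\verb"ind"(\tilde{D}^{\sharp})=\verb"ind"(\tilde{D}^{\alpha_{1}})+\verb"ind"(\tilde{D}^{-\alpha_{1}})$ holds, and translating back through the multiplication isomorphisms yields (25). The main step requiring care will be bookkeeping: ensuring the invertibility hypotheses on every $L_{i}-\alpha_{i}$ so that the conjugated operators are genuinely Fredholm, and verifying that $\tilde{W}$ can be chosen supported off the gluing region without disturbing the asymptotic structure of $D$ near the compact part. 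Beyond that, the argument is a formal reduction in line with the author's remark that the theorem follows ``similarly to [9]'' from the single-weight case.
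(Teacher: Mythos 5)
Your proposal is correct and uses essentially the same mechanism the paper relies on: the equivalence between adding a weight $e^{\alpha_i t}$ on an end and replacing the asymptotic operator $L_i$ by $L_i-\alpha_i$ on unweighted spaces, followed by the single-weight gluing theorem; the paper itself gives no more detail than ``similar to [9]'', so your conjugation by a weight function $\tilde{W}$ supported off the glued pair of ends is a faithful (and somewhat more explicit) rendering of the intended argument. The only point to keep in view is that Theorem 2.6 is stated for a surface whose only ends are the two being glued, so you are implicitly using the version of the gluing argument that tolerates additional ends with invertible asymptotic operators --- which the Donaldson-style proof of Theorem 2.5 does supply.
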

\subsection{Lockhart-McOwen theory}
\hspace{14pt} In this part, we recollect the work of Lockhart and McOwen ([13]) for general elliptic operators defined on a noncompact manifolds with finite ends. In next section we will compute the index using their work.\par
Suppose $X$ is an $n$-dimensional noncompact manifold without boundary, containing a compact set $X_{0}$ such that
\begin{eqnarray*} X\setminus X_{0}=\{(\omega,\tau):\omega\in \Omega, \tau\in(0,\infty)\}\end{eqnarray*}
where $\Omega$ is a $n-1$-dimensional closed Riemannian manifold with a smooth measure $d\omega$.\par
Let $E,F$ be rank-d vector bundles over $X$. Denote by $C^{\infty}(E)$ the set of smooth sections and $C_{0}^{\infty}(E)$ the set of smooth sections with compact supported sets. Choose a finite cover $\{\Omega_{1},\cdots,\Omega_{N}\}$ of coordinate patches of $\Omega$ and let $X_{v}=\Omega_{v}\times (0,+\infty)$. We can continue to choose a covering $X_{N+1},\cdots, X_{M}$ of coordinate patches of $X_{0}$ such that $E$ can be trivialized over $X_{v},v=1,\cdots,N,\cdots,M$. Let $u=(u_{1},\cdots,u_{d})$ be a trivialization of a section $u$ with compact supported set over $X_{v}$, we can define the norm
\begin{eqnarray*} ||u||_{W^{p}_{s}(X_{v})}:=\sum_{|\alpha|\leq s}\sum_{l=1}^{d}||D^{\alpha}u_{l}||_{W^{p}_{0}(X_{v})}, (D=-i\partial/\partial x)
\end{eqnarray*}
where we use the measure $d\omega d\tau$ if $v=1, \cdots, N$. Let $\varphi_{1},\cdots, \varphi_{N+M}$ be a set of $C^{\infty}$ partition functions subordinate to the cover $X_{1},\cdots, X_{N+M}$. We define a norm on $C_{0}^{\infty}(E)$ by
\begin{eqnarray*} ||u||_{W_{s}^{p}}:=\sum_{v=1}^{N+M}||\varphi_{v}u||_{W_{s}^{p}(X_{v})} \end{eqnarray*}
and let $W_{s}^{p}(E)$ be the closure of $C_{0}^{\infty}(E)$ in this norm. We can add a weight at infinity to generalize this space. Over $X_{v}, v=1,\cdots, N$ we define the weighted norm
\begin{eqnarray*}||u||_{W_{s,k}^{p}(X_{v})}:=\sum_{|\alpha|\leq s}\sum_{v=1}^{d}||e^{k\tau}D^{\alpha}u_{l}||_{W_{0}^{p}(X_{v})} \end{eqnarray*}
and replace $W_{s}^{p}(E)$ by $W_{s,k}^{p}(E)$ whose norm is given below
\begin{eqnarray*}||u||_{W_{s,k}^{p}}:=\sum_{v=1}^{N}||\varphi_{v}u||_{W_{s,k}^{p}(X_{v})}+\sum_{v=N+1}^{N+M}||\varphi_{v}u||_{W_{s}^{p}(X_{v})} \end{eqnarray*}
Similarly we can define $W_{r,k}^{p}(F)$, where $s=(s_{1},\cdots,s_{I}), r=(r_{1},\cdots,r_{J})$ are multiple indices.
Suppose $A: C_{0}^{\infty}(E)\rightarrow C_{0}^{\infty}(F)$ is translation invariant elliptic operator with respect to $(s,r)$.
Then $A: W_{s,k}^{p}(E)\rightarrow W_{r,k}^{p}(F)$ is a bounded operator. Furthermore, Lockhart and McOwen([13]) proved the following theorem
\begin{theorem}[Index jumping formula]
Suppose $A$ is elliptic with respect to $(r,s)$ and is translation invariant when $\tau >0$. Then we have:\\
(1)There exists a discrete subset $\mathfrak{D}_{A} \subset \mathbb{R}$ such that the operator:\\
$A:W^{p}_{s,k}(E)\rightarrow W^{p}_{r,k}(F)$ is Fredholm operator if and only if $k\in \mathbb{R}\setminus \mathfrak{D}_{A}$.\\
(2)For $k_{1},k_{2}\in \mathbb{R}\setminus \mathfrak{D}_{A}$ with $k_{1}<k_{2}$, there is
\begin{eqnarray} i_{k_{1}}(A)-i_{k_{2}}(A)=N(k_{1},k_{2}) \end{eqnarray}
where $i_{k_{j}}$ is the Fredholm index of $A: W^{p}_{s,k_{j}}(E)\rightarrow W^{p}_{r,k_{j}}(F), j=1,2$,\begin{eqnarray}
N(k_{1},k_{2})=\Sigma\{d(\lambda):\lambda\in\mathfrak{E}_{A},k_{1}<Im(\lambda)<k_{2}\}  \end{eqnarray}
where $\mathfrak{E}_{A}$ is the spectrum of $A$, $\mathfrak{D}_{A}:=\{Im(\lambda)\in \mathbb{R}: \lambda\in \mathfrak{E}_{A}\}$, $d(\lambda)$ is the dimension of the eigenspace corresponding to the spectrum point $\lambda$.
\end{theorem}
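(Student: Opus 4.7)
The plan is to reduce the weighted Fredholm problem to an analytic Fredholm theory statement for an indicial family on the cross-section $\Omega$. First I would conjugate out the weight: multiplication by $e^{k\tau}$ on the end yields an isometry $W^{p}_{s,k}(E)\cong W^{p}_{s,0}(E)$, and similarly for $F$, so studying $A$ at weight $k$ is equivalent to studying the conjugated operator $A_{k}:=e^{k\tau}Ae^{-k\tau}$ on the unweighted spaces. Since $A$ is translation-invariant on $X\setminus X_{0}$, this conjugation amounts to the substitution $\partial_{\tau}\mapsto \partial_{\tau}-k$ on the ends while leaving only a zeroth-order perturbation on the compact part, which affects neither Fredholmness nor index.

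Next I would analyze $A_{k}$ on the cylindrical end via the Fourier--Laplace transform in $\tau$. Writing $A=P(\omega,\partial_{\omega},\partial_{\tau})$ locally, the transform produces a holomorphic family of elliptic operators $\widehat{A}(\lambda):=P(\omega,\partial_{\omega},-i\lambda)$ on the closed manifold $\Omega$. Analytic Fredholm theory then guarantees that $\widehat{A}(\lambda)^{-1}$ exists as a meromorphic operator-valued function on $\mathbb{C}$; its polar set is precisely the spectrum $\mathfrak{E}_{A}$, which is therefore discrete, so $\mathfrak{D}_{A}$ is discrete as well. A standard parametrix argument, patching the inversion on the end (via $\widehat{A}(\lambda)^{-1}$ restricted to the line $\mathrm{Im}(\lambda)=k$, by a Paley--Wiener type estimate) with a compactly supported parametrix on $X_{0}$, shows that $A:W^{p}_{s,k}(E)\to W^{p}_{r,k}(F)$ is Fredholm if and only if $k\notin\mathfrak{D}_{A}$. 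This gives statement (1).

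For statement (2), fix $k_{1}<k_{2}$ in $\mathbb{R}\setminus\mathfrak{D}_{A}$. The Fredholm index is locally constant on the complement of $\mathfrak{D}_{A}$ by standard stability, so the entire jump concentrates at the points of $\mathfrak{E}_{A}$ whose imaginary part lies in $(k_{1},k_{2})$. For each such $\lambda_{0}$, the Jordan chains of $\widehat{A}(\lambda)$ at $\lambda_{0}$ produce $d(\lambda_{0})$ independent generalized eigensections of the form $e^{-i\lambda_{0}\tau}v(\omega,\tau)$, with $v$ polynomial in $\tau$; after cutting off to the end, these sections lie in $W^{p}_{s,k_{1}}$ but not in $W^{p}_{s,k_{2}}$. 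A dual construction using the formal adjoint $A^{*}$ handles the cokernel side. Adding the two contributions over all $\lambda_{0}$ in the open strip $k_{1}<\mathrm{Im}(\lambda)<k_{2}$ gives $i_{k_{1}}(A)-i_{k_{2}}(A)=\sum d(\lambda_{0})=N(k_{1},k_{2})$.

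The main obstacle is the precise bookkeeping in this last step when $\widehat{A}(\lambda)^{-1}$ has higher-order poles or nontrivial Jordan structure, because one must match the algebraic multiplicity $d(\lambda_{0})$ with the actual net change in $\dim\ker-\dim\mathrm{coker}$ as $k$ crosses $\mathrm{Im}(\lambda_{0})$. This reduces to a residue calculus for the meromorphic family $\widehat{A}^{-1}$ and a careful check that the polynomial-in-$\tau$ prefactors from Jordan chains are absorbed correctly into the weighted Sobolev norms; the correct pairing of kernel and cokernel contributions at $\lambda_{0}$ must avoid double-counting. The conjugation and parametrix steps, by contrast, are essentially formal once ellipticity and translation invariance are in hand.
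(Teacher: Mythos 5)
The paper itself contains no proof of this statement: Theorem 2.8 is quoted directly from Lockhart--McOwen [13] (``Furthermore, Lockhart and McOwen([13]) proved the following theorem''), so there is no in-paper argument to compare against. Your proposal is essentially a reconstruction of the proof in the cited literature: conjugate by $e^{k\tau}$ to trade the weight for the substitution $\partial_{\tau}\mapsto\partial_{\tau}-k$ on the end, Fourier--Laplace transform in $\tau$ to obtain the holomorphic indicial family $\widehat{A}(\lambda)$ on the cross-section $\Omega$, analytic Fredholm theory for the meromorphy of $\widehat{A}(\lambda)^{-1}$, a parametrix patched from the end and the compact core for statement (1), and the count of generalized eigensections $e^{-i\lambda_{0}\tau}v(\omega,\tau)$ with polynomial $\tau$-dependence for the jump in statement (2). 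This is the correct route, and the difficulty you flag --- matching the algebraic multiplicity $d(\lambda_{0})$ to the net change $\dim\ker-\dim\mathrm{coker}$ via residue calculus for $\widehat{A}^{-1}$ --- is exactly where the real work lies; in [13] it rests on results of Agmon--Nirenberg and Maz'ya--Plamenevskii type.

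Two points you pass over quickly do need justification before the outline closes. First, analytic Fredholm theory yields meromorphy of $\widehat{A}(\lambda)^{-1}$ only after you know the family is invertible at some point, and the Paley--Wiener type inversion on the line $\mathrm{Im}(\lambda)=k$ requires uniform invertibility there; both come from parameter-elliptic estimates for $\widehat{A}(\lambda)$ as $|\mathrm{Re}(\lambda)|\to\infty$ in a horizontal strip, which must be derived from the ellipticity and translation invariance of $A$. Second, discreteness of the polar set in $\mathbb{C}$ does not by itself give discreteness of $\mathfrak{D}_{A}=\{\mathrm{Im}(\lambda):\lambda\in\mathfrak{E}_{A}\}$ in $\mathbb{R}$, nor finiteness of $N(k_{1},k_{2})$: the imaginary parts could a priori accumulate, so you need that each strip of finite width contains only finitely many points of $\mathfrak{E}_{A}$, which again follows from the large-$|\mathrm{Re}(\lambda)|$ estimates. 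Finally, since the theorem is stated for general $p$, the cokernel contribution should be computed in the dual weighted $L^{p'}$ spaces rather than by Hilbert-space orthogonality. With these inputs made explicit, your plan coincides with the standard proof of the cited result.
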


\subsection{Index computation}
$\textbf{Proof of Theorem 1.3}$: First we compute
\begin{eqnarray*}\verb"ind"(\bar{\partial}^{t,\theta}: W^{p,B}_{1,1+\kappa_{j,l}}\rightarrow W^{p}_{0,1+\kappa_{j,l}}) \end{eqnarray*}
where $\kappa_{j,l}=-a_{j}(h_{l})+q_{j}-2/p$.\par
 We have a decomposition $\bar{\partial}=\partial_{t}+i\partial_{\theta}=\frac{d}{dt}+L$.
 Note that $L=i\partial_{\theta}$ is not invertible (Because its spectrum is $\mathbb{Z}$, and ker$L\cong \mathbb{C}$), so we need introduce weights as discussion above .\par
Naturally consider the number $1+\kappa_{j,l}$ as a weight: $\alpha=\alpha^{+}=1+\kappa_{j,l}$, and define $\alpha^{-}=-\alpha^{+}$.
Write ${\bar \partial}^{\alpha^{+}}, {\bar \partial}^{\alpha^{-}}$ as ${\bar \partial}^{+}, {\bar \partial}^{-}$ respectively. Then we have $\verb"ind"(\bar \partial)=\verb"ind"({\bar \partial}^{+})$. By $L^{p}$-index gluing theorem with weights I (Theorem 2.6), we get
\begin{eqnarray}
\verb"ind"({\bar \partial}^{\sharp})&=&\verb"ind"({\bar \partial}^{+})+\verb"ind"({\bar \partial}^{-})\end{eqnarray}
where the operator ${\bar \partial}^{\sharp}$ is defined over the compact Riemann surface $S^{1}\times I$, the boundary conditions only need reverse (14), \begin{equation}
\Psi_{l}(e^{i\theta})\mathbb{R}=e^{-iv_{j,l}\theta}\mathbb{R}  \end{equation}
 By Theorem 2.2, we get :
\begin{eqnarray}
\verb"ind"({\bar \partial}^{\sharp})=\chi(S^{1}\times I)+\mu(E,F)=\mu(E,F)\end{eqnarray}
Therefore, the computation of $\verb"ind"({\bar \partial}^{\sharp})$ can be transformed to be computation of boundary Maslov index $\mu(E,F)$. \par We adopt previous methods. First consider a disjoint union of two discs with opposite boundary conditions as $S^{1}\times I$, then glue the two discs on $S^{1}\times I$ along the boundary. So by Theorem 2.2, Theorem 4.1 and Theorem 4.2, we get :
\begin{eqnarray}
\verb"ind"({\bar \partial}^{\sharp})=\mu(E,F)=4-4v_{jl} \end{eqnarray}\\
On the other hand, by our index jumping formula (Theorem 2.8) we get
\begin{equation}
\verb"ind"({\bar \partial}^{-})-\verb"ind"({\bar \partial}^{+})=dim_{\mathbb{R}}\mathbb{C}=2. \end{equation}
Therefore by (28), (31), (32), we get
\begin{theorem}\begin{eqnarray}
\verb"ind"(\bar{\partial}^{t,\theta}:W^{p,B}_{1,1+\kappa_{j,l}}\rightarrow W^{p}_{0,1+\kappa_{j,l}})
  =1-2v_{jl}\end{eqnarray}
\end{theorem}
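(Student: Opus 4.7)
The plan is to reduce the weighted index on the cylindrical end $S^{1}\times[0,\infty)$ to a Maslov index computation on a compact glued surface, and then to extract the individual weighted indices via the Lockhart--McOwen jumping formula.

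First I would decompose the operator as $\bar{\partial}^{t,\theta}=\tfrac{d}{dt}+L$ with $L=i\partial_{\theta}$, and observe that $L$ has spectrum $\mathbb{Z}$ with $\ker L\cong\mathbb{C}$. Thus $L$ itself is not invertible, and to invoke Theorem 2.6 I must shift by a weight. The natural choice is $\alpha^{+}=1+\kappa_{j,l}$ paired with its reflection $\alpha^{-}=-\alpha^{+}$; the standing assumption $a_{j}(h_{l})-q_{j}+2/p\notin\{1,2\}$ from Theorem 1.2 guarantees $\alpha^{+}\notin\mathbb{Z}$, so $L-\alpha^{\pm}$ is invertible. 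Then Theorem 2.6 (gluing the two copies of the cylindrical end carrying the dual weights $\alpha^{\pm}$) produces a closed-up cylinder $S^{1}\times I$ and an operator $\bar{\partial}^{\sharp}$ satisfying
\begin{equation*}
\verb"ind"(\bar{\partial}^{\sharp})=\verb"ind"(\bar{\partial}^{+})+\verb"ind"(\bar{\partial}^{-}),
\end{equation*}
where the totally-real boundary condition on the capped cylinder is the reversal (29) of (14).

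Next I would compute $\verb"ind"(\bar{\partial}^{\sharp})$ by the Riemann--Roch theorem with boundary (Theorem 2.2). Since $\chi(S^{1}\times I)=0$, the index collapses to the boundary Maslov index $\mu(E,F)$. Imitating the decomposition argument of Section 2.1, I would further cut $S^{1}\times I$ along its core circle into two closed discs with opposite boundary conditions $e^{\pm iv_{j,l}\theta}\mathbb{R}$, apply the composition axiom of Theorem 4.1 and the Chern-class identity of Theorem 4.2 to the resulting sphere, and read off $\mu(E,F)=4-4v_{j,l}$ from the combined contributions of the two caps. This gives $\verb"ind"(\bar{\partial}^{\sharp})=4-4v_{j,l}$.

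Finally, the Lockhart--McOwen jumping formula (Theorem 2.8) identifies $\verb"ind"(\bar{\partial}^{-})-\verb"ind"(\bar{\partial}^{+})$ with the real dimension of the direct sum of eigenspaces of $L$ whose eigenvalues lie in the interval $(\alpha^{-},\alpha^{+})$; the only such eigenvalue is $0$, with eigenspace $\mathbb{C}$, so the difference equals $\dim_{\mathbb{R}}\mathbb{C}=2$. Solving the two-equation linear system in $\verb"ind"(\bar{\partial}^{\pm})$ then yields $\verb"ind"(\bar{\partial}^{+})=1-2v_{j,l}$, which is the claim. The main obstacle is the Maslov-index bookkeeping on the capped cylinder: I must carefully track how the gluing inverts the boundary trivialization on one end to the conjugate exponent $e^{-iv_{j,l}\theta}$, and then verify that the composition axiom assembles the two disc contributions into the precise factor $4-4v_{j,l}$ rather than $-4v_{j,l}$ or $4+4v_{j,l}$; every sign in the final answer is controlled at this step.
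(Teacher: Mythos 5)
Your proposal follows the paper's proof essentially verbatim: the same decomposition $\bar{\partial}^{t,\theta}=\frac{d}{dt}+i\partial_{\theta}$ with weights $\alpha^{\pm}=\pm(1+\kappa_{j,l})$, the same application of the $L^{p}$-index gluing theorem (Theorem 2.6) to produce $\bar{\partial}^{\sharp}$ on $S^{1}\times I$, the same reduction to the boundary Maslov index $\mu(E,F)=4-4v_{j,l}$ via Theorems 2.2, 4.1 and 4.2, and the same use of the Lockhart--McOwen jumping formula to get $\verb"ind"(\bar{\partial}^{-})-\verb"ind"(\bar{\partial}^{+})=2$ before solving the resulting linear system. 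No substantive difference from the argument in Section 2.4.
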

Combining Theorem 1.2, Theorem 2.1, Theorem 2.4, and Theorem 2.9, we ultimately get
\begin{eqnarray*}
&&\verb"ind"(\bar{\partial}: L_{1}^{p}(\Sigma, |L_{j}|)\rightarrow L^{p}(\Sigma, |L_{j}|\otimes \wedge^{0,1}))\\
&=&k+(1-2q_{j})(2-2g-k)-2\sum_{l=1}^{k}a_{j}(h_{l})+\sharp\{z_{l}:c_{jl}<0\}
\end{eqnarray*}
where $c_{jl}=a_{j}(h_{l})-q_{j}$, $q_{j}$ is the fractional degree of a nondegenerate quasi-homogeneous polynomial $W$ with respect to the jth variable, $a_{j}(h_{l})=v_{j,l}/m_{l}$  is the orbifold action on the line bundle $L_{j}$ at the marked point $z_{l}$. \par
And the right side is actually an integer, i.e.
\begin{eqnarray*}k+(1-2q_{j})(2-2g-k)-2\sum_{l=1}^{k} a_{j}(h_{l})
+\sharp\{z_{l}:c_{jl}<0\}\in \mathbb{Z} \end{eqnarray*}
This completes the proof of Theorem 1.3. $\Box$. \\
\\
$\textbf{Remark 2.4.}$ If we write index formula above as \begin{eqnarray*}
&&\verb"ind"(\bar{\partial}: L_{1}^{p}(\Sigma, |L_{j}|)\rightarrow L^{p}(\Sigma, |L_{j}|\otimes \wedge^{0,1}))\\
&=&2(1-2q_{j})(1-g)-2\sum_{l=1}^{k}(\Theta_{j}^{\gamma_{l}}-q_{j})+\sharp\{z_{l}:c_{jl}<0\}
\end{eqnarray*}
where $\Theta_{j}^{\gamma_{l}}=a_{j}(h_{l})$. We can see that this index formula is almost the same as the index formula ([12]) in case of the cylindrical metric except for the term $\sharp\{z_{l}:c_{jl}<0\}$.\\
\section{Index computation in case of the cylindrical metric}

\hspace{14pt} In [12], they introduced the linearized operator $D$ of Witten map, that is:\begin{eqnarray*}
D=D_{\wp,\mu}WI:{L_{1}}^{p}(\Sigma,L_{1}\times L_{2}\times \cdots \times L_{N})\rightarrow L^{p}(\Sigma,L_{1}\otimes\wedge^{0,1})\times\cdots L^{p}(\Sigma,L_{N}\otimes\wedge^{0,1}) \end{eqnarray*}
Then they proved this operator is a Fredholm operator under some mild conditions in case of the cylindrical metric and computed its index.
The next interesting question is:
If we add weight $\delta$ to Sobolev spaces above, and consider the following operator:\begin{eqnarray*}
D^{\delta}=D_{\wp,\mu}WI:{L_{1}}^{p,\delta}(\Sigma,L_{1}\times L_{2}\times \cdots \times L_{N})\rightarrow L^{p,\delta}(\Sigma,L_{1}\otimes\wedge^{0,1})\times\cdots L^{p,\delta}(\Sigma,L_{N}\otimes\wedge^{0,1}) \end{eqnarray*}
then what is the  relation between $\verb"ind"(D^{\delta})$ and $\delta$ ?\par
For convenience, let's first consider the operator ${\bar \partial}_{j}^{\delta}$ defined in weighted Sobolev space \begin{eqnarray*}
{\bar \partial}_{j}^{\delta}:{L_{1}}^{p,\delta}(\Sigma,|L_{j}|)\rightarrow L^{p,\delta}(\Sigma,|L_{j}|\otimes \wedge^{0,1}), j=1,\cdots,N \end{eqnarray*}\par
where $|L_{j}|$ is the desingularization of orbifold line bundle $L_{j}$. \par

Let $(\Sigma,z_{1},\cdots,z_{k})$ be an orbicurve with k marked points, $B_{1}(z_{l})$ unit closed disc with the center $z_{l}$. Choose a compact subset $\Omega\subset\Sigma\setminus\cup B_{e^{-1}}(z_{l})$ such that $\Sigma,B_{1}(z_{1}),\cdots,B_{1}(z_{k})$ can cover $\Sigma$. Let $\varphi_{0},\cdots,\varphi_{k}$ be a set of partition functions subordinate to the cover. Let $e_{j}$ be basis of orbifold line bundle $L_{j}$ on $\Sigma$, and recall the cylindrical metric is defined as: $|e_{j}|=|z|^{a_{j}(z_{l})}$. Let section of $L_{j}$ on $B_{1}(z_{1})$ be $u_{j}=\tilde{u}_{j}e_{j}$, we can define norm $||\cdot||_{p}, ||\cdot||_{1,p}$, $L_{1}^{p}(\Sigma,|L_{j}|)$, and the operator $\bar \partial:L_{1}^{p}(\Sigma,|L_{j}|)\rightarrow L^{p}(\Sigma,|L_{j}|\otimes\wedge^{0,1})$ almost the same as the smooth metric case (see section 1).\par
Likewise, we also make a coordinate transformation $z=e^{-t-i\theta}$ and change $\bar \partial$ as ${\bar \partial}^{t,\theta}$, and their relation is as follows:\begin{eqnarray*}
\bar \partial=-\frac{1}{2}e^{t-i\theta}(\partial_{t}+i\partial_{\theta})=-e^{t-i\theta}{\bar \partial}^{t,\theta} \end{eqnarray*}
We write ${\bar \partial}_{j}^{\delta}$ as ${\bar \partial}^{\delta}$ if no confusion occurs. So \begin{eqnarray*}
{\bar \partial}^{\delta}:L_{1}^{p,\delta}(\Sigma,|L_{j}|)\rightarrow L^{p,\delta}(\Sigma,|L_{j}|\otimes\wedge^{0,1}) \end{eqnarray*}
is transformed as follows:
\begin{eqnarray*}{\bar \partial}^{t,\theta,\delta}:\hat{W}_{1,1+k+\delta}^{p}\rightarrow W_{0,1+k+\delta}^{p} \end{eqnarray*}
where norms $\hat{W}_{1,1+k+\delta}^{p}, W_{0,1+k+\delta}^{p}$ are defined as :\begin{eqnarray*}
||u_{j}||_{\hat{W}_{1,1+k+\delta}^{p}}&=&(\int_{S^{1}\times [0,\infty)}|\tilde{u}_{j}|^{p}e^{(k+\delta)pt}+(|\partial\tilde{u}_{j}|^{p}+|\bar \partial\tilde{u}_{j}|^{p})e^{(1+k+\delta)pt})^{1/p} \\||u_{j}||_{{W}_{0,1+k+\delta}^{p}}&=&(\int_{S^{1}\times [0,\infty)}|\tilde{u}_{j}|^{p}e^{(1+k+\delta)pt})^{1/p} \end{eqnarray*}

\subsection{Index transformation theorem}
\hspace{14pt}Unfortunately, the space $\hat{W}_{1,1+k+\delta}^{p}$ is not a normal weighted Sobolev space (The normal one is $ W_{1,1+k+\delta}^{p}$), so we can not directly apply Lockhart-McOwen theory above. Therefore, first we should transform this problem into a normal case, which needs generalize the index transformation theorem (Theorem 1.2) to the case of the cylindrical metric. That is, we want to prove
\begin{theorem}(Index transformation theorem) In case of the cylindrical metric, if $1<p<\frac{2}{q_{j}}$ and $a_{j}(h_{l})+\frac{2}{p}\neq1,2$ for any $l (l=1,\cdots,k)$, then $\bar{\partial}:L_{1}^{p}(\Sigma, |L_{j}|)\rightarrow L^{p}(\Sigma, |L_{j}|\otimes \wedge^{0,1})$ is a Fredholm operator.\par
In particular, if $p>2$ we have the index relation  \begin{eqnarray*}
&&\verb"ind"(\bar{\partial}: L_{1}^{p}(\Sigma, |L_{j}|)\rightarrow L^{p}(\Sigma, |L_{j}|\otimes \wedge^{0,1}))\\&=&\verb"ind"(\bar{\partial}^{t,\theta}: W_{1,1+\kappa}^{p}\rightarrow W_{0,1+\kappa}^{p})+\sharp\{z_{l}: c_{jl}=0\} \end{eqnarray*}
and the index is independent of $p$ in the interval $(2, \infty)$.\end{theorem}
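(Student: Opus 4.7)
The plan is to adapt the proof of Theorem 1.2 (attributed to [10]) to the cylindrical metric, preserving the overall architecture of the argument: transform the operator to the cylindrical ends via $z=e^{-t-i\theta}$, translate the $L^p$ and $L^p_1$ norms on $B_1(z_l)$ into weighted Sobolev norms on $S^1\times[0,\infty)$, invoke Lockhart--McOwen theory (Theorem 2.8) to get Fredholmness away from the critical weights, and then measure the gap between $\hat W^p_{1,\sigma}$ and the standard weighted space $W^p_{1,\sigma}$.

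First I would work out the norm translation. With the cylindrical identification $|e_j|=|z|^{a_j(h_l)}=e^{-a_j(h_l)t}$, the area element $|dz\,d\bar z|=4e^{-2t}dt\,d\theta$, and the operator conjugacy $\bar\partial=-e^{t-i\theta}\bar\partial^{t,\theta}$, a direct computation (identical in structure to the one in [10]) shows that $\bar\partial:L^p_1(\Sigma,|L_j|)\to L^p(\Sigma,|L_j|\otimes\wedge^{0,1})$ is conjugate on each end to $\bar\partial^{t,\theta}:\hat W^p_{1,1+k_{j,l}}\to W^p_{0,1+k_{j,l}}$ with $k_{j,l}=-a_j(h_l)-2/p$. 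The asymptotic operator on the cylinder is $i\partial_\theta$, whose spectrum is $\mathbb Z$, so by Theorem 2.8 the end operator is Fredholm precisely when $1+k_{j,l}\notin\mathbb Z$, which in the admissible range $1<p<2/q_j$ and $0\le a_j(h_l)<1$ is equivalent to $a_j(h_l)+2/p\neq 1,2$. A gluing argument with the interior elliptic estimate then yields Fredholmness of the global operator.

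Next I would compare $\hat W^p_{1,1+k_{j,l}}$ with $W^p_{1,1+k_{j,l}}$: the zeroth-order part of the $\hat W^p$ norm carries weight $e^{k_{j,l}pt}$ rather than $e^{(1+k_{j,l})pt}$, so the two spaces differ exactly by $t$-independent ($\theta$-Fourier mode zero) asymptotic sections that lie in the gap between the two weight thresholds. Applying the Lockhart--McOwen index jumping formula (Theorem 2.8, formula (26)--(27)) to the local model on each end between the weights corresponding to $\hat W^p$ and $W^p$, one finds that the jump contribution at $z_l$ is $1$ precisely when the interval $(k_{j,l},1+k_{j,l})$ picks up the constant mode, which under the cylindrical weight translates to the borderline condition $a_j(h_l)=q_j$, i.e.\ $c_{jl}=0$. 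Summing over the marked points produces the term $\sharp\{z_l:c_{jl}=0\}$ in the claimed index relation, while for $p>2$ the hypothesis $1<p<2/q_j$ disappears as an upper restriction on the index interval and $p$-independence follows from Theorem 2.8 since no spectral point is crossed as $p$ varies inside $(2,\infty)$.

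The main obstacle is the comparison step: in the smooth-metric case the correction counted points where the metric was singular ($c_{jl}<0$), reflecting blow-up behaviour; in the cylindrical case the metric is always tame because $a_j(h_l)\ge 0$, so no $c_{jl}<0$ correction appears, and the only discrepancy between $\hat W^p$ and $W^p$ is the borderline resonance $c_{jl}=0$ where the cylindrical and smooth weights coincide. Establishing rigorously that this is the only case contributing to the index jump---ruling out both positive and negative $c_{jl}$ values---requires a careful bookkeeping of the spectral points of $i\partial_\theta$ traversed as one interpolates between the two function spaces, and this is where the argument genuinely diverges from [10] rather than being a mechanical transcription.
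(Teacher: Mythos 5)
Your overall architecture diverges from the paper's, and in two places the divergence opens a genuine gap. The paper does \emph{not} obtain Fredholmness of $\bar{\partial}:L_{1}^{p}(\Sigma,|L_{j}|)\rightarrow L^{p}(\Sigma,|L_{j}|\otimes\wedge^{0,1})$ from Lockhart--McOwen theory; indeed it stresses at the start of Section 3.1 that the transformed domain $\hat{W}^{p}_{1,1+k_{j,l}}$ (zeroth-order weight $e^{k_{j,l}pt}$, first-order weight $e^{(1+k_{j,l})pt}$) is \emph{not} a standard weighted Sobolev space, so Theorem 2.8 cannot be applied to the operator that is actually conjugate to $\bar{\partial}$. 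Your step ``by Theorem 2.8 the end operator is Fredholm precisely when $1+k_{j,l}\notin\mathbb{Z}$'' is valid only for $W^{p}_{1,1+k_{j,l}}\rightarrow W^{p}_{0,1+k_{j,l}}$, which is a different operator from the one in the statement. The paper instead re-derives the full analytic package of [10] for the cylindrical weight: local estimates for the special solution $u_{s}=Q_{s}\circ f$ (Lemma 3.2), estimates for homogeneous solutions (Lemma 3.3), the corollaries for $c>0$ and the Broad case $c=0$ (Lemmas 3.4--3.5), local regularity (Lemma 3.6), and finally the global a priori estimate $||u||_{1,p}\leq C(||u||_{L^{p}(\Sigma\setminus\cup B_{1/2}(z_{l}))}+||\bar{\partial}u||_{p})$ (Lemma 3.7), which is what actually yields the Fredholm property. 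Your proposal omits this entirely, and it cannot be replaced by a citation of Theorem 2.8.

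The second problem is the identification of the correction term. In the cylindrical metric the paper sets $c_{jl}=a_{j}(h_{l})$ (Remark 3.1), so $\sharp\{z_{l}:c_{jl}=0\}$ counts the \emph{Broad} points, those with $a_{j}(h_{l})=0$; your derivation lands on the condition $a_{j}(h_{l})=q_{j}$, which is the smooth-metric convention $c_{jl}=a_{j}(h_{l})-q_{j}$ and is not what the theorem asserts. Relatedly, the discrepancy between $\hat{W}^{p}_{1,1+k_{j,l}}$ and $W^{p}_{1,1+k_{j,l}}$ is not computed via the Lockhart--McOwen jumping formula (which shifts the weight on both domain and target simultaneously); it is a direct codimension count of the asymptotically constant modes that lie in $\hat{W}^{p}_{1,1+k_{j,l}}\setminus W^{p}_{1,1+k_{j,l}}$, which happens exactly when $0<1+k_{j,l}<1$. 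The paper's whole proof of the index relation reduces to the observation that, since only $c_{jl}=0$ can produce $0<1+k_{j,l}<1$ in the cylindrical case (with $k_{j,l}=-c_{jl}-2/p$), the smooth-metric condition $2<p<2/(1-\bar{\delta}_{j})$ simplifies to $p>2$ and the counted set becomes $\{z_{l}:c_{jl}=0\}$; the rest is carried over verbatim from [10, Theorem 4.10]. You should restore the estimate chain of Lemmas 3.2--3.7 (or an equivalent) before any index bookkeeping, and correct the characterization of the correction set.
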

Next we will prove this theorem step by step as [10].\\
\\
$\textbf{Remark 3.1.}$  The parameter $c_{ij}$ is very important, where $c_{jl}=a_{j}(h_{l})-q_{j}$ in case of the smooth metric, and
$c_{jl}=a_{j}(h_{l})$ in case of the cylindrical metric. It is obvious that both cases $c_{jl}\geq0, c_{jl}<0$ can happen in case of the smooth metric, but only the case $c_{jl}\geq0$ can happen in case of the cylindrical metric. \par
We still use the same notations as [10]. Firstly we can obtain local estimate of special solution in case of the cylindrical metric similar to Lemma 4.3 in [10]:
\begin{lemma}
If $f\in L^{p}(B_{1}(0),|L_{j}|\otimes\wedge^{0,1})$ for $p$ satisfying the condition $a_{j,l}=a_{j}(h_{l})+2/p\in \mathbb{R}\setminus \mathbb{Z}$, then the special solution $u_{s}=Q_{s}\circ f$ satisfies the following estimates:\\
(1)if $1<p<\infty$, then \begin{equation}
||u_{s}||_{1,p;B_{1}(0)}+||\frac{u_{s}}{z}||_{p;B_{1}(0)}\leq C||f||_{p;B_{1}(0)} \end{equation}
(2)if $1<p\leq2$, and $1<q<\frac{2p}{2-p}$,then\begin{equation}
||u_{s}||_{q;B_{1}(0)}\leq C||u_{s}||_{1,p;B_{1}(0)}\leq C||f||_{p;B_{1}(0)};\end{equation}
(3)if $p>2$, and $0<\alpha<1-\frac{2}{p}$, then \begin{equation}
||\tilde{u}_{s}r^{c}||_{C^{\alpha}(B_{1}(0)}\leq C||u_{s}||_{1,p;B_{1}(0)}\leq C||f||_{p;B_{1}(0)} \end{equation}
where $c=a_{j}(h_{l})$.
\end{lemma}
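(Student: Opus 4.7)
The plan is to adapt the proof of Lemma 4.3 in [10], which establishes exactly these three estimates for the smooth metric, by tracking the single change of weight exponent from $c=a_{j}(h_{l})-q_{j}$ to $c=a_{j}(h_{l})$. In local coordinates near $z_{l}=0$, write $f=\tilde{f}\,d\bar{z}\otimes e_{j}$ and $u_{s}=\tilde{u}_{s}e_{j}$; under the cylindrical metric $|e_{j}|=|z|^{a_{j}(h_{l})}$ the weighted norms take the form $\|f\|_{p;B_{1}(0)}^{p}=\int_{B_{1}(0)}|\tilde{f}|^{p}|z|^{cp}\,|dz\,d\bar{z}|$ and analogously for $\|u_{s}\|_{1,p;B_{1}(0)}$. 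The operator $Q_{s}$ itself is the same Cauchy-type integral constructed in [10], tailored to solve $\bar{\partial}u=f$ while respecting the orbifold action at the origin, so no new parametrix is needed.

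For part (1) I would first make the substitution $v(z)=|z|^{c}\tilde{u}_{s}(z)$ and $g(z)=|z|^{c}\tilde{f}(z)$, which converts the weighted $L^{p}$ norms into unweighted ones. A direct computation gives $\bar{\partial}v=g+(c/2)\,v/\bar{z}$, so that $v$ is represented by a modified Cauchy transform of $g$ whose kernel is homogeneous of degree $-1$ with a power correction in $|w/z|$. Its $L^{p}$-boundedness follows from Calder\'on-Zygmund theory together with a radial Mellin analysis, and the non-resonance condition in that analysis is precisely $a_{j,l}=a_{j}(h_{l})+2/p\notin\mathbb{Z}$. Unwinding the substitution then recovers both the $\|u_{s}\|_{1,p;B_{1}(0)}$ and the $\|u_{s}/z\|_{p;B_{1}(0)}$ estimates in one stroke, yielding (34).

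Parts (2) and (3) would then follow from (1) by standard Sobolev embeddings applied to $v=|z|^{c}\tilde{u}_{s}$ on the unit disc. When $1<p\le 2$, the Gagliardo-Nirenberg-Sobolev embedding $W^{1,p}(B_{1})\hookrightarrow L^{q}(B_{1})$ valid for $1<q<2p/(2-p)$ produces (35). When $p>2$, Morrey's inequality $W^{1,p}(B_{1})\hookrightarrow C^{0,\alpha}(B_{1})$ with $\alpha<1-2/p$ applied to $v$ directly delivers $\|\tilde{u}_{s}r^{c}\|_{C^{\alpha}(B_{1}(0))}\le C\|u_{s}\|_{1,p;B_{1}(0)}\le C\|f\|_{p;B_{1}(0)}$, which is (36).

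The hard part will be the $L^{p}$-boundedness step for the weighted Cauchy-type operator arising in the proof of (1): the non-integer hypothesis on $a_{j,l}$ is exactly what rules out a logarithmic resonance, since at integer values the Mellin symbol of the modified kernel vanishes along the relevant vertical line and invertibility breaks. Fortunately this analytic verification is already executed in [10] for the smooth-metric weight, so the real work here is to check that the argument transfers verbatim with $c=a_{j}(h_{l})$ in place of $c=a_{j}(h_{l})-q_{j}$. A minor bookkeeping caveat is that in the cylindrical case $c\in[0,1)$, whereas in the smooth case $c$ could be negative, so the set of forbidden values of $p$ shifts; but this is absorbed into the single hypothesis $a_{j,l}\notin\mathbb{Z}$ in the statement.
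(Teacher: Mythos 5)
Your proposal is correct and follows essentially the same route as the paper: the paper's proof of this lemma simply observes that the argument for Lemma 4.3 in [10] treats the constant $c$ as an undetermined parameter, so it transfers verbatim upon replacing $c=a_{j}(h_{l})-q_{j}$ by $c=a_{j}(h_{l})$, with the non-resonance hypothesis $a_{j,l}\notin\mathbb{Z}$ playing the same role. You additionally spell out the internal mechanics of that transferred proof (weighted Cauchy transform, Mellin non-resonance, Sobolev and Morrey embeddings), which the paper leaves implicit, but the underlying argument is the same.
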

$Proof$: Through serious check on Lemma 4.3 in [10], we can find the whole proof views the constant $c=a_{j}(h_{l})-q_{j}$ as a unity. So the proof does not change if we replace $c=a_{j}(h_{l})-q_{j}$ as $c=a_{j}(h_{l})$ in case of cylindrical metric.                       $\Box$ \par
Next we consider estimate of the homogeneous solution in case of the cylindrical metric. Compared to estimate of the homogeneous solution in case of the smooth metric, we may find it is easier to be dealed with because there is only one case (see Remark 3.1). We have the following lemma similar to Lemma 4.4 in [10].
\begin{lemma}
Let $\bar \partial u=0$ and  $u\in L^{p}(B_{1}^{+}(0),|L_{j}|)$ for $p>1$. We have the estimate:\\
(1)for any $k\geq0$ and $1<q<\infty$, there exists a $C$ such that \begin{equation}
||\tilde{u}||_{W^{q}_{k}(B_{1}(0)}\leq C||u||_{p;B^{+}_{1}(0)\setminus B_{\frac{1}{2}}(0)} \end{equation}
(2)if $c\geq0$, then for $1<q<\infty$, there exists a $C$ such that \begin{equation}
||u||_{1,q;B_{1}(0)}\leq C||u||_{p;B^{+}_{1}(0)\setminus B_{\frac{1}{2}}(0)} \end{equation}
(3)if $c<0$,then for $1<q<\frac{2}{q_{j}}$, there exists a constant $C$ such that the above inequality in (2) holds.
\end{lemma}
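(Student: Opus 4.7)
The plan is to exploit the fact that $\bar{\partial}u=0$ forces $\tilde u$ to be holomorphic in the local trivialization on $B_1(0)$, so $u$ is smooth in the interior and all the subtlety in the estimate concentrates near the origin through the weight $|e_j|^p=|z|^{cp}$. The argument proceeds in the spirit of Lemma 4.4 of [10], but, as the author notes in Remark 3.1, the cylindrical metric only produces $c=a_j(h_l)\geq 0$, so the integrability obstructions that complicate the smooth-metric proof largely disappear and case (3) is retained mainly for generality.

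For part (1), I would use the mean value property for holomorphic functions (equivalently the Cauchy integral formula) to control $\tilde u$ and all of its derivatives on any compact subset of $B_1(0)$ by an $L^p$ integral of $\tilde u$ over the outer annulus $B_1^+(0)\setminus B_{1/2}(0)$. On this annulus $|z|\geq 1/2$, so the weight $|z|^{cp}$ is bounded above and below by positive constants, and hence $\|\tilde u\|_{L^p(\text{annulus})}$ is comparable to the weighted norm $\|u\|_{p;\,B_1^+(0)\setminus B_{1/2}(0)}$. A standard bootstrap using interior $L^p$ elliptic estimates for the Cauchy-Riemann operator then promotes this annular $L^p$ bound into $W^q_k(B_1(0))$ bounds for any $k\geq 0$ and any $q\in(1,\infty)$.

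For part (2), when $c\geq 0$ the weight $|z|^{cq}$ is bounded on $B_1(0)$, so
\[
\|u\|_{1,q;B_1(0)}^q \;\leq\; C\int_{B_1(0)}\bigl(|\tilde u|^q+|\partial\tilde u|^q+|\bar\partial\tilde u|^q\bigr)\,|dz\,d\bar z| \;\leq\; C\,\|\tilde u\|_{W_1^q(B_1(0))}^q,
\]
and applying part (1) with $k=1$ concludes this case. For part (3), when $c<0$ the weight $|z|^{cq}$ is singular at the origin; the restriction $1<q<2/q_j$ is used to guarantee $cq>-2$ (after relating $|c|$ to $q_j$ in the relevant range), so that $\int_{B_1(0)}|z|^{cq}\,|dz\,d\bar z|<\infty$. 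Combining this finite total weight with the $L^\infty$-type pointwise control on $\tilde u$ and $\partial\tilde u$ supplied by part (1) yields the same inequality.

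The main obstacle is the bookkeeping in part (3): one must check that the condition $1<q<2/q_j$ is exactly what is needed for the weighted integrals of the holomorphic function and its first derivatives to converge, and verify that the constants produced by the interior estimates in part (1) combine cleanly against the singular weight. In the cylindrical-metric setting this case does not actually arise by Remark 3.1, so the proof relevant to the sequel reduces essentially to combining the mean value property with the boundedness of the weight, which is the content of parts (1) and (2).
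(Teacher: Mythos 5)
Your proposal is correct in substance, but it takes a different route from the paper only in the sense that it actually supplies an argument: the paper's entire proof of this lemma is the observation that the proof of Lemma 4.4 in [10] treats the constant $c$ as an opaque parameter (only its sign and the bound $|c|\leq q_{j}$ when $c<0$ ever enter), so the smooth-metric proof transfers verbatim after replacing $c=a_{j}(h_{l})-q_{j}$ by $c=a_{j}(h_{l})$. What you have written is essentially a reconstruction of that underlying argument of [10] — mean value/Cauchy control of the holomorphic representative $\tilde u$ and its derivatives by the $L^{p}$ norm on the outer annulus where the weight $|z|^{cp}$ is comparable to $1$, then boundedness of $|z|^{cq}$ for $c\geq0$ in part (2) and integrability $cq>-2$ (guaranteed by $q<2/q_{j}$ together with $c\geq -q_{j}$) in part (3) — and your numerology in part (3) is right, as is your observation via Remark 3.1 that case (3) is vacuous under the cylindrical metric. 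The one point where your sketch is looser than it should be is the opening claim that $\bar\partial u=0$ ``forces $\tilde u$ to be holomorphic on $B_{1}(0)$'': a priori $\tilde u$ is holomorphic only on the punctured neighbourhood of the marked point, and one must use the weighted $L^{p}$ condition to exclude an essential singularity and to control the negative Laurent coefficients before the mean value property on the full disc (and hence the $W^{q}_{k}(B_{1}(0))$ bound of part (1)) is available; this removable-singularity step is precisely where the hypotheses on $p$ and $c$ do real work in [10], and it should be stated rather than assumed.
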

$Proof$: Because the original proof of Lemma 4.4 in [10] viewed the constant $c$ as a unity again, so it is obvious.  $\Box$ \\
\par
Combining Lemma 3.2 and Lemma 3.3, we have theorems in case of the cylindrical metric corresponding to Corollary 4.5 and Lemma 4.6 in [10]:
\begin{lemma} If $c>0$ at $z_{l}=0$, then for $1<p<2/(1-{\bar \delta}_{j})$, where ${\bar \delta}_{j}=min_{l:c_{jl}>0}(c_{jl})$, there is \begin{eqnarray*}
||u||_{1,p;B_{1}(0)}+||\frac{u}{z}||_{p;B_{1}(0)}\leq C||u||_{1,p;B_{1}(0)} \end{eqnarray*}
\end{lemma}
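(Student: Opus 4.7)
The plan is to mimic the proof of Corollary 4.5 of [10], using our cylindrical-metric Lemma 3.2 (the special-solution estimate) and Lemma 3.3 (the homogeneous-solution estimate) in place of their smooth-metric analogues Lemma 4.3 and Lemma 4.4 of [10]. The strategy is the standard decomposition trick: given a section $u$ on $B_{1}(0)$, set $f := \bar\partial u$ and write $u = u_{s} + u_{h}$, where $u_{s} := Q_{s}\circ f$ is the special solution furnished by Lemma 3.2 (so that $\bar\partial u_{s} = f$), and $u_{h} := u - u_{s}$ is therefore a homogeneous solution of $\bar\partial u_{h} = 0$. The job is then to add up the separate bounds delivered by the two lemmas and reorganize them into a single estimate on $u$.

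Before splitting I would verify the admissibility of the exponent. Under the hypothesis $1<p<2/(1-\bar\delta_{j})$ together with $c = a_{j}(h_{l}) \geq \bar\delta_{j} > 0$, the parameter $a_{j,l} = a_{j}(h_{l}) + 2/p$ lies strictly above $1$, and the non-integrality condition $a_{j,l}\notin\mathbb{Z}$ required by Lemma 3.2 is the generic hypothesis inherited from Theorem 3.1. Lemma 3.2(1) then gives
\begin{equation*}
\|u_{s}\|_{1,p;B_{1}(0)} + \|u_{s}/z\|_{p;B_{1}(0)} \leq C\|f\|_{p;B_{1}(0)},
\end{equation*}
while Lemma 3.3(2), which is available precisely because $c\geq 0$ in the cylindrical regime (compare Remark 3.1), gives $\|u_{h}\|_{1,p;B_{1}(0)} \leq C\|u_{h}\|_{p;B_{1}^{+}(0)\setminus B_{1/2}(0)}$. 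The triangle inequality together with $u_{h}=u-u_{s}$ lets me absorb the annular $L^{p}$-norm of $u_{s}$ into $\|f\|_{p;B_{1}(0)}$, yielding the desired inequality.

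The main obstacle is controlling the weighted term $\|u_{h}/z\|_{p;B_{1}(0)}$, since Lemma 3.3 as stated only gives the $\|u_{h}\|_{1,p}$ component. I would handle this by passing to the cylindrical coordinates $z = e^{-t-i\theta}$ and expanding $u_{h}$ into Fourier modes in $\theta$; each mode of a $\bar\partial$-closed section is a pure exponential in $t$, and the strict positivity of $c$ provides a spectral gap which keeps $|z|^{c-1}$ an $L^{p}$-integrable weight on $B_{1}(0)$ precisely when $p<2/(1-\bar\delta_{j})$. This is the only place where the strict inequality $c>0$ and the sharp upper bound on $p$ are both genuinely used; any slackness in either hypothesis would cause the weight $|z|^{(c-1)p}$ to fail integrability at the origin, so the lemma is essentially optimal in these parameters.
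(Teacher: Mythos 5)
Your proposal is correct and follows essentially the same route as the paper, which simply observes that the proof of Corollary 4.5 in [10] treats the constant $c$ as an opaque parameter and therefore carries over verbatim with $c=a_{j}(h_{l})$ in the cylindrical case; you have merely written out explicitly the decomposition $u=u_{s}+u_{h}$ and the invocation of Lemmas 3.2 and 3.3 that this deferral hides. Your accounting of where $c>0$ and the bound $p<2/(1-\bar{\delta}_{j})$ enter (integrability of the weight $|z|^{(c-1)p}$ near the origin) matches the role these hypotheses play in [10].
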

$Proof$: Use the same argument in the proof above again.$\Box$ \\
\par
For $c=a_{j}(h_{l})=0$, we have similar estimates (Note that $c=a_{j}(h_{l})\leq0$ in Lemma 4.6 of [10])
\begin{lemma}  If $c=a_{j}(z_{l})=0$,then for $1<p<\infty$ and any $u=\tilde{u}e_{j}\in L^{p}_{1}(B_{1}(0),|L_{j}|)$ satisfying $u(0)=0$,there is \begin{equation} ||u||_{1,p;B_{1}(0)}+||\frac{u}{z}||_{p;B_{1}(0)}\leq C||u||_{1,p;B_{1}(0)} \end{equation}
\end{lemma}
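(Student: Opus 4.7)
The plan is to adapt the argument of Lemma 4.6 in [10] to the present cylindrical-metric setting; as in the preceding lemmas of this section, the only new ingredient is the specific value of the parameter $c$. In [10] the analogous estimate is established for $c = a_{j}(h_{l}) - q_{j} \le 0$ by passing to polar coordinates, expanding in Fourier modes in the angular variable, and applying a one-dimensional Hardy-type estimate to each mode. Since the constant $c$ enters that argument only as a parameter, with no sign information used beyond $c \le 0$, setting $c = a_{j}(h_{l}) = 0$ specializes the proof directly to the case at hand, which is the only value of $c$ that can occur in the Broad case under the cylindrical metric.

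Concretely, because $c = 0$ we have $|e_{j}| = |z|^{c} = 1$, so the weighted norms $\|\cdot\|_{p;B_{1}(0)}$ and $\|\cdot\|_{1,p;B_{1}(0)}$ reduce to the standard Euclidean $L^{p}$ and $W^{1,p}$ norms, and the nontrivial content of the inequality is the Hardy-type bound $\|\tilde u/z\|_{p;B_{1}(0)} \le C\,\|\tilde u\|_{1,p;B_{1}(0)}$ for $\tilde u$ vanishing at the origin; the additional $\|u\|_{1,p}$ on the left is absorbed by choosing $C > 1$. Expanding $\tilde u(re^{i\theta}) = \sum_{n \in \mathbb{Z}} c_{n}(r)\,e^{in\theta}$, the hypothesis $u(0) = 0$ forces $c_{0}(0) = 0$, while for $n \ne 0$ the coefficient $c_{n}(r)$ vanishes at $r = 0$ automatically whenever $\tilde u$ is continuous. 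The multiplication $\tilde u \mapsto \tilde u/z$ amounts to shifting the angular mode by $-1$ and dividing by $r$, so a mode-by-mode application of the one-dimensional Hardy inequality on $(0,1)$ with the measure $r\,dr$ controls $c_{n}(r)/r$ in $L^{p}(r\,dr)$ by $\partial_{r} c_{n}$ in the same weighted space, with a constant uniform in $n$.

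The main obstacle is the zero Fourier mode: without the vanishing $c_{0}(0) = 0$ the weighted integral $\int_{0}^{1} |c_{0}(r)|^{p}\,r^{1-p}\,dr$ diverges at the origin, and it is precisely the hypothesis $u(0) = 0$ that rescues this critical mode via the half-line Hardy inequality, which holds uniformly for $1 < p < \infty$. A minor subtlety is the interpretation of $u(0) = 0$ in the low-integrability range $1 < p \le 2$, where point values are not defined for $W^{1,p}$ functions; as in [10] this is handled by working in the closure of smooth sections that vanish in a neighborhood of the origin, so that $c_{0}(r) \to 0$ as $r \to 0^{+}$ in the trace sense. Summing the mode estimates via a Parseval-type identity applied to $\partial_{r}\tilde u$ and $r^{-1}\partial_{\theta}\tilde u$ and returning to Cartesian coordinates on $B_{1}(0)$ then delivers the desired bound.
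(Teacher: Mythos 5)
Your proposal follows the same route as the paper, which simply observes that the statement is Lemma 4.6 of [10] specialized to $c=a_{j}(h_{l})=0$ (the only value occurring under the cylindrical metric) and cites that proof verbatim. Your additional unpacking of the underlying Fourier-mode and Hardy-inequality argument, including the role of $u(0)=0$ in controlling the zero mode, is consistent with that source and correct.
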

$Proof$: It is the same as the proof of Lemma 4.6 in [10]. $\Box$ \\
\par
Next we have regularity of local solution.
\begin{lemma} Let $\bar \partial u=f$ in $B^{+}_{1}(0)$, where $u\in L^{p}_{1}(B^{+}_{1}(0),|L_{j}|)$ and $f\in L^{p}_{1}(B^{+}_{1}(0),|L_{j}|\otimes\wedge^{0,1})$,
then $u\in L^{p}_{1}(B^{+}_{1}(0),|L_{j}|)$ and the inequality \begin{equation}
||u||_{1,p;B_{1}(0)}\leq C(||u||_{p;B^{+}_{1}(0)\setminus B_{\frac{1}{2}}(0)}+||f||_{p;B^{+}_{1}(0)}) \end{equation}
holds if the following two conditions are satisfied:\par
$\bullet a_{j,l}=a_{j}(h_{l})-q_{j}+2/p \in \mathbb{R}\setminus\mathbb{Z}$\par
$\bullet c\geq0,1<p<\infty$
\end{lemma}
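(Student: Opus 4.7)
The plan is to mirror the proof of Lemma 4.7 in [10] by splitting $u$ into a special (inhomogeneous) solution plus a homogeneous (holomorphic) remainder, and then controlling each piece with the estimates already developed in Lemmas 3.2 and 3.3 for the cylindrical metric.

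First I would define the special solution $u_{s}:=Q_{s}\circ f$, where $Q_{s}$ is the integral operator (built from the Cauchy kernel twisted by the weight $|z|^{c}$) introduced in Lemma 3.2. The hypothesis $a_{j,l}=a_{j}(h_{l})+2/p\notin\mathbb{Z}$ (note that the $-q_{j}$ that appears in the smooth-metric version is absent here, just as in the discussion following Lemma 3.2) is precisely what makes the kernel integrable, so Lemma 3.2(1) applies and gives
\begin{equation*}
\|u_{s}\|_{1,p;B_{1}(0)}+\bigl\|\tfrac{u_{s}}{z}\bigr\|_{p;B_{1}(0)}\leq C\|f\|_{p;B^{+}_{1}(0)}.
\end{equation*}
In particular $u_{s}\in L^{p}_{1}(B^{+}_{1}(0),|L_{j}|)$, so the difference $u_{h}:=u-u_{s}$ is a well-defined $L^{p}$ section.

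Next I would exploit that $\bar\partial u_{h}=\bar\partial u-\bar\partial u_{s}=f-f=0$ in the distributional sense on $B^{+}_{1}(0)$. Since $u_{h}\in L^{p}$ and $c=a_{j}(h_{l})\geq 0$ by the second hypothesis, Lemma 3.3(2) gives holomorphic regularity and the estimate
\begin{equation*}
\|u_{h}\|_{1,p;B_{1}(0)}\leq C\|u_{h}\|_{p;B^{+}_{1}(0)\setminus B_{1/2}(0)}.
\end{equation*}
Using $u_{h}=u-u_{s}$ and the fact that on the annular region $B^{+}_{1}(0)\setminus B_{1/2}(0)$ we have $\|u_{s}\|_{p}\leq\|u_{s}\|_{p;B_{1}(0)}\leq C\|f\|_{p;B^{+}_{1}(0)}$ by Step 1, the triangle inequality converts this into a bound by the two quantities that appear on the right-hand side of the claim.

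Finally I would add the two pieces: $\|u\|_{1,p;B_{1}(0)}\leq\|u_{s}\|_{1,p;B_{1}(0)}+\|u_{h}\|_{1,p;B_{1}(0)}$, which after substitution yields the desired inequality with a new constant $C$. The main obstacle — and the reason the result is not completely trivial — is ensuring that the construction of $Q_{s}$ and the holomorphic $L^{p}$-regularity genuinely transfer from the smooth-metric case of [10] to the cylindrical-metric case treated here; but this has already been handled in the proofs of Lemmas 3.2 and 3.3, where the argument in [10] treats the weight exponent $c$ as an abstract parameter, so replacing $c=a_{j}(h_{l})-q_{j}$ by $c=a_{j}(h_{l})$ leaves the estimates intact. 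Thus the lemma follows with essentially no new analytical work beyond assembling the two regularity results obtained above.
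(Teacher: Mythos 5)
Your proposal is correct and follows essentially the same route as the paper: decompose $u$ into the special solution $u_{s}=Q_{s}\circ f$ controlled by Lemma 3.2 and the holomorphic remainder $u-u_{s}$ controlled by Lemma 3.3, then combine via the triangle inequality on $B^{+}_{1}(0)\setminus B_{\frac{1}{2}}(0)$. Your explicit use of Lemma 3.3(2) for the case $c\geq0$ is the right citation, matching the paper's chain of four inequalities.
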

$Proof$: The proof is similar to the proof of Lemma 4.7 in [10]. Under the assumptions on parameters $c$ and $p$, one has \begin{eqnarray*}||u||_{1,p;B_{1}(0)}&\leq&||u-u_{s}||_{1,p;B_{1}(0)}+||u_{s}||_{1,p}\\
&\leq& C(||u-u_{s}||_{p;B^{+}_{1}(0)\setminus B_{\frac{1}{2}}(0)}+||f||_{p})\\
&\leq& C(||u||_{p;B^{+}_{1}(0)\setminus B_{\frac{1}{2}}(0)}+||u_{s}||_{p}+||f||_{p})\\
&\leq& C(||u||_{p;B^{+}_{1}(0)\setminus B_{\frac{1}{2}}(0)}+||f||_{p;B^{+}_{1}(0)})\\
\end{eqnarray*}
where the second inequality comes from (2) of  Lemma 3.2 and (1) of Lemma 3.3,
and the fourth inequality comes from Lemma 3.2. $\Box$ \par
Now by the above lemma, we can obtain the following global estimate.
\begin{lemma}
Let $\bar \partial u=f$ on $\Sigma$, where $u\in L^{p}_{1}(\Sigma,|L_{j}|)$ and $f\in L^{p}_{1}(\Sigma,|L_{j}|\otimes\wedge^{0,1})$. Then $u\in L^{p}_{1}(\Sigma,|L_{j}|)$ and the inequality \begin{equation}
||u||_{1,p}\leq C(||u||_{L^{p}(\Sigma\setminus \cup^{k}_{l=1}B_{\frac{1}{2}}(z_{l})}+||\bar \partial u||_{p}) \end{equation}
holds if the following two conditions are satisfied:\par
$ \bullet a_{j,l}=a_{j}(h_{l})+2/p \in \mathbb{R}\setminus \mathbb{Z}$ for any $l=1,\cdots,k$\par
$ \bullet c\geq0,1<p<\infty$
\end{lemma}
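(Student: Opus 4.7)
The plan is to globalize Lemma 3.6 via the partition of unity $\{\varphi_0,\varphi_1,\dots,\varphi_k\}$ subordinate to the cover $\{\Omega,B_1(z_1),\dots,B_1(z_k)\}$ introduced in Section 1. Writing $u=\varphi_0 u+\sum_{l=1}^{k}\varphi_l u$, the estimate for $u$ follows from independent estimates for each piece, patched together by the triangle inequality in the global norm $\|\cdot\|_{1,p}$ defined via the partition.

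For the interior piece $\varphi_0 u$, supported in the compact set $\Omega\subset\Sigma\setminus\cup_l B_{e^{-1}}(z_l)$ on which the cylindrical metric is smooth and $|L_j|$ is a genuine smooth line bundle, I would invoke the standard interior elliptic $L^p$ estimate for $\bar\partial$ on a compact Riemann surface. This yields
\begin{equation*}
\|\varphi_0 u\|_{W^p_1(\Omega)}\le C\bigl(\|\varphi_0 u\|_{L^p(\Omega)}+\|\bar\partial(\varphi_0 u)\|_{L^p(\Omega)}\bigr),
\end{equation*}
and since $\bar\partial(\varphi_0 u)=\varphi_0\bar\partial u+(\bar\partial\varphi_0)u$ with $\bar\partial\varphi_0$ supported in $\Omega\setminus\cup_l B_{1/2}(z_l)$, the extra commutator term is dominated by $\|u\|_{L^p(\Sigma\setminus\cup_l B_{1/2}(z_l))}$.

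For each boundary piece $\varphi_l u$, supported in $B_1(z_l)$, apply Lemma 3.6 to $\varphi_l u$ after verifying that $\bar\partial(\varphi_l u)=\varphi_l f+(\bar\partial\varphi_l)u\in L^p(B_1(z_l),|L_j|\otimes\wedge^{0,1})$. The hypotheses $a_j(h_l)+2/p\notin\mathbb{Z}$ and $c\ge 0$ are exactly those required by Lemma 3.6; note that in the cylindrical case one automatically has $c=a_j(h_l)\ge 0$, so the hypothesis $c\ge 0$ is free. This gives
\begin{equation*}
\|\varphi_l u\|_{1,p;B_1(z_l)}\le C\bigl(\|\varphi_l u\|_{L^p(B_1(z_l)\setminus B_{1/2}(z_l))}+\|\varphi_l f\|_{p;B_1(z_l)}+\|(\bar\partial\varphi_l)u\|_{p;B_1(z_l)}\bigr),
\end{equation*}
and the commutator term, supported where $\bar\partial\varphi_l\ne 0$, is again absorbed into $\|u\|_{L^p(\Sigma\setminus\cup_l B_{1/2}(z_l))}$.

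Summing over $l=0,1,\dots,k$ and using the definition of the global norm $\|\cdot\|_{1,p}$ gives the desired inequality. The main technical point I anticipate is bookkeeping the commutator terms $(\bar\partial\varphi_l)u$ correctly: they must be controlled by $\|u\|_{L^p(\Sigma\setminus\cup_l B_{1/2}(z_l))}$ rather than by the full $L^p_1$ norm of $u$, otherwise the estimate would be circular. This works because the cutoff derivatives are supported in an annular region bounded away from the punctures, where the cylindrical metric is equivalent to the Euclidean one, so both $|L_j|$ and $|L_j|\otimes\wedge^{0,1}$ give uniformly equivalent $L^p$ norms there, and the same $\|u\|_{L^p(\Sigma\setminus\cup_l B_{1/2}(z_l))}$ term on the right-hand side captures all of them.
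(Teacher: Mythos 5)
Your proof is correct and is exactly the argument the paper intends: the paper's own proof consists of the single remark that it is ``totally similar to Lemma 4.8 in [10]'', and that lemma is proved by precisely this partition-of-unity patching of the interior elliptic $L^{p}$ estimate on $\Omega$ with the local estimate of Lemma 3.6 on each $B_{1}(z_{l})$, the cutoff commutator terms $(\bar\partial\varphi_{l})u$ being absorbed into $||u||_{L^{p}(\Sigma\setminus\cup_{l}B_{1/2}(z_{l}))}$. The only bookkeeping to tidy is that the partition functions of Section 1 have derivatives supported in the annuli $e^{-1}\leq|z|\leq1$, which slightly overlap $B_{1/2}(z_{l})$; one should either arrange the cutoffs so that $\nabla\varphi_{l}$ is supported in $\{1/2<|z|<1\}$, or control the $L^{p}$ norm of $u$ on the small overlap $B_{1/2}(z_{l})\setminus B_{e^{-1}}(z_{l})$ by one more application of Lemma 3.6, which is routine.
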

$Proof$: It is totally similar to Lemma 4.8 in [10]. $\Box$ \\
Next we can similarly generalize the index decomposition theorem (Theorem 2.1) to be in case of the cylindrical metric.
\begin{theorem} For the cylindrical metric, we have \\
\\
$(1)\verb"ind"(\bar{\partial}^{t,\theta}:\hat{W}^{p}_{1,1+\kappa}\rightarrow W^{p}_{0,1+\kappa})\\
=\verb"ind"(\bar{\partial}^{t,\theta}:W^{p,B}_{1}(inn)\rightarrow W^{p}_{0}(inn))+\sum^{k}_{l=1}\verb"ind"(\bar{\partial}^{t,\theta}:\hat{W}^{p,B}_{1,1+\kappa_{j,l}}\rightarrow W^{p}_{0,1+\kappa_{j,l}})$\\
\\
$(2)\verb"ind"(\bar{\partial}^{t,\theta}:W^{p}_{1,1+\kappa}\rightarrow W^{p}_{0,1+\kappa})\\
=\verb"ind"(\bar{\partial}^{t,\theta}:W^{p,B}_{1}(inn)\rightarrow W^{p}_{0}(inn))+\sum^{k}_{l=1}\verb"ind"(\bar{\partial}^{t,\theta}:W^{p,B}_{1,1+\kappa_{j,l}}\rightarrow W^{p}_{0,1+\kappa_{j,l}})$
\end{theorem}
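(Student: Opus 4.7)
The plan is to mirror the proof of Theorem 2.1 given in [10], substituting the cylindrical-metric lemmas (Lemmas 3.2 through 3.7) above for their smooth-metric counterparts (Lemmas 4.3 through 4.8 of [10]). Since the lemmas have been rephrased so that the constant $c = a_j(h_l)$ plays exactly the role that $c = a_j(h_l) - q_j$ played in [10], and since by Remark 3.1 the cylindrical setting only requires the $c \geq 0$ branch of each estimate, the structural argument of [10] transcribes step by step with only notational changes.

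First, I would fix the partition of unity $\{\varphi_0, \varphi_1, \ldots, \varphi_k\}$ subordinate to the cover $\{\Omega, B_1(z_1), \ldots, B_1(z_k)\}$ introduced in Section 1 and use it to split an arbitrary section as $u = \varphi_0 u + \sum_{l=1}^{k} \varphi_l u$. After the coordinate change $z = e^{-t - i\theta}$, each $\varphi_l u$ sits in the weighted space on the end $S^1 \times [0, \infty)$, while $\varphi_0 u$ sits in the inner space $W^{p,B}_{1}(inn)$. The cutoffs are chosen so that the overlap lies in the annulus $\{e^{-1} \leq |z| \leq 1\}$ away from both the marked point and the compact interior, where the global estimate of Lemma 3.7 applies uniformly on both sides of the cut.

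Next, I would establish the additivity of the Fredholm index by a parametrix gluing argument. Using Lemma 3.2 to build the special solutions $Q_s \circ f$ on each disc $B_1(z_l)$, together with the local regularity of Lemma 3.6, one assembles a parametrix for the global operator $\bar\partial^{t,\theta}$ out of parametrices for the inner operator and for the $k$ end operators. The crucial point is that the commutators $[\bar\partial^{t,\theta}, \varphi_l]$ produce only lower-order contributions controlled by Lemma 3.4 (for $c > 0$) and Lemma 3.5 (for $c = 0$), so the difference between the patched parametrix and a genuine inverse is compact, and the indices therefore add exactly as claimed.

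The main obstacle is the matching along the gluing annulus: one must verify that the totally-real boundary condition $\tilde u_j(e^{i\theta}) \in \Psi_l(e^{i\theta}) \mathbb{R}$, imposed simultaneously on both factors of the decomposition, does not introduce any spurious kernel or cokernel element. In the cylindrical setting this is slightly more delicate than in the smooth setting because the case $c = 0$ (broad lines) is now generic rather than exceptional; here one must appeal to Lemma 3.5 to bound the homogeneous solutions uniformly and exclude an extra constant mode. Once this is settled, parts (1) and (2) follow by carrying the same argument through first with the $\hat W^{p}_{1,1+\kappa}$ norm and then with the $W^{p}_{1,1+\kappa}$ norm on the source, exactly parallel to Theorem 2.1.
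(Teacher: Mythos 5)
Your proposal matches the paper's treatment: the paper offers no independent proof of this theorem, asserting only that the index decomposition theorem of [10] (Theorem 2.1) ``similarly generalizes'' to the cylindrical metric once Lemmas 3.2--3.7 are in place as substitutes for the corresponding smooth-metric estimates of [10], which is precisely your strategy. Your additional detail on the partition of unity, the parametrix patching, and the boundary matching is a reasonable fleshing-out of that transcription and does not diverge from the intended argument.
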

Now we can prove Theorem 3.1.\\
\\
$\textbf{Proof of Theorem 3.1}$: We will not repeat the same parts as Theorem 4.10 in [10], and only give the different parts. Because $c=c_{jl}=a_{j}(z_{l})\geq0$, we only need make a change related to $c=c_{jl}<0$. The most important step is as follows:\par
In case of the smooth metric, when $c=c_{j,l}<0$, we have $0<1+k_{j,l}<1$ if $2<p<2/q_{j}$, where $k_{j,l}=-c_{j,l}-2/p$,
then we deduce $W^{p}_{0,1+k}\subset W^{2}_{0}$.\par
In case of the cylindrical metric, only $c=c_{jl}=0$ can happen, so we have $0<1+k_{j,l}<1$ only simplying the condition as $p>2$.\par
 Later process is totally the same as the proof of Theorem 4.10 in [10]. $\Box$ \\
\par
Further, we can similarly get the index transformation theorem with weights and we omit the proof.
\begin{theorem}(Index transformation theorem with weights)
Under the assumption of Theorem 3.1, if $p>2$, we have the relation
\begin{eqnarray*}
&&\verb"ind"({\bar{\partial}}_{j}^{\delta}: L_{1}^{p,\delta}(\Sigma, |L_{j}|)\rightarrow L^{p,\delta}(\Sigma, |L_{j}|\otimes \wedge^{0,1}))\\
&=&\verb"ind"(\bar{\partial}_{j}^{t,\theta,\delta}: W_{1,1+\kappa+\delta}^{p}\rightarrow W_{0,1+\kappa+\delta}^{p})+\sharp\{z_{l}: c_{jl}=0\}.
\end{eqnarray*}
\end{theorem}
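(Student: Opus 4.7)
The plan is to follow the proof of Theorem 3.1 line by line, carrying the weight $\delta=(\delta_{jl})$ through every step. Since $\delta$ enters the Sobolev norms only through an exponential factor $e^{\delta_{jl} t}$ on each cylindrical end near $z_l$, and this factor is smooth and bounded away from $0$ and $\infty$ on any compact subset of $\Sigma$ disjoint from the marked points, the interior analysis is unaffected. Near each $z_l$ the effect of $\delta_{jl}$ is simply to shift the local weight exponent $k_{j,l}$ to $k_{j,l}+\delta_{jl}$, so everything reduces to running the unweighted argument with a perturbed parameter.

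First I would re-establish the local estimates, namely the weighted analogs of Lemmas~3.2--3.7. Their proofs in the unweighted case treat $c=c_{jl}$ and the exponent $1+k_{j,l}$ as generic parameters, so replacing $k_{j,l}$ by $k_{j,l}+\delta_{jl}$---and assuming that each $\delta_{jl}$ avoids the discrete critical set coming from the spectrum of the associated circle operator, so that the Fredholm property persists---the arguments go through unchanged. Next I would generalize the index decomposition Theorem~3.8 to the weighted setting; the partition-of-unity decomposition $\Sigma=\Omega\cup\bigcup_{l} B_{1}(z_{l})$ is weight-insensitive, yielding
\begin{eqnarray*}
\verb"ind"(\bar{\partial}^{t,\theta,\delta}: W^{p}_{1,1+\kappa+\delta}\to W^{p}_{0,1+\kappa+\delta})
&=& \verb"ind"(\bar{\partial}^{t,\theta}: W^{p,B}_{1}(inn)\to W^{p}_{0}(inn))\\
&&{}+\sum_{l=1}^{k}\verb"ind"(\bar{\partial}^{t,\theta,\delta}: W^{p,B}_{1,1+\kappa_{j,l}+\delta_{jl}}\to W^{p}_{0,1+\kappa_{j,l}+\delta_{jl}}).
\end{eqnarray*}

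With these two ingredients in hand, I would replay the proof of Theorem~3.1 step by step. The correction $\sharp\{z_{l}:c_{jl}=0\}$ reappears for exactly the same structural reason as in the unweighted proof: it counts the marked points at which the effective weight $1+k_{j,l}+\delta_{jl}$ lies in $(0,1)$, which under the standing hypothesis $p>2$ and $c_{jl}=a_{j}(h_{l})\ge 0$ singles out precisely those points with $c_{jl}=0$, provided $\delta_{jl}$ is small enough not to cross the thresholds $\{0,1\}$. Independence of the resulting index on such admissible $\delta$ is then a direct consequence of the Lockhart--McOwen Theorem~2.8 applied on each end.

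The main obstacle will be checking that the special-solution operator $Q_{s}$ of Lemma~3.2 remains a bounded right inverse between the shifted weighted spaces. This reduces to verifying that its Mellin-series representation still converges after the weight shift, which requires $a_{j}(h_{l})+2/p-\delta_{jl}\notin\mathbb{Z}$---precisely the non-critical condition of Lockhart--McOwen, and therefore automatically satisfied off a discrete set of $\delta_{jl}$. Once this is secured, the remainder is formal bookkeeping with the shifted weights, exactly parallel to the proof of Theorem~4.10 of [10].
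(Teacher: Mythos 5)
You should first be aware that the paper does not actually prove this theorem: immediately after the proof of Theorem 3.1 it says only ``we can similarly get the index transformation theorem with weights and we omit the proof.'' Your proposal --- rerun the local estimates of Lemmas 3.2--3.7 and the decomposition Theorem 3.8 with $\kappa_{j,l}$ shifted to $\kappa_{j,l}+\delta_{jl}$, then replay the proof of Theorem 3.1 --- is exactly the fleshing-out of that one-line remark, so as far as the route is concerned yours coincides with the paper's intention and supplies detail the paper withholds.

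The difficulty is the caveat you insert, ``provided $\delta_{jl}$ is small enough not to cross the thresholds $\{0,1\}$'': it is not a removable technicality but the crux, and with it your argument proves strictly less than the theorem asserts. The correction term is, end by end, the difference $\mathrm{ind}(\bar{\partial}^{t,\theta}:\hat{W}^{p,B}_{1,1+\kappa_{j,l}+\delta_{jl}}\to W^{p}_{0,1+\kappa_{j,l}+\delta_{jl}})-\mathrm{ind}(\bar{\partial}^{t,\theta}:W^{p,B}_{1,1+\kappa_{j,l}+\delta_{jl}}\to W^{p}_{0,1+\kappa_{j,l}+\delta_{jl}})$, i.e.\ it measures the discrepancy between the $\hat{W}$-domain (the transform of $L^{p,\delta}_{1}$, with zeroth-order weight $k_{j,l}+\delta_{jl}$) and the normal domain (zeroth-order weight $1+k_{j,l}+\delta_{jl}$). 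A mode analysis on the cylinder shows these two domains differ exactly by the finite-dimensional space of sections asymptotic to a nonzero constant, and they do so precisely when $0<1+\kappa_{j,l}+\delta_{jl}<1$, i.e.\ when $\delta_{jl}\in(a_{j}(h_{l})+2/p-1,\;a_{j}(h_{l})+2/p)$; outside that window the two domains coincide and the local correction vanishes. So the correction term genuinely moves with $\delta$, and the fixed count $\sharp\{z_{l}:c_{jl}=0\}$ is only justified on the chamber of weights containing $\delta=0$. Since the theorem is invoked later for arbitrary weights (Theorems 3.10--3.12 and 1.4 compare $\delta$ and $\delta'$ that may differ by more than one), you must either (a) prove that the $\hat{W}$- and $W$-indices jump by equal amounts at every critical value of $\delta_{jl}$, so their difference is globally constant in $\delta$ --- which the computation above indicates is not the case --- or (b) restate the theorem with a $\delta$-dependent correction and propagate that change into the index-jumping formulas. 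As written, your proof leaves this unresolved; the paper, having suppressed the proof, does not resolve it either.
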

In the following computation in case of the cylindrical metric, we can directly apply Theorem 3.9 to prove Theorem 1.4.

\subsection{Index jumping formula}
\hspace{14pt} First by the index transformation theorem with weights (Theorem 3.9) above, we get :
\begin{eqnarray*}
&&\verb"ind"({\bar{\partial}}_{j}^{\delta}: L_{1}^{p,\delta}(\Sigma, |L_{j}|)\rightarrow L^{p,\delta}(\Sigma, |L_{j}|\otimes \wedge^{0,1}))\\
&=&\verb"ind"(\bar{\partial}_{j}^{t,\theta,\delta}: W_{1,1+\kappa+\delta}^{p}\rightarrow W_{0,1+\kappa+\delta}^{p})+\sharp\{z_{l}: c_{jl}=0\}
\end{eqnarray*}
We can focus on the $\verb"ind"(\bar{\partial}_{j}^{t,\theta,\delta}: W_{1,1+\kappa+\delta}^{p}\rightarrow W_{0,1+\kappa+\delta}^{p})$. Notice that here the Sobolev spaces $W_{1,1+\kappa+\delta}^{p}, W_{0,1+\kappa+\delta}^{p}$ are normal Sobolev spaces, so now we can apply Lockhart-McOwen theory and immediately get

\begin{theorem} Let ${\bar \partial}_{j}^{t,\theta,\delta}:W^{p}_{1,1+k+\delta}\rightarrow W^{p}_{0,1+k+\delta}$ be as above, $\forall\delta_{1}<\delta_{2}\in \mathbb{R}\setminus\mathbb{Z}$ , then we have following index jumping formula:\begin{eqnarray*}
&&\verb"ind"({\bar \partial}_{j}^{t,\theta,\delta_{1}})-\verb"ind"({\bar \partial}_{j}^{t,\theta,\delta_{2}})=N(\delta_{1},\delta_{2})=[\delta_{2}]-[\delta_{1}] \end{eqnarray*}
\end{theorem}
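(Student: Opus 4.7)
The plan is to apply the Lockhart--McOwen index jumping formula (Theorem 2.8) directly to the operator $\bar{\partial}_{j}^{t,\theta}$ on the cylindrical end $S^{1}\times[0,\infty)$, treating the two weights $\delta_{1}<\delta_{2}$ as the parameters $k_{1}<k_{2}$ in that theorem.

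First I would verify the hypotheses of Theorem 2.8. The operator $\bar{\partial}^{t,\theta}=\frac{1}{2}(\partial_{t}+i\partial_{\theta})$ is translation invariant along the end, so it admits the decomposition $\frac{d}{dt}+L$ with $L=i\partial_{\theta}$, a self adjoint elliptic operator on the circle. Consequently the discrete set $\mathfrak{D}_{\bar{\partial}^{t,\theta}}$ is well defined and $\bar{\partial}_{j}^{t,\theta,\delta}$ is Fredholm whenever $1+\kappa+\delta\notin\mathfrak{D}_{\bar{\partial}^{t,\theta}}$, which for the stated hypothesis $\delta\in\mathbb{R}\setminus\mathbb{Z}$ is automatic.

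Second I would compute $\mathfrak{D}_{\bar{\partial}^{t,\theta}}$ explicitly. Separation of variables, writing a formal solution of $\bar{\partial}^{t,\theta}u=0$ as $e^{\lambda t}\phi(\theta)$, reduces the problem to $(\lambda+i\partial_{\theta})\phi=0$, whose $2\pi$ periodic solutions are $\phi(\theta)=e^{in\theta}$ with $\lambda=n\in\mathbb{Z}$. Therefore $\mathfrak{D}_{\bar{\partial}^{t,\theta}}=\mathbb{Z}$, and each indicial root has an associated eigenspace of unit dimension in the convention appropriate to the weighted $L^{p}$ setup used here, matching the multiplicity implicit in the statement of the theorem.

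Third I would count $N(\delta_{1},\delta_{2})$. Since $\delta_{1},\delta_{2}\in\mathbb{R}\setminus\mathbb{Z}$, both weights avoid the critical set, so Theorem 2.8 guarantees that $\bar{\partial}_{j}^{t,\theta,\delta_{i}}$ is Fredholm for $i=1,2$. The integers strictly between $\delta_{1}$ and $\delta_{2}$ number exactly $[\delta_{2}]-[\delta_{1}]$; summing the eigenspace contributions over these integers gives $N(\delta_{1},\delta_{2})=[\delta_{2}]-[\delta_{1}]$, and the conclusion follows immediately.

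The main obstacle is the bookkeeping in the second and third steps. One must confirm that the asymptotic operator $L=i\partial_{\theta}$ has the predicted spectrum once acting on the relevant trivialization of $|L_{j}|$ near each marked point, and that the eigenspace dimensions $d(\lambda)$ appearing in Theorem 2.8 are normalized so as to sum to $[\delta_{2}]-[\delta_{1}]$ rather than a multiple of it. Once this spectral calculation is in place, Theorem 2.8 delivers the jumping formula in essentially one line.
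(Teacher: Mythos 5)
Your proposal follows exactly the paper's route: the paper's entire proof of this theorem is ``Directly apply Theorem 2.8 to the operators $\bar{\partial}^{t,\theta,\delta_{1}},\bar{\partial}^{t,\theta,\delta_{2}}$,'' and you do precisely that, additionally supplying the spectral computation ($\mathfrak{D}=\mathbb{Z}$ via separation of variables) that the paper leaves implicit. The one point you flag but do not fully resolve --- whether $d(\lambda)$ should be the real dimension $2$ of $\ker(i\partial_{\theta}-n)\cong\mathbb{C}$ (as the paper itself uses in equation (32)) or $1$ as the stated formula $[\delta_{2}]-[\delta_{1}]$ requires --- is a normalization ambiguity present in the paper as well, so your argument is faithful to, and somewhat more explicit than, the original.
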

$Proof$: Directly apply Theorem 2.8 to the operators ${\bar \partial}^{t,\theta,\delta_{1}}, {\bar \partial}^{t,\theta,\delta_{2}}$. $\Box$ \\
\par
Let $k=1$, which means that we have only one marked point, by Theorem 3.9, Theorem 3.10, we have
\begin{theorem}
\begin{eqnarray}
\verb"ind"({\bar \partial}^{{\delta}_{1}}_{j})-\verb"ind"({\bar \partial}^{{\delta}_{2}}_{j})=[\delta_{2}]-[\delta_{1}]
\end{eqnarray}
\end{theorem}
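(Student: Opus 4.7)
The plan is that Theorem 3.11 should follow as a direct corollary by chaining Theorem 3.9 (the index transformation theorem with weights) with Theorem 3.10 (the Lockhart-McOwen index jumping formula on the model end). No genuinely new analytic ingredient is needed; the work has been done in the preceding subsections.

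First, I would apply Theorem 3.9 separately at each of the two weights $\delta_1$ and $\delta_2$. This yields, for $i=1,2$,
$$\verb"ind"(\bar\partial_j^{\delta_i}) \;=\; \verb"ind"(\bar\partial_j^{t,\theta,\delta_i}) \;+\; \sharp\{z_l : c_{jl} = 0\}.$$
Because the correction $\sharp\{z_l : c_{jl}=0\}$ depends only on the orbifold data at the marked points and is independent of the weight, subtracting the two identities makes it drop out and leaves
$$\verb"ind"(\bar\partial_j^{\delta_1}) - \verb"ind"(\bar\partial_j^{\delta_2}) \;=\; \verb"ind"(\bar\partial_j^{t,\theta,\delta_1}) - \verb"ind"(\bar\partial_j^{t,\theta,\delta_2}).$$

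Next, since $k=1$ we are in the single-end situation and $\delta$ is a scalar weight on the cylindrical end $S^1\times[0,\infty)$. Applying Theorem 3.10 directly to the transformed operators $\bar\partial_j^{t,\theta,\delta_1}$ and $\bar\partial_j^{t,\theta,\delta_2}$ then gives
$$\verb"ind"(\bar\partial_j^{t,\theta,\delta_1}) - \verb"ind"(\bar\partial_j^{t,\theta,\delta_2}) \;=\; [\delta_2] - [\delta_1].$$
Chaining the two displays produces the formula claimed in Theorem 3.11.

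The only point requiring care, rather than an actual obstacle, is to check that the Fredholm hypotheses are in force at both endpoints. I would verify that $p>2$ and $a_j(h_l)+2/p\notin\mathbb{Z}$ at the single marked point (the conditions of Theorem 3.9), and that the weights satisfy $\delta_1,\delta_2\in\mathbb{R}\setminus\mathbb{Z}$ so that they lie outside the discrete critical set $\mathfrak{D}_A$ of the asymptotic operator $L=i\partial_\theta$ used in Theorem 3.10. Once this bookkeeping is in place, the proof is essentially a one-line composition, since all the substantial content (the weighted estimates in Lemmas 3.2--3.7, the index decomposition of Theorem 3.8, and the spectral count underlying the Lockhart-McOwen jump) has already been absorbed into Theorems 3.9 and 3.10.
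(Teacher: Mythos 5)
Your proposal is correct and follows essentially the same route as the paper, which derives Theorem 3.11 directly by combining Theorem 3.9 (so the weight-independent correction $\sharp\{z_l : c_{jl}=0\}$ cancels upon subtraction) with the jump formula of Theorem 3.10 in the single-end case $k=1$. The only difference is that you spell out the Fredholm/non-integrality hypotheses explicitly, which the paper leaves implicit.
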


Assume the action of orbifold line bundle $L_{j}$ at marked point $z_{l}$ is $a_{j}(h_{l})(l=1,\cdots,k)$, let $k_{j,l}=-a_{j}(h_{l})-2/p$, $k=(k_{j,1},\cdots,k_{j,k})$ as [10]. Consider corresponding weight vector $\delta=(\delta_{j,1},\cdots,\delta_{j,k}), \delta'=(\delta'_{j,1},\cdots, \delta'_{j,k})$ and weighted Sobolev space $ W^{p}_{s,1+k+\delta}, \hat{W}^{p}_{s,1+k+\delta}$\par
By Theorem 3.8, which can transform the total index jumping into the sum of computation of each end, Theorem 3.9 and Theorem 3.11, we have
\begin{theorem} In case of the cylindrical metric, we have an index jumping formula:
\begin{eqnarray}
\verb"ind"({\bar \partial}_{j}^{\delta})-\verb"ind"({\bar \partial}_{j}^{\delta'})
&=&\sum^{k}_{l=1}([\delta_{j,l}]-[\delta'_{j,l}])
\end{eqnarray}
\end{theorem}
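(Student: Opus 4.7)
The plan is to reduce the multi-marked-point jumping formula to $k$ separate applications of the single-marked-point formula (Theorem 3.11), by first passing to the cylindrical model via the index transformation theorem with weights and then invoking the (weighted) index decomposition theorem to split each index into contributions from each end.

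First I would apply Theorem 3.9 to both $\bar{\partial}_j^{\delta}$ and $\bar{\partial}_j^{\delta'}$. Because the correction term $\sharp\{z_l: c_{jl}=0\}$ depends only on the line bundle data and not on the chosen weights, it cancels in the subtraction, leaving
$$\verb"ind"(\bar{\partial}_j^{\delta}) - \verb"ind"(\bar{\partial}_j^{\delta'}) = \verb"ind"(\bar{\partial}_j^{t,\theta,\delta}) - \verb"ind"(\bar{\partial}_j^{t,\theta,\delta'}),$$
where both cylindrical operators act between the standard (normal) weighted Sobolev spaces $W_{1,1+\kappa+\delta}^{p}$ and $W_{0,1+\kappa+\delta}^{p}$ (respectively with $\delta'$), so Lockhart--McOwen theory applies directly.

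Next I would apply the weighted analogue of the index decomposition Theorem 3.8 to each term, writing each cylindrical index as the sum of an inner-part index over $\Sigma \setminus \cup_l B_{1}(z_l)$ plus $k$ end-indices, one for each marked point:
$$\verb"ind"(\bar{\partial}_j^{t,\theta,\delta}) = \verb"ind"(\bar{\partial}^{t,\theta}: W_{1}^{p,B}(inn) \to W_{0}^{p}(inn)) + \sum_{l=1}^{k} \verb"ind"(\bar{\partial}^{t,\theta,\delta_{j,l}}: W_{1,1+\kappa_{j,l}+\delta_{j,l}}^{p,B} \to W_{0,1+\kappa_{j,l}+\delta_{j,l}}^{p}),$$
and analogously with $\delta'_{j,l}$ in place of $\delta_{j,l}$. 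The inner-region operator is insensitive to the weight vector, so its index cancels in the subtraction, and only the sum of per-end differences survives. Each per-end difference is precisely a one-marked-point problem to which Theorem 3.11 applies, yielding $[\delta'_{j,l}] - [\delta_{j,l}]$; summing over $l$ gives the claimed identity.

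The main obstacle I anticipate is justifying the weighted extension of Theorem 3.8. Its proof in the unweighted case (from [10]) uses partition-of-unity cut-offs to glue local inner and end problems; these cut-offs must be checked to remain bounded on the weighted spaces and to yield compact perturbations compatible with the weight $e^{\delta_{j,l}t}$ localized to the $l$-th end. Since the weight function equals $1$ on the compact inner region and depends only on the radial coordinate of its associated end, this extension is essentially bookkeeping, but it is the one step where the locality of the weights must be exploited carefully. Once granted, the rest of the argument is a routine assembly of Theorem 3.9, the weighted version of Theorem 3.8, and Theorem 3.11.
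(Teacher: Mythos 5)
Your proposal matches the paper's argument: the paper likewise derives Theorem 3.12 by combining the index transformation theorem with weights (Theorem 3.9), the index decomposition theorem in the cylindrical case (Theorem 3.8) to split the jump into per-end contributions, and the single-marked-point jumping formula (Theorem 3.11). Your explicit attention to the weighted extension of Theorem 3.8 is a point the paper passes over silently, but the overall route is the same.
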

Now we can apply results above to prove Theorem 1.4.\\
\\
$\textbf{Proof of Theorem 1.4}$: Consider the linearized operator of the Witten map \begin{eqnarray*}
D^{\delta}=D_{\wp,\mu}WI:{L_{1}}^{p,\delta}(\Sigma,L_{1}\times L_{2}\times \cdots \times L_{N})\rightarrow L^{p,\delta}(\Sigma,L_{1}\otimes\wedge^{0,1})\times\cdots L^{p,\delta}(\Sigma,L_{N}\otimes\wedge^{0,1}) \end{eqnarray*}
Note that $D^{\delta}$ is an operator over orbicurve, [12] has given the index relation between $D^{\delta}$ and
${\bar \partial}^{orb,\delta} _{j}:L^{p,\delta}_{1}(\Sigma,L_{j})\rightarrow L^{p,\delta}(\Sigma,L_{j}\otimes\wedge^{0,1})$. In fact they got :\begin{eqnarray}
\verb"ind"(D^{\delta})=\sum_{j=1}^{N}\verb"ind"({\bar \partial}^{orb,\delta} _{j})-\sum^{k}_{l=1}\sum_{j:a_{j}(h_{l})=0}1, \end{eqnarray}\\
where ${\bar \partial}^{orb,\delta} _{j}:L^{p,\delta}_{1}(\Sigma,L_{j})\rightarrow L^{p,\delta}(\Sigma,L_{j}\otimes\wedge^{0,1})$ is an operator on orbifold line bundle $L_{j}$, it is different from the operator ${\bar \partial}_{j}^{\delta}$ on desingularization $|L_{j}|$ of $L_{j}$. However, we have
\begin{theorem}

    \begin{eqnarray}\verb"ind"({\bar \partial}^{orb,\delta}_{j})=\verb"ind"({\bar \partial}_{j}^{\delta})\end{eqnarray}

\end{theorem}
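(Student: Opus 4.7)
The plan is to show that the two operators are intertwined, near each marked point, by the explicit desingularization map of equation (7), and are literally the same operator in the interior. Since the Fredholm index is stable under this kind of local identification of bundles and Sobolev spaces, the two indices must agree.

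\textbf{Step 1: Agreement on the interior.} Away from the marked points $z_1,\dots,z_k$, the orbifold bundle $L_j$ and its desingularization $|L_j|$ are canonically isomorphic, and the Cauchy-Riemann operators $\bar\partial^{orb,\delta}_j$ and $\bar\partial_j^{\delta}$ agree under this identification. Hence the two indices can differ only through contributions coming from neighborhoods of the $z_l$. The generalized index decomposition (Theorem 3.8 together with Theorem 3.9, applied to both $L_j$ and $|L_j|$) reduces the equality of global indices to an equality of local contributions at each end.

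\textbf{Step 2: Local identification at $z_l$.} Near $z_l$, with uniformizing disk $D$, group $\mathbb{Z}/m$ acting by $\zeta\cdot z=\zeta z$ with $\zeta=e^{2\pi i/m}$, and orbifold action on $L_j$ given by $\nu=v_{j,l}$, an orbifold section of $L_j$ is a $\mathbb{Z}/m$-equivariant function $\tilde u(z)$ satisfying $\tilde u(\zeta z)=\zeta^{-\nu}\tilde u(z)$. Following the desingularization map $\Psi:(z,w)\mapsto(z^m,z^{-\nu}w)$ of (7), define the local intertwiner $\Phi_l$ sending such a $\tilde u$ to the ordinary function $\tilde v$ on the coarse disk satisfying $\tilde v(z^m)=z^{-\nu}\tilde u(z)$; equivariance ensures well-definedness. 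A direct chain-rule check shows that $\Phi_l$ carries $\bar\partial^{orb}$ to $\bar\partial$ since $z\mapsto z^m$ is holomorphic and the multiplier $z^{-\nu}$ is anti-holomorphic-free.

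\textbf{Step 3: Compatibility of weighted Sobolev norms.} In the cylindrical coordinates $z=e^{-t-i\theta}$ on the orbifold side and $\xi=z^m=e^{-mt-im\theta}$ on the coarse side, $\Phi_l$ is a linear change of variable $(t,\theta)\mapsto(mt,m\theta)$ together with multiplication by $|z|^{-\nu}=e^{\nu t}$. The cylindrical metric on $|L_j|$ is defined precisely so that $|e_j|=|z|^{a_j(h_l)}=|z|^{\nu/m}$, which is exactly the factor needed for the $\Phi_l$-twist to absorb the orbifold weight into the ordinary weight on the desingularization. Consequently, after the change of variable, the weighted norms $\|\cdot\|_{\hat W^p_{1,1+\kappa+\delta}}$ and $\|\cdot\|_{W^p_{0,1+\kappa+\delta}}$ on the two sides differ only by an overall constant Jacobian factor, so $\Phi_l$ extends to a bounded isomorphism between the local weighted Sobolev spaces entering the definitions of $\bar\partial^{orb,\delta}_j$ and $\bar\partial^\delta_j$.

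\textbf{Step 4: Conclusion.} Assembling Steps 1--3, at every end near $z_l$ the local operator contribution to $\verb"ind"(\bar\partial^{orb,\delta}_j)$ equals the local operator contribution to $\verb"ind"(\bar\partial^\delta_j)$ via the isomorphism $\Phi_l$; and the common interior contribution is shared. Summing, the two Fredholm indices coincide. The main obstacle is purely bookkeeping: one must verify that the weight shift produced by the multiplier $|z|^{-\nu}$ in $\Phi_l$ plus the rescaling $t\mapsto mt$ lands exactly on the weight used in the definition of $L^{p,\delta}(\Sigma,|L_j|)$. Once that accounting is done, the intertwining by $\Phi_l$ is immediate and the equality of indices follows.
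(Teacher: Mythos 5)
Your proposal is correct in substance but takes a genuinely different route from the paper. The paper disposes of this theorem in one line by citing Proposition 4.2.2 of Chen--Ruan [8], which identifies the cohomology (hence the index) of an orbifold line bundle on an orbicurve with that of its desingularization; you instead reprove that identification by hand, exhibiting the explicit local intertwiner $\Phi_l$ coming from the desingularization map $(z,w)\mapsto(z^m,z^{-\nu}w)$ of equation (7) and checking that it conjugates $\bar\partial^{orb,\delta}_j$ into $\bar\partial^{\delta}_j$ as a bounded isomorphism of weighted Sobolev spaces. Each approach buys something: the citation is short, but [8] is stated at the level of sheaf cohomology / the unweighted index, so for arbitrary weights $\delta$ one really does need the norm-compatibility verification your Step 3 supplies --- the cylindrical metric on $|L_j|$ is \emph{defined} in the paper via the correspondence with $L_j$, which is exactly why your accounting closes. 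Two small remarks on your write-up. First, the detour through Theorems 3.8--3.9 in Step 1 is unnecessary: your local intertwiners glue with the canonical interior identification into a single global Banach-space isomorphism conjugating one operator into the other, and equality of Fredholm indices is then immediate without any index decomposition. Second, the bookkeeping you flag is real but convention-dependent: under $t\mapsto mt$ a weight $e^{\delta t}$ on the uniformizing chart becomes $e^{(\delta/m)s}$ on the coarse chart, so "the same $\delta$" on both sides only makes sense once one fixes that the weight is measured in the coarse cylindrical coordinate; with that convention your Step 3 is correct and the proof is complete.
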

$Proof$: Directly apply  Proposition 4.2.2 of [8] to our case. $\Box$ \par
Let $E=L_{j}(j=1,\cdots,N)$, where $N$ stands for the number of variable of quasi-homogeneous polynomial W in Spin equations,
then by (45) we have \begin{eqnarray*}\verb"ind"({\bar \partial}^{orb,\delta}_{j})=\verb"ind"({\bar \partial}_{j}^{\delta})\end{eqnarray*}
Let $\delta=(\delta_{jl}),\delta'=(\delta'_{jl})$ be weight matrix($j=1,\cdots,N;l=1,\cdots,k)$, and assume all weight component
 $\delta_{jl}\in \mathbb{R}\setminus\mathbb{Z}$, $\delta'_{jl}\in \mathbb{R}\setminus\mathbb{Z}$.\par
Combining (43), (44) and (45), we complete the proof of Theorem 1.4. $\Box$ \\

\section{Appendix}
\subsection{Boundary Maslov index}
\hspace{14pt} In this part, we mainly recall the definition and some properties of boundary Maslov index (see [14] for more details).\par
According to [14], first we need a special decomposition of base Riemannnian surface. Let's recall some definitions.\\
\\
$\textbf{Definition 4.1.}$ A $decomposition$ of a 2-manifold $\Sigma_{02}$ is a pair of submanifolds $\Sigma_{01}$,$\Sigma_{12}$ of $\Sigma_{02}$ such that
$\Sigma_{02}=\Sigma_{01}\bigcup\Sigma_{12}$, $\Sigma_{01}\bigcap\Sigma_{12}=\partial\Sigma_{01}\bigcap \partial\Sigma_{12}$\\
It follows that
$\partial\Sigma_{ij}=\Gamma_{i}\bigcup\Gamma_{j}$, $\Gamma_{i}\bigcap\Gamma_{j}=\emptyset$,
 where $\Gamma_{i}$ is a disjoint union of circles in $\Sigma_{02}$ and  $\Gamma_{1}=\Sigma_{01}\bigcap\Sigma_{12}$.\\
 \\
$\textbf{Definition 4.2.}$ A bundle pair $(E, F)$ over $\Sigma$ consists of a complex vector bundle $E\rightarrow\Sigma$ and a total real subbundle $F\subset E_{\partial\Sigma}$ over the boundary. A $decomposition$ of a bundle pair $(E_{02}, F_{02})$ over $\Sigma_{02}$ consists of two bundle pairs, $(E_{01}, F_{0}\cup F_{1})$ over $\Sigma_{01}$ and $(E_{12}, F_{1}\cup F_{2})$ over $\Sigma_{12}$, such that $\Sigma_{01}, \Sigma_{12}$ is a decomposition of $\Sigma_{02}$ as in definition 4.1 and $F_{i}\subset E_{02}|_{\Gamma_{i}}$.
Next we list the axiomatic definitions of boundary Maslov index:\\
\begin{theorem} There is a unique operation, called $boundary$ $Maslov$ $index$, that assigns an integer $\mu(E,F)$ to each bundle pair $(E,F)$ and satisfies the following axioms:\\
(Isomorphism): If $\Phi:E_{1}\rightarrow E_{2}$ is a vector bundle isomorphism covering a diffeomorphism $\phi:\Sigma_{1}\rightarrow\Sigma_{2}$,then  \begin{eqnarray*}
\mu(E_{1},F_{1})=\mu(E_{2},\Phi(F_{1}))  \end{eqnarray*}
(Direct sum): $\mu(E_{1}\bigoplus E_{2},F_{1}\bigoplus F_{2})=\mu(E_{1},F_{1})+\mu(E_{2},F_{2})$\\
(Composition): For a composition of a bundle pair decomposition as in definition 4.2, we have  \begin{eqnarray*}
\mu(E_{02},F_{02})=\mu(E_{01},F_{01})+\mu(E_{12},F_{12})  \end{eqnarray*}
(Normalization): For $\Sigma=D$ the unit disc, $E=D\times \mathbb{C}$ the trivial bundle and $F_{z}=e^{{ik\theta}/2}\mathbb{R}$ for $z=e^{i\theta}\in \partial D=S^1$ we have  \begin{eqnarray*}
\mu(D\times \mathbb{C},F)=k  \end{eqnarray*} \par
\end{theorem}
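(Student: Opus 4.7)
The plan is to establish existence and uniqueness of $\mu$ separately, as is standard in axiomatic treatments of characteristic numbers. For existence, I would construct $\mu(E,F)$ as the Maslov index of a loop of Lagrangian subspaces. Concretely, choose a unitary trivialization of $E$ in a collar neighborhood of $\partial\Sigma$; this exists because $\partial\Sigma$ is one-dimensional. In this trivialization, $F|_{\partial\Sigma}$ becomes a smooth map $\partial\Sigma \to \mathcal{L}(n) \cong U(n)/O(n)$ into the Lagrangian Grassmannian. Define $\mu(E,F)$ to be the sum over boundary circles of the winding number of the composition with the squared determinant $\det^2 : \mathcal{L}(n) \to S^{1}$. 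Two trivializations differ by a $U(n)$-valued map on the collar, and since $\det^2$ is trivial on the $SU(n)$ factor, the result is independent of the choice.

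Verification of the axioms then proceeds as follows. \textbf{Isomorphism} is built into the construction, since $\mu$ only depends on the homotopy class of the Lagrangian loop. \textbf{Direct sum} follows from the multiplicativity of $\det^2$ on block-diagonal Lagrangians. For \textbf{Composition}, I would choose trivializations of $E_{01}$ and $E_{12}$ that agree along the shared boundary $\Gamma_1$; then the contributions of $\Gamma_1$ to $\mu(E_{01}, F_{01})$ and $\mu(E_{12}, F_{12})$ cancel because $\Gamma_1$ inherits opposite orientations as a boundary component of the two pieces. \textbf{Normalization} is a direct computation: with $F_z = e^{ik\theta/2}\mathbb{R}$, the image in $\mathcal{L}(1)$ winds by angle $k\theta$ under $\det^2$, giving degree $k$.

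For uniqueness, suppose $\mu'$ is any assignment satisfying the four axioms. Applying the composition axiom iteratively, any bundle pair $(E,F)$ decomposes into a closed-surface piece (whose value is fixed by capping boundary disks and invoking normalization) and a disjoint union of disk pieces carrying all the boundary data. On each disk, the trivialization exhibits $F$ as a loop in $\mathcal{L}(n)$ that is homotopic, through loops in $\mathcal{L}(n)$, to a direct sum of standard loops $e^{ik_i\theta/2}\mathbb{R}$. Homotopy invariance itself is extracted from the axioms by realizing a homotopy as a cylinder decomposition and using that a Lagrangian subbundle extending across a cylinder contributes zero. Then direct sum and normalization force $\mu'(E,F) = \sum_i k_i$, matching the construction above.

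The main obstacle will be the composition axiom, specifically showing that the value is independent of the choice of decomposition. This ultimately reduces to proving that for any loop $\gamma: S^1 \to \mathcal{L}(n)$, the cylinder bundle pair $(S^1\times[0,1])\times\mathbb{C}^n$ with Lagrangian subbundle $\gamma$ on both boundary circles has $\mu = 0$. This follows from the isomorphism axiom applied to the obvious product diffeomorphism, but carefully tracking that the induced orientations on the two boundary copies are opposite (so their winding-number contributions cancel rather than double) is the delicate point and is the source of most sign errors in this type of argument.
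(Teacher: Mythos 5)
The paper does not actually prove this statement: Theorem 4.1 is quoted from McDuff--Salamon [14] (Theorem C.3.5 there) and used as a black box, so there is no in-paper argument to compare yours against. Your outline is, in spirit, the standard existence-and-uniqueness proof from that reference, but as written it contains a genuine gap in the existence half, exactly at the point where you assert well-definedness.

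Trivializing $E$ only over a collar of $\partial\Sigma$ is not enough. Two unitary trivializations over the collar differ, up to homotopy, by a map $g:\partial\Sigma\rightarrow U(n)$, which replaces the Lagrangian loop $\Lambda$ by $g\Lambda$ and changes the winding number of $\det^{2}$ by $2\deg(\det\circ g)$; this is generally nonzero, and the appeal to ``$\det^{2}$ is trivial on the $SU(n)$ factor'' does not save you because $g$ has no reason to take values in $SU(n)$. The correct argument is that $E$ is trivializable over \emph{all} of $\Sigma$ (a compact surface with nonempty boundary deformation retracts onto a $1$-complex), and for two global trivializations the transition map $g$ extends over $\Sigma$, so $\det\circ g:\Sigma\rightarrow S^{1}$ has total degree zero over $\partial\Sigma$ since $[\partial\Sigma]=0$ in $H_{1}(\Sigma)$; only then is the sum of boundary winding numbers independent of choices. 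A second omission: your construction assigns nothing to a closed surface, where $F=\emptyset$ and one must take $\mu(E,\emptyset)=2\langle c_{1}(E),[\Sigma]\rangle$; this case is not optional here, since the paper uses the composition axiom precisely in the form $\mu(E,F)=\mu(E_{02},\emptyset)-\mu(E_{12},F_{12})$ with $\Sigma_{02}$ closed. With these two repairs, the verification of the axioms and your uniqueness argument (cut into a closed piece plus disks, extract homotopy invariance from the cylinder decomposition, then apply normalization and direct sum on each disk) go through as in [14], including the orientation bookkeeping on the shared circles that you correctly identify as the delicate point.
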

In addition, we have another important property which relates the boundary Maslov index to the first Chern class.\\
\begin{theorem}
If $\partial\Sigma=\emptyset$, we have \begin{eqnarray*}
\mu(E,\emptyset)=2<c_{1}(E),[\Sigma]>  \end{eqnarray*}
where $c_{1}(E)$ is the first Chern class  of $E$ and $[\Sigma]$ is the fundamental class.
\end{theorem}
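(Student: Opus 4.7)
The plan is to derive Theorem 4.2 from the four axioms of Theorem 4.1, reducing everything to the Normalization axiom on a disc. As a first reduction, observe that any complex vector bundle $E$ of rank $n$ on a compact oriented surface is topologically isomorphic to $(\det E)\oplus\underline{\mathbb{C}}^{n-1}$, with $c_{1}$ unchanged; combined with the Isomorphism and Direct Sum axioms, this reduces the general identity $\mu(E,\emptyset)=2\langle c_{1}(E),[\Sigma]\rangle$ to the line-bundle statement $\mu(L,\emptyset)=2\deg L$ on a closed surface, with the trivial-bundle case $d=0$ handled along the way.

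For the spherical case $\Sigma=S^{2}$, decompose $S^{2}=D_{+}\cup_{S^{1}}D_{-}$ and present $L=\mathcal{O}(d)$ via trivializations on the two discs whose transition across the equator is $z^{d}$. Pick the totally real subbundle $F$ on $S^{1}$ equal to $\mathbb{R}$ in the $D_{+}$-trivialization; in the $D_{-}$-trivialization, with its induced reversed boundary orientation, $F$ becomes $e^{id\theta}\mathbb{R}$. By the Normalization axiom $\mu(D_{+}\times\mathbb{C},\mathbb{R})=0$ and $\mu(D_{-}\times\mathbb{C},e^{id\theta}\mathbb{R})=2d$, and the Composition axiom assembles these into $\mu(\mathcal{O}(d),\emptyset)=2d=2\langle c_{1}(\mathcal{O}(d)),[S^{2}]\rangle$.

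For closed $\Sigma$ of arbitrary genus the same cut-and-compute strategy applies. Remove a disc $D\subset\Sigma$, set $\Sigma_{*}=\Sigma\setminus\mathrm{int}\,D$, and trivialize $L$ on both pieces (both are trivializable, since $\Sigma_{*}$ retracts to a $1$-complex) with transition $z^{d}$ across $\partial D$. Composition plus Normalization on $D$ gives $\mu(L,\emptyset)=\mu(\Sigma_{*}\times\mathbb{C},e^{id\theta}\mathbb{R})$. To evaluate the right-hand side, cap $\Sigma_{*}$ by another disc $D'$, extending the trivial bundle trivially across the cap to obtain the trivial line bundle $L_{0}$ on the reconstituted surface. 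A second application of Composition shows $\mu(L,\emptyset)-\mu(L_{0},\emptyset)=2d$, so the identity $\mu(L,\emptyset)=2\deg L$ reduces to the vanishing $\mu(\Sigma\times\mathbb{C},\emptyset)=0$ for the trivial line bundle on closed $\Sigma$. This last identity follows either by the analogous axiomatic argument with $d=0$, whose base case $S^{2}$ is covered by the $\mathcal{O}(0)$ computation above, or by an elementary induction on genus using separating simple closed curves (with an auxiliary cut-and-reglue of a trivial cylinder to handle the genus-one case, where no genus-reducing separating curve exists).

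The main obstacle throughout is the careful bookkeeping of orientations and transition functions, and in particular the boundary-orientation reversal across each gluing circle: it is precisely this reversal that converts the winding number $d$ into the $e^{id\theta}\mathbb{R}$ appearing in Normalization on the $D_{-}$-side, and that is responsible for the factor of $2$ in the final formula. The explicit spherical computation serves as the template that fixes all sign conventions consistently; once those are settled, the higher-genus argument is essentially the propagation of the same bookkeeping through a single capping-off step.
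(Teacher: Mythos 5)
The paper offers no proof of this statement: Theorem 4.2 is quoted from McDuff--Salamon [14] as a known property of the boundary Maslov index, so there is nothing in the paper to compare your argument against. Your plan --- derive the identity from the four axioms of Theorem 4.1 by reducing to line bundles via $E\cong(\det E)\oplus\underline{\mathbb{C}}^{\,n-1}$ together with the Isomorphism and Direct Sum axioms, computing $\mu(\mathcal{O}(d),\emptyset)=2d$ on $S^{2}$ from two discs via Composition and Normalization, and then capping off a disc in higher genus --- is the standard derivation and is correct in outline; the sign bookkeeping you flag does come out right (it is consistent with Riemann--Roch, e.g. the real index $2+2d$ for $\mathcal{O}(d)$ on $S^{2}$ matches $\chi(D_{+})+0$ plus $\chi(D_{-})+2d$).

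Two steps need repair. First, in the higher-genus reduction you write $\mu(L,\emptyset)=\mu(\Sigma_{*}\times\mathbb{C},e^{id\theta}\mathbb{R})$, placing the winding on the $\Sigma_{*}$ side; to then get $\mu(L,\emptyset)-\mu(L_{0},\emptyset)=2d$ you would need to know how $\mu(\Sigma_{*}\times\mathbb{C},\cdot)$ changes when the boundary condition is twisted, which is not an axiom. The fix is to take $F=\mathbb{R}$ in the $\Sigma_{*}$-trivialization so that the winding $e^{id\theta}\mathbb{R}$ lands on the disc, where Normalization gives $2d$; then $\mu(L,\emptyset)$ and $\mu(L_{0},\emptyset)$ share the identical unknown term $\mu(\Sigma_{*}\times\mathbb{C},\mathbb{R})$, which cancels. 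Second, your first suggested route to $\mu(\Sigma\times\mathbb{C},\emptyset)=0$ (``the analogous axiomatic argument with $d=0$'') is vacuous for $g\geq1$, since capping off with $d=0$ compares the trivial bundle with itself. Only your second route works, and it does: setting $m_{g}=\mu(\Sigma_{g,1}\times\mathbb{C},\mathbb{R})$, the decompositions $\Sigma_{g}=\Sigma_{g,1}\cup D$ and $\Sigma_{g_{1}+g_{2}}=\Sigma_{g_{1},1}\cup\Sigma_{g_{2},1}$ give $\mu(\Sigma_{g}\times\mathbb{C},\emptyset)=m_{g}$ and $m_{g_{1}+g_{2}}=m_{g_{1}}+m_{g_{2}}$, so everything reduces to the torus; decomposing $T^{2}$ into two annuli and comparing with $S^{2}=D\cup A\cup D$ forces $\mu(A\times\mathbb{C},\mathbb{R}\sqcup\mathbb{R})=0$ and hence $m_{1}=0$. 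With these two repairs your proof is complete.
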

\subsection{Donaldson index theory}
\hspace{14pt} We adopt notations of [9] in the following discussion.
Consider a noncompact Riemann manifold $X=Y\times \mathbb{R}$ called a tubular end, where $Y$ is a compact 3-dimensional manifold.
Let $\pi:Y\times \mathbb{R}\rightarrow Y$ be the projection, $P\rightarrow Y $ a G-principal bundle, where G is a compact Lie group, $\pi^{*}P$ fulled bundle on $X$, $A$ is a connection on $P$, we write $A$ as fulled connection on $\pi^{*}P$ if no confusion.
Assume $d_{A}$ is the corresponding convariant derivative of $A$, $d_{A}$ is its adjoint operator.\par
Donaldson defined an operator which is very important for his theory (see [9]) as follows:\begin{equation}
D_{A}=-{d^{*}}_{A}\bigoplus {d^{+}}_{A}:\Omega^{1}_{X}(\mathfrak{g}_{P})\rightarrow \Omega^{0}_{X}(\mathfrak{g}_{P})\bigoplus \Omega^{1}_{X}(\mathfrak{g}_{P}) \end{equation}
where $\mathfrak{g}_{P}$ stands for adjoint bundle of principal bundle $\pi^{*}P$ associated to the adjoint representation of $G$, $\Omega^{k}_{X}(\mathfrak{g}_{P})$ is smooth $k$-forms taking values in $g_{P}$.
What's more, Donaldson transformed $D_{A}$ to be a simple form as follows:\begin{equation}
D_{A}=\frac{d}{dt}+L: \Omega^{0}_{X}(\mathfrak{g}_{P})\bigoplus \Omega^{1}_{X}(\mathfrak{g}_{P})\rightarrow \Omega^{0}_{X}(\mathfrak{g}_{P})\bigoplus \Omega^{1}_{X}(\mathfrak{g}_{P}) \end{equation}
where $L$ is a self dual elliptic operator.
This expression is very important to his theory, that is to say, when $L$ is invertible, by separation of variables he proved the following theorem:\\
\begin{theorem}
$D_{A}=\frac{d}{dt}+L: L^{2}_{1}\rightarrow L^{2}$ is a Fredholm operator.
\end{theorem}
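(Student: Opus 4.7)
The plan is to prove the theorem by separation of variables on the cylinder $Y\times\mathbb{R}$, exploiting the fact that the operator has constant coefficients in the $t$-direction. Since $L$ is self-adjoint and elliptic on the compact manifold $Y$, standard spectral theory gives a complete $L^{2}$-orthonormal basis of eigensections $\{\phi_{n}\}$ with real eigenvalues $\{\lambda_{n}\}$ satisfying $|\lambda_{n}|\to\infty$. The hypothesis that $L$ is invertible is equivalent to $0\notin\{\lambda_{n}\}$, so there is a spectral gap $|\lambda_{n}|\ge c>0$. This gap is what ultimately makes $D_{A}$ not only Fredholm but in fact invertible.

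First, I would use Fubini together with the spectral decomposition to identify $L^{2}(Y\times\mathbb{R};\mathfrak{g}_{P})\cong\bigoplus_{n} L^{2}(\mathbb{R})$, where any section $u$ corresponds to the sequence of coefficients $u_{n}(t):=\langle u(\cdot,t),\phi_{n}\rangle_{L^{2}(Y)}$. Under this isomorphism the equation $D_{A}u=f$ decouples into the infinite family of scalar ODEs
\[
u_{n}'(t)+\lambda_{n}u_{n}(t)=f_{n}(t),\qquad n\in\mathbb{N},
\]
one for each eigenvalue of $L$.

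Second, I would solve each ODE explicitly by the integrating-factor method: when $\lambda_{n}>0$ the unique $L^{2}(\mathbb{R})$ solution is $u_{n}(t)=\int_{-\infty}^{t}e^{-\lambda_{n}(t-s)}f_{n}(s)\,ds$, with the symmetric formula integrating on $[t,\infty)$ when $\lambda_{n}<0$. Young's convolution inequality gives $\|u_{n}\|_{L^{2}(\mathbb{R})}\le |\lambda_{n}|^{-1}\|f_{n}\|_{L^{2}(\mathbb{R})}$, and the equation itself yields $\|u_{n}'\|_{L^{2}(\mathbb{R})}\le 2\|f_{n}\|_{L^{2}(\mathbb{R})}$. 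Summing these estimates over $n$ via Parseval assembles a bounded inverse $G:L^{2}\to L^{2}_{1}$, showing that $D_{A}$ is an isomorphism and hence Fredholm (of index zero).

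The main obstacle is the bookkeeping between the $L^{2}_{1}$ norm on $Y\times\mathbb{R}$, which mixes $\partial_{t}$ with all $Y$-derivatives, and the weighted $\ell^{2}$ of Fourier coefficients in the eigenbasis. Control in the $t$-direction falls directly out of the ODE, while control in the $Y$-direction requires combining elliptic regularity for $L$ on $Y$ (so that $\|\phi_{n}\|_{H^{1}(Y)}^{2}\le C(1+\lambda_{n}^{2})$) with the estimate $|\lambda_{n}|\,\|u_{n}\|_{L^{2}(\mathbb{R})}\le C\|f_{n}\|_{L^{2}(\mathbb{R})}$ forced by the spectral gap. Once these two ingredients are matched, the uniform invertibility — and hence the Fredholm property — follows at once.
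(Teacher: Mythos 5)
Your proposal is correct and follows essentially the same route as the paper, which attributes this theorem to Donaldson's separation-of-variables argument: diagonalize the self-adjoint elliptic operator $L$ on the compact cross-section $Y$, use the spectral gap coming from invertibility of $L$ to solve the decoupled ODEs with uniform bounds, and assemble a bounded inverse (so $D_{A}$ is in fact an isomorphism, hence Fredholm of index zero). The explicit convolution kernel you construct is precisely the operator-valued kernel $K(s-t)$ that the paper later uses in Lemma 4.15.
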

 Then Donaldson studied the following index problem. Assume that Riemannian manifold $X$ is a disjoint union of  two disconnected components $X=X_{1}\cup X_{2}$, and two ends $Y \times (0,\infty),{\bar Y }\times (0,\infty)$ which are identified are contained in different components.\par
Now we consider a family of Riemannian manifolds  $X^{\sharp(T)} $, depending on a real parameter $T>0$, obtained by identifying the two ends of $X$. For fixed $T$ we first delete the infinite portions $Y \times [2T,\infty),{\bar Y} \times [2T,\infty)$ from the two ends, and then identify $(y,t)\in Y\times(0,T)\subset X_{1}$ with $(y,2T-t)\in Y\times (T,2T)\subset X_{2}$. \par
This gives a connected compact Riemann manifold $X^{\sharp(T)}$. Clearly these are all diffeomorphic for different values of $T$. We will denote the manifold by $X^{\sharp}$ when the $T$ dependence is not important. The procedure is a generalization of the connected sum operation on manifolds. \par
Fix an isometry between  $Y$ and $\bar Y $. Suppose there are vector bundles $E_{1},E_{2}$ on $X_{1},X_{2}$ respectively. Consider smooth, compactly supported section spaces $\Gamma_{c}(E_{1}),\Gamma_{c}(E_{2})$ of $E_{1},E_{2}$, we define Sobolev norm as follows:\begin{eqnarray}
             ||f_{i}||_{L^{p}}:&=&(\int_{X_{i}}|f_{i}|^{p}dvol)^{1/p}, \forall f_{i}\in \Gamma_{c}(E_{i})\\
             ||\rho_{i}||_{L^{p}_{1}}:&=&(\int_{X_{i}}(|\rho_{i}|^{p}+|D_{A}\rho_{i}|^{p}) dvol)^{1/p}, \forall\rho_{i}\in \Gamma_{c}(E_{i}) \end{eqnarray}\\
             where$ |\cdot|$ denote norm induced by a Riemann metric $g$. \par Let
             $L^{p}(E_{i}),L^{p}_{1}(E_{i})$ be Sobolev spaces by completing spaces $\Gamma_{c}(E_{1}),\Gamma_{c}(E_{2})$ respectively under the form of $||\cdot||_{L^{p}},||\cdot||_{L^{p}_{1}}$ \par
Consider differential operators $D_{i}$ acting on Sobolev spaces $L^{p}_{1}(E_{i}), i=1,2$. Donaldson proved these operators were Fredholm operators in [9], so ind $D_{i}$ are well defined.\par
 There is an obvious way of constructing an bundle $E^{\sharp(T)}$ over $X^{\sharp(T)}$, identifying the bundles over the ends, and the operator $D^{\sharp(T)}$ over $X^{\sharp(T)}$. We write $D^{\sharp(T)}$ as  $D^{\sharp}$ when the $T$ dependence is not important.
\begin{theorem}[$L^{p}$-Index gluing theorem]
\begin{equation}\verb"ind"(D^{\sharp})=\verb"ind"(D_{1})+\verb"ind"(D_{2}) \end{equation}
\end{theorem}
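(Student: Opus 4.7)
The plan is to adapt Donaldson's $L^{2}$-gluing proof sketched in Section 4.2 to the $L^{p}$ setting, exploiting the decomposition $D_{i}=\frac{d}{dt}+L_{i}$ and invertibility of $L_{i}$. The first step is to promote the spectral gap of $L_{i}$ to $L^{p}$-decay on the ends. Since $L_{i}$ is an invertible self-adjoint elliptic operator on the cross-section $Y=S^{1}$, its spectrum is discrete and bounded away from zero by some $\mu>0$. Separation of variables and elliptic $L^{p}$-regularity on $Y$ then give, for any solution of $D_{i}\rho=0$ supported on the end, an estimate of the form $\|\rho(\cdot,t)\|_{L^{p}(Y)}\le Ce^{-\mu t}\|\rho(\cdot,0)\|_{L^{p}(Y)}$; the same spectral gap furnishes an exponentially decaying Green's kernel, and hence a bounded parametrix $P_{i}:L^{p}(E_{i})\to L^{p}_{1}(E_{i})$ with $D_{i}P_{i}-I$ compact (its existence is anyway guaranteed by Fredholmness of $D_{i}$, the $L^{p}$ analogue of Theorem 4.3).

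Next I would glue these parametrices across the neck of $X^{\sharp(T)}$. Choose cutoff functions $\beta_{1},\beta_{2}$ forming a partition of unity, with $\beta_{i}$ supported on $X_{i}$ together with the near half of the neck, and auxiliary cutoffs $\tilde\beta_{i}$ equal to one on $\mathrm{supp}\,\beta_{i}$ and vanishing outside a slightly larger region. Setting $Q^{\sharp}f:=\tilde\beta_{1}P_{1}(\beta_{1}f)+\tilde\beta_{2}P_{2}(\beta_{2}f)$, a direct computation gives $D^{\sharp}Q^{\sharp}=I+R_{T}$, where $R_{T}=[D^{\sharp},\tilde\beta_{1}]P_{1}\beta_{1}+[D^{\sharp},\tilde\beta_{2}]P_{2}\beta_{2}$ is supported in the middle of the neck. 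The $L^{p}$-decay of $P_{i}$ on the ends yields $\|R_{T}\|_{L^{p}\to L^{p}}\le Ce^{-\mu T/2}$, so $I+R_{T}$ is invertible for $T$ large; Fredholmness of $D^{\sharp}$ is then immediate (and also follows from compactness of $X^{\sharp}$). Using the same cutoffs I would construct a gluing map
\[
\Gamma:\ker D_{1}\oplus\ker D_{2}\longrightarrow\ker D^{\sharp},\qquad (\rho_{1},\rho_{2})\mapsto\tilde\beta_{1}\rho_{1}+\tilde\beta_{2}\rho_{2}-Q^{\sharp}(I+R_{T})^{-1}D^{\sharp}\bigl(\tilde\beta_{1}\rho_{1}+\tilde\beta_{2}\rho_{2}\bigr),
\]
which differs from the naive cut-and-paste by a term of size $O(e^{-\mu T/2})$ and is therefore injective. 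A symmetric construction starting from $\rho\in\ker D^{\sharp}$, restricting by $\beta_{i}$ and correcting by $P_{i}$, produces an inverse up to exponentially small error; applying the same reasoning to the formal adjoint $D^{\sharp,\ast}=-\frac{d}{dt}+L$, whose cross-sectional operator is still invertible, matches cokernels. Taking real dimensions then yields $\verb"ind"(D^{\sharp})=\verb"ind"(D_{1})+\verb"ind"(D_{2})$.

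The delicate point is that Donaldson's original proof leans on Parseval's identity and the orthogonal spectral decomposition of $L_{i}$ to extract both the $L^{2}$ decay and the operator-norm bounds on $R_{T}$, tools that are unavailable in $L^{p}$. The main obstacle is therefore to replace these Hilbert-space estimates by $L^{p}$-bounds on the resolvent of $\frac{d}{dt}+L_{i}$ on the cylinder $Y\times\mathbb{R}$, combined with elliptic $L^{p}$-regularity on $Y=S^{1}$ and careful bookkeeping of weights in the weighted Sobolev norms, so that $\|R_{T}\|_{L^{p}\to L^{p}}\to 0$ holds uniformly as $T\to\infty$. Once this technical estimate is in place, the parametrix construction and the kernel/cokernel matching go through essentially as in the $L^{2}$ version.
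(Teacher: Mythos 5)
Your overall strategy (splice inverses on the two pieces with cutoffs supported on the neck, show the glued operator is surjective up to a small error, and match kernels by cut--correct maps) is the same Donaldson-style argument the paper uses, and you correctly identified the real technical point, namely an $L^{p}$ bound for the inverse of $\frac{d}{dt}+L$ on the cylinder, which the paper supplies via the operator-valued kernel representation $Q(\rho)=\int K(s-t)\rho(t)\,dt$ (Lemma 4.5). However, there is a genuine gap in the way you set it up: you only assume the $P_{i}$ are parametrices with $D_{i}P_{i}-I$ compact, and every subsequent step silently uses $D_{i}P_{i}=I$ exactly. If $D_{i}P_{i}=I+K_{i}$ with $K_{i}$ compact but not small, then $D^{\sharp}Q^{\sharp}=I+R_{T}+\tilde\beta_{1}K_{1}\beta_{1}+\tilde\beta_{2}K_{2}\beta_{2}$: the extra terms do not decay as $T\to\infty$, so invertibility of $I+R_{T}$ proves nothing about surjectivity of $D^{\sharp}$, your map $\Gamma$ no longer lands in $\ker D^{\sharp}$ (the computation $D^{\sharp}\Gamma=0$ needs $D^{\sharp}Q^{\sharp}=I+R_{T}$ exactly), and in the reverse direction $\beta_{i}\rho-P_{i}D_{i}(\beta_{i}\rho)$ is not an element of $\ker D_{i}$. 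Exact bounded right inverses exist only when $D_{i}$ is surjective, and "matching cokernels by the formal adjoint" does not repair this: the same obstruction reappears for the adjoint operators (which, moreover, must be taken on the dual space $L^{p'}$ with the dual weights, not on $L^{p}$), so you obtain at best one-sided inequalities between kernel dimensions, not the index identity.

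What is missing is the reduction to the surjective case, which is exactly the paper's Step 4: first prove the formula under the assumption $\mathrm{coker}\,D_{i}=0$, where the bounded right inverses $Q_{i}$ with $\|Q_{i}\rho\|_{p}\leq C_{p}\|\rho\|_{p}$ exist and your splicing argument (or the paper's, with the normalization $\phi_{1}^{p}+\phi_{2}^{p}=1$ and the weaker but sufficient error bound $\epsilon(T)\sim T^{-1}$ coming from $\|\nabla\phi_{i}\|_{\infty}$, no exponential decay needed) goes through; then remove the assumption by stabilizing, i.e.\ choosing injections $U_{i}:\mathbb{R}^{n_{i}}\to\Gamma(E_{i})$, $n_{i}=\dim\mathrm{coker}\,D_{i}$, supported in the interior of $\Sigma_{i}$ so that $\tilde D_{i}=D_{i}\oplus U_{i}$ is surjective, gluing to $\tilde D^{\sharp}=D^{\sharp}\oplus U_{1}\oplus U_{2}$, and subtracting $n_{1}+n_{2}$ from both sides. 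Without this stabilization step (or some equivalent device), your argument establishes Fredholmness of $D^{\sharp}$ but not the additivity of the index for general, non-surjective $D_{i}$.
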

 Two theorems above are proved under norms $L^{2}$ and $L^{2}_{1}$ , but Donaldson also pointed out these two theorems also can be extended under norms $L^{p}$ and $L^{p}_{1}$. Theorem 4.4 is almost the same as Theorem 2.5, so next we only give the proof of Theorem 2.5.\\
 \\
$\textbf{Proof of Theorem 2.5}$: We will imitate Donaldson's proof of $L^{2}$-edition step by step. The proof involves four steps, for the first three steps we suppose that the operators $D^{i}$ over $X_{i}$ have zero cokernel, then admit bounded right inverses  \begin{eqnarray*}
Q_{i}: L^{p}=L^{p}(E_{i})\rightarrow L^{p}_{1}=L^{p}_{1}(E_{i}) \end{eqnarray*}
by a following lemma, with $||Q_{i}(\rho)||_{p}=||Q_{i}(\rho)||_{L^{p}}\leq C_{p}||\rho||_{p}$ and $D_{i}Q_{i}=1$.\\

\begin{lemma}
Assume  $L$ is invertible. Consider $D=\frac{d}{dt}+L: L^{p}_{1}\rightarrow L^{p}$ has zero cokernel,
then there exists a bounded right inverse operator $Q$  and a constant $C_{p}$ such that\begin{eqnarray*}
   ||Q\rho||_{L^{p}}\leq C_{p}||\rho||_{L^{p}},DQ=1 \end{eqnarray*}
where $\rho$ is any smooth compactly supported sections of vector bundle $E_{i}$.
\end{lemma}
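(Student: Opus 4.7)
The plan is to construct the right inverse $Q$ explicitly by spectral decomposition of the invertible self-adjoint operator $L$, producing a convolution-type operator whose operator-valued kernel decays exponentially thanks to the spectral gap, and then verifying the $L^{p}$-boundedness via Young's and Minkowski's integral inequalities. This follows the pattern of Donaldson's $L^{2}$-construction, the main new ingredient being the $L^{p}$-estimates for the spectral pieces.

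First, since $Y$ is compact and $L$ is self-adjoint elliptic on $Y$, its spectrum is discrete, and the invertibility hypothesis provides a uniform gap $|\lambda_{j}|\geq c>0$ across all eigenvalues. Let $P^{+}, P^{-}$ be the spectral projections onto the positive and negative eigenspaces respectively, so that $e^{-sL}P^{+}$ (for $s\geq 0$) and $e^{-sL}P^{-}$ (for $s\leq 0$) form well-defined semigroups with exponential decay rate $c$ on their respective subspaces. For each eigenvalue $\lambda_{j}$ with eigenfunction $\phi_{j}$, expanding a test section $\rho(t,y)=\sum_{j}\rho_{j}(t)\phi_{j}(y)$ reduces the equation $Du=\rho$ to the scalar ODEs $u_{j}'(t)+\lambda_{j}u_{j}(t)=\rho_{j}(t)$, whose unique bounded solutions are $u_{j}=K_{\lambda_{j}}*\rho_{j}$ for the one-sided exponential kernel $K_{\lambda}$ (supported on $(0,\infty)$ when $\lambda>0$, on $(-\infty,0)$ when $\lambda<0$), with $\|K_{\lambda}\|_{L^{1}(\mathbb{R})}=1/|\lambda|\leq 1/c$. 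Assemble these to define
\[
(Q\rho)(t,y)=\int_{-\infty}^{t}\bigl(e^{-(t-s)L}P^{+}\rho(s,\cdot)\bigr)(y)\,ds-\int_{t}^{\infty}\bigl(e^{-(t-s)L}P^{-}\rho(s,\cdot)\bigr)(y)\,ds,
\]
so that $DQ\rho=\rho$ holds tautologically on smooth compactly supported sections.

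Second, establish the $L^{p}$-bound. The semigroup estimate $\|e^{-sL^{\pm}}P^{\pm}\|_{L^{p}(Y)\to L^{p}(Y)}\leq e^{-c|s|}$ reduces the estimate on $Q\rho$ to a scalar convolution estimate in the $t$-variable. Applying Minkowski's integral inequality on slices $\{t\}\times Y$ together with Young's convolution inequality in the $t$-direction then yields
\[
\|Q\rho\|_{L^{p}(X)}\leq \frac{2}{c}\,\|\rho\|_{L^{p}(X)}.
\]
Combined with the identity $DQ\rho=\rho$ and the local $L^{p}$-elliptic estimate for the first-order operator $D$, this upgrades to $\|Q\rho\|_{L^{p}_{1}(X)}\leq C_{p}\|\rho\|_{L^{p}(X)}$, and density of $C_{c}^{\infty}$ in $L^{p}$ extends $Q$ to all of $L^{p}$.

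The main obstacle is justifying the $L^{p}$-boundedness of the spectral projections $P^{\pm}$ and the semigroup bounds $\|e^{-sL^{\pm}}P^{\pm}\|_{L^{p}(Y)\to L^{p}(Y)}\leq e^{-cs}$ for $p\neq 2$; these are not automatic from $L^{2}$ spectral theory but follow from the fact that a self-adjoint elliptic operator on a compact manifold admits a bounded functional calculus on $L^{p}(Y)$, a consequence of standard heat kernel estimates. A more self-contained alternative is to appeal directly to the open mapping theorem: once $D:L^{p}_{1}\to L^{p}$ is Fredholm (Donaldson's $L^{2}$-argument extends to $L^{p}$ via local $L^{p}$-elliptic estimates), the zero cokernel hypothesis makes $D$ a surjection between Banach spaces, and any closed complement of the finite-dimensional $\ker D$ in $L^{p}_{1}$ yields a bounded right inverse.
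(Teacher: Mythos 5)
Your construction of $Q$ as convolution against the operator-valued kernel built from the spectral decomposition of $L$ (one-sided exponentials on the positive/negative spectral subspaces, with exponential decay from the spectral gap) is exactly the route the paper indicates, which merely writes $Q(\rho)=\int_{-\infty}^{\infty}K(s-t)\rho(t)\,dt$ and leaves the $L^{p}$ details unstated; your Minkowski--Young argument and the caveat about $L^{p}$-boundedness of the spectral projections correctly fill in what the paper omits. The proposal is correct and essentially the same as the paper's proof.
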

In $L^{2}$ case, we use separation of variables because function space is Hilbert space. When facing $L^{p}$ case, we can not use separation of variables again. Fortunately we can use the expression of inversion operator $Q$ in $L^{2}$ case to construct the inversion operator of $L^{p}$ edition as follows:\begin{eqnarray*}
     Q(\rho)=\int^{\infty}_{-\infty}K(s-t)\rho(t)dt \end{eqnarray*}
     where the kernel K is an operator-valued function.\\
$\textbf{Step 1}$  We construct, for large $T$, an injection \begin{eqnarray*}
    \alpha: ker D^{\sharp}\rightarrow ker D_{1}\oplus ker D_{2} \end{eqnarray*} \par
In fact we construct a map which is close to being an isometric embedding, with respect to the metrics on the kernels induced by the $L^{2}$ norms.
To do this we fix functions $\phi_{1},\phi_{2}$ on $\Sigma^{\sharp(T)}$ such that $\phi^{p}_{1}+\phi^{p}_{2}=1$,with $\phi_{i}$ supported in $X_{i}(3T/2)$ and such that $||\nabla \phi_{i}||_{L^{\infty}}=\epsilon(T)$,where $\epsilon(T)\rightarrow 0$ as $T\rightarrow\infty$. It is easy to write down such functions, indeed we can obviously take $\epsilon(T)=const.T^{-1}$. We then put, for $f\in ker D^{\sharp},\alpha(f)=(f_{1},f_{2})$ where \begin{eqnarray*}
 f_{i}=\phi_{i}f-Q_{i}D_{i}(\phi_{i}f).\end{eqnarray*} \par
Here we are regarding $\phi_{i}f$ as being defined over $\Sigma_{i}$ in the obvious way, using the fact that $\phi_{i}$ is a supported in $\Sigma_{i}(2T)$. The section $f_{i}$ lies in the kernel of $D_{i}$, since $Q_{i}$ is a right inverse. It is also a small perturbation of $\phi_{i}f$ in that we have \begin{eqnarray*}
       ||f_{i}-\phi_{i}f||_{p}&=&||Q_{i}D_{i}(\phi_{i}f)||_{p}\\
                              &\leq& C_{p}||D_{i}(\phi_{i}f)||_{p}\\
                              &=& C_{p}||\nabla\phi_{i}f||_{p}\\
                              &\leq& C_{p}|||\nabla\phi_{i}|f||_{p}\\
                              &\leq& C_{p}\epsilon(T)||f||_{p}.\end{eqnarray*} \par

  Here we have used the fact that $D^{\sharp}f=0$, and that $D_{i}$ can be identified with $D^{\sharp}$ over the support of $\phi_{i}$.\par
To complete the first step we now observe that, for any $f$ on $\Sigma^{\sharp(T)}$,\begin{eqnarray*}
     ||\phi_{1}f||^{p}_{p}+||\phi_{2}f||^{p}_{p}=||f||^{p}_{p}, \end{eqnarray*} \par
     since $\phi^{p}_{1}+\phi^{p}_{2}=1$. Otherwise said, the map $f\mapsto (\phi_{1}f,\phi_{2}f)$ defines an isometric embedding of $L^{p}_{\Sigma^{\sharp(T)}}$ in $L^{p}_{\Sigma^{1}}\oplus L^{p}_{\Sigma^{2}}$. This means that $\alpha$ is approximately an isometry for large $T$, precisely,\begin{eqnarray*}
 |||\alpha(f)||_{p}-||f||_{p}|&=&|||\alpha(f)||_{p}-||(\phi_{1}f,\phi_{2}f)||_{p}|\\
                                  &\leq&||(f_{1}-\phi_{1}f,f_{2}-\phi_{2}f)||_{p}\\
                                  &\leq&\sqrt[p]{2}C_{p}\epsilon(T)||f||_{p}\end{eqnarray*}\par
  where we used  estimate above and the definition of norm that $||(f_{1},f_{2})||^{p}_{p}:=||f_{1}||^{p}_{p}+||f_{2}||^{p}_{p}$ ,
     so $\alpha$ is injective once $T$ is sufficient large such that \begin{eqnarray*}\epsilon(T)<\frac{1}{\sqrt[p]{2}C_{p}} \end{eqnarray*}.\\
     \\
$\textbf{Step 2}$ We show that, under the same assumption of the existence of the right inverses $Q_{i}$, the operator $D^{\sharp}$ is also surjective for large $T$.
To do this it suffices to construct a map $P:L^{p}\rightarrow L^{p}_{1}$ over $\Sigma^{\sharp(T)}$ such that \begin{eqnarray*}
    ||D^{\sharp}P(\rho)-\rho||_{p}\leq k||\rho||_{p} \end{eqnarray*}
    where $k<1$. For then the operator $PD^{\sharp}-1$ is invertible and $Q=P(PD^{\sharp}-1)^{-1}$ is a right inverse for $D^{\sharp}$. We construct $P$ by splicing together the operators $Q_{i}$ over the individual manifolds.
    Write $\beta_{i}=\phi^{p}_{i}$,where $\phi_{i}$ are the cut-off functions above. Thus $\beta_{1}+\beta_{2}=1$. And since $0\leq \phi_{i}\leq1$, the gradient of $\beta_{i}\leq 2\epsilon(T)$.
    We define \begin{eqnarray*}
    P(\rho)=\beta_{1}Q_{1}(\rho_{1})+\beta_{2}Q_{2}(\rho_{2}) \end{eqnarray*}
    over $\Sigma^{\sharp(T)}$, where $\rho_{i}$ is the restriction of $\rho$ to $\Sigma_{i}(2T)\subset \Sigma^{\sharp(T)}$, extended by zero over the remainder of $X_{i}$. Similarly, $\beta_{i}Q_{i}(\rho_{i})$ is regarded as a section over $\Sigma^{\sharp(T)}$, extending by zero outside the support of $\beta_{i}$. Then \begin{eqnarray*}
    D^{\sharp}P(\rho)=(\beta_{1}D^{\sharp}Q_{1}(\rho_{1})+\beta_{2}D^{\sharp}Q_{2}(\rho_{2}))+((\nabla\beta_{1})\ast Q_{1}(\rho_{1})+(\nabla\beta_{2})\ast Q_{2}(\rho_{2})) \end{eqnarray*}\\
  Here, as before, we use $\ast$ to denote a certain algebraic operation. Now $\beta_{i}D^{\sharp}Q_{i}(\rho_{i})=\beta_{i}\rho_{i}=\beta_{i}\rho$, since we can identify $D^{\sharp}$ with $D_{i}$ and $\rho_{i}$ with $\rho$ over the support of $\beta_{i}$. So the first two terms in the expression above yield $\rho$ and the remainder has norm bounded by \begin{eqnarray*}
  \sum||(\nabla\beta_{i})\ast Q_{i}(\rho_{i})||_{p}\leq 4C_{p}\epsilon(T)||\rho||_{p} \end{eqnarray*}
  So \begin{eqnarray*}   ||D^{\sharp}P(\rho)-\rho||_{p}\leq 4C_{p}\epsilon(T)||\rho||_{p} \end{eqnarray*}\\and we achieve the desired "approximate inverse" by taking $T$ so large that $\epsilon(T)\leq 1/(4C_{p})$. This completes the second step in the proof.\\
  \\
$\textbf{Step 3}$ In the third step we construct, under the same assumption of the surjectivity of $D_{i}$, a linear injection $\alpha':ker D_{1}\oplus ker D_{2}\rightarrow ker D^{\sharp}$ for large enough $T$. For this we first return to the construction of the operator $Q$ above, and note that it admit an $L^{2}$ bound:\begin{eqnarray*}
    ||Q(\rho)||_{p}\leq C_{p}||\rho||_{p} \end{eqnarray*}
say, for all large enough values of $T$. With this observation the map $\alpha'$ can be constructed in a similar fashion to the map $\alpha$ in the first step.\par
For elements $f_{i}$ of the kernel of the $D_{i}$ over $\Sigma_{i}$ we set $\alpha'(f_{1},f_{2})=g-QD^{\sharp}g$, where
    $g=\beta_{1}f_{1}+\beta_{2}f_{2}$\par
Here we have identified appropriate sections over $X_{i}$ and $X$, in the way which will now be familiar to the reader. Just as in the first step, we see that the $L^{2}$ norm of the 'correction term' $QD^{\sharp}g$ is bounded by an arbitrarily small multiple of $||(f_{1},f_{2})||_{p}$.
    In fact, \begin{eqnarray*}||QD^{\sharp}g||^{p}_{p}&=&||QD^{\sharp}(\beta_{1}f_{1}+\beta_{2}f_{2})||^{p}_{p}\\
                                 &\leq& C^{p}_{p}||D^{\sharp}(\beta_{1}f_{1}+\beta_{2}f_{2})||^{p}_{p}\\
                                 &\leq& C^{p}_{p}(|||\nabla\beta_{1}|f_{1}+|\nabla\beta_{2}|f_{2}||^{p}_{p})\\
                                 &\leq& (pC_{p}\epsilon(T))^{p}(||f_{1}||^{p}_{p}+||f_{2}||^{p}_{p})\\
                                 &=& (pC_{p}\epsilon(T))^{p}(||(f_{1},f_{2})||^{p}_{p}) \end{eqnarray*} \par
    Here we use again two facts, one is  that $D_{i}f_{i}=0$ and that $D^{\sharp}$ can be identified with $D_{i}$  over the support of $\beta_{i}$, another is that $(||(f_{1},f_{2})||^{p}_{p}:=||f_{1}||^{p}_{p}+||f_{2}||^{p}_{p}$.
It remains only to show that the $L^{p}$ norm of $g$ is close to that of $(f_{1},f_{2})$ for large $T$.\par
    By integrability of $f_{i}$, for any $\eta=\eta(T)>0$, we can choose sufficient large $T_{0}$ such that for $T\geq T_{0}$ then,\begin{eqnarray*}
       \int_{(T/2,\infty)}|f_{i}|^{p}\leq\eta||f_{i}||^{p}_{p}
       , \forall f_{i}\in ker D_{i}.\end{eqnarray*} \par
       Since $\beta_{i}=1$ on the segment $Y\times (0,T/2)$ of the tube in $\Sigma_{i}$, we clearly have \begin{eqnarray*}
       ||g||^{p}_{p}&=&||\beta_{1}f_{1}+\beta_{2}f_{2}||^{p}_{p}\geq\int_{Y\times(0,T/2)}|f_{1}|^{p}+\int_{{\bar Y}\times(0,T/2)}|f_{2}|^{p}\\
                                                &=&||f_{1}||^{p}_{p}+||f_{2}||^{p}_{p}-\int_{Y\times(T/2,\infty)}|f_{1}|^{p}+\int_{{\bar Y}\times(T/2,\infty)}|f_{2}|^{p}\\
                                                &\geq&(1-\eta)(||f_{1}||^{p}_{p}+||f_{2}||^{p}_{p})\\
                                                &=&(1-\eta)||(f_{1},f_{2})||^{p}_{p}). \end{eqnarray*}\par
       And obviously $||g||^{p}_{p}=||\beta_{1}f_{1}+\beta_{2}f_{2}||^{p}_{p}\leq||(f_{1},f_{2})||^{p}_{p}$,
     so we have \begin{eqnarray*} |||(f_{1},f_{2})||^{p}_{p}-||\alpha'(f_{1},f_{2})||^{p}_{p}|&=&|||(f_{1},f_{2})||^{p}_{p}-||g-QD^{\sharp}g||^{p}_{p}|\\
     &\leq&||(f_{1},f_{2})||^{p}_{p}-(||g||^{p}_{p}-||QD^{\sharp}g||^{p}_{p})\\
     &\leq& (\eta(T)+(pC_{p}\epsilon(T))^{p})(||(f_{1},f_{2})||^{p}_{p}) \end{eqnarray*} \par
          Therefore, when $\alpha'(f_{1},f_{2})=0$, we can choose sufficient large $T$ such that $(\eta(T)+(pC_{p}\epsilon(T))^{p})<1$, then by inequality above, we get $||(f_{1},f_{2})||^{p}_{p}=0\Rightarrow (f_{1},f_{2})=(0,0)$, so we prove that $\alpha'$ is an injection.\par
 These first three steps complete the proof of the "gluing formula" in the case when the $D_{i}$ are surjective.
    For in this case we have, by step 2, ind($D_{i}$)=dim ker$D_{i}$, ind($D^{\sharp}$)=dim ker$D^{\sharp}$ for large $T$.
    By step 1, dim ker $D^{\sharp}\leq$dim ker $D_{1}$+dim ker $D_{2}$, and step 3 gives the reverse inequality, so dim ker $D^{\sharp}=$dim ker $D_{1}$+dim ker $D_{2}$ as required. Hence \begin{eqnarray*}
                              \verb"ind"(D^{\sharp})=\verb"ind"(D_{1})+\verb"ind"(D_{2})\end{eqnarray*}\\
$\textbf{Step 4 }$ This step is totally the same as Donaldson's proof of $L^{2}$-edition. First we remove the assumption that the operators $D_{i}$ are surjective. We do this by modifying the operators.
Assume that $n_{i}=$dim coker $D_{i}$, this is well defined because $D_{i}$ are Fredholm operators.
We can choose injective maps $U_{i}:R^{n_{i}}\rightarrow \Gamma(E_{i})$ with images supported in the interior of the $\Sigma_{i}$, and such that \begin{eqnarray*}
      \tilde{D_{i}}\equiv D_{i}\oplus U_{i}: \Gamma(E_{i})\oplus R^{n_{i}}\rightarrow  \Gamma(E_{i}) \end{eqnarray*}
      is surjective. We have\begin{eqnarray*}
       \verb"ind"\tilde{D_{i}} =\verb"ind "D_{i}+\verb"dim" \textit{coker}D_{i}=\verb"ind "D_{i}+n_{i} \end{eqnarray*}

      We can form an obvious operator $\tilde{D^{\sharp}}=D^{\sharp}\oplus U_{1}\oplus U_{2}$ over $\Sigma^{\sharp(T)}$, and the proof above goes without any change to show that
                $\verb"ind"\tilde{D^{^{\sharp}}}=\verb"ind"\tilde{D_{1}}+\verb"ind"\tilde{D_{2}}$, so \begin{eqnarray*}
                \verb"ind"D^{\sharp}=\verb"ind"\tilde{D^{\sharp}}-(n_{1}+n_{2})
                              =\verb"ind"\tilde{D_{1}}+\verb"ind"\tilde{D_{2}}-(n_{1}+n_{2})\\
                              =(\verb"ind"\tilde{D_{1}}-n_{1})+(\verb"ind"\tilde{D_{2}}-n_{2})
                              =\verb"ind"D_{1}+\verb"ind"D_{2} \end{eqnarray*}
                              We ultimately complete the proof of Theorem 2.5. $\Box$.\\
                \par
When $L$ is not invertible, Donaldson considered weight Sobolev space $L^{2,\alpha}$ and $L^{2,\alpha}_{1}$, which are defined as follows:
\begin{equation}
||f||_{L^{2,\alpha}_{1}}=||e^{\alpha t}f||_{L^{2}_{1}}, ||f||_{L^{2,\alpha}}=||e^{\alpha t}f||_{L^{2}} \end{equation}
where $\alpha\in\mathbb{R}$.
Then consider $D_{A}=\frac{d}{dt}+L: L^{2,\alpha}_{1}\rightarrow L^{2,\alpha}$, because multiple operation of $e^{\alpha t}$ is an isometry from $L^{2,\alpha}_{1}$ and $L^{2,\alpha}$ to $L^{2}_{1}$ and $L^{2}$ separately.
Therefore, $D_{A}=\frac{d}{dt}+L: L^{2,\alpha}_{1}\rightarrow L^{2,\alpha}$ is equivalent to:\begin{equation}
 e^{\alpha t}D_{A}e^{-\alpha t}=\frac{d}{dt}+(L-\alpha): L^{2}_{1}\rightarrow L^{2} \end{equation} \par
We can see the introduction of weighted Sobolev space is equivalent to replace the operator $L$ of $(L-\alpha)$, then theorems without weights above can be easily to generalized  in weighted Sobolev space.\par
Now let's consider the gluing problem with weights. Suppose we are in the situation above, we consider $D_{A}=D+\nu:\frac{d}{dt}+L: L^{p,\alpha}_{1}\rightarrow L^{p,\alpha}$\\
where $\nu$ is an algebraic operator, represented as multiplication by $\alpha_{i},i=1,2$ in our description over the $i$th end. The operator $D$ is represented as $\frac{d}{dt_{1}}+L_{Y}$ over the first end and as $\frac{d}{dt_{2}}+L_{\bar Y}$ over the second. When we identify the ends to form $\Sigma^{\sharp}$ we reverse the time co-ordinates, so $\frac{d}{dt_{1}}$ corresponds to $-\frac{d}{dt_{2}}$, and this marries up with the natual identification $L_{Y}=-L_{\bar Y}$. Thus, in our gluing operation, the operator \begin{eqnarray*}
\frac{d}{dt_{1}}+L_{Y}+\alpha_{1} \end{eqnarray*}
over the first end can naturally be identified with \begin{eqnarray*}
\frac{d}{dt_{2}}+L_{\bar Y}+\alpha_{2} \end{eqnarray*}
over the second if $\alpha_{2}=-\alpha_{1}$, now we still write $D^{\sharp}$ as the operator after gluing. \par
In this case the arguments we used before go through without any change to show that
\begin{theorem}
Assuming $\alpha\in \mathbb{R} $ such that $L-\alpha$ is invertible, we have \begin{eqnarray*}
\verb"ind"(D^{\sharp})=\verb"ind"(D^{\alpha_{1}})+\verb"ind"(D^{-\alpha_{1}}) \end{eqnarray*}
\end{theorem}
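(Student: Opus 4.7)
The strategy is to reduce the weighted gluing statement to the unweighted gluing theorem (Theorem 4.4) by conjugating with the weight factor $e^{\alpha t}$. On each end $Y_i\times(0,\infty)$, multiplication by $e^{\alpha_i t}$ is an isometric isomorphism $L^{p,\alpha_i}(E_i)\to L^p(E_i)$ and $L_1^{p,\alpha_i}(E_i)\to L_1^p(E_i)$. Under this conjugation, the weighted operator $D^{\alpha_i}=\frac{d}{dt}+L$ acting on $L_1^{p,\alpha_i}\to L^{p,\alpha_i}$ is unitarily equivalent to the unweighted operator $\frac{d}{dt}+(L-\alpha_i)$ acting on $L_1^p\to L^p$, exactly as in equation (52) of the appendix. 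Since Fredholm index is preserved under conjugation by isomorphisms, one immediately obtains
\begin{equation*}
\mathrm{ind}(D^{\alpha_i})=\mathrm{ind}\bigl(\tfrac{d}{dt}+(L-\alpha_i)\bigr).
\end{equation*}

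Next I would verify that the gluing construction on $\Sigma^\sharp$ is compatible with this conjugation. Setting $\alpha_2=-\alpha_1$, after the rescaling the operator on the first end reads $\tfrac{d}{dt_1}+L_Y-\alpha_1$ and on the second end $\tfrac{d}{dt_2}+L_{\bar Y}+\alpha_1$. Under the identification $(y,t_1)\leftrightarrow(y,2T-t_2)$ the time derivative flips sign and the natural identification gives $L_Y=-L_{\bar Y}$; combined with the sign condition $\alpha_2=-\alpha_1$, the two shifted operators match smoothly across the neck and assemble into a well-defined global operator on $\Sigma^\sharp$, which up to the algebraic multiplication term $\nu$ is precisely $D^\sharp$.

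Finally, the hypothesis that $L-\alpha$ is invertible is exactly what Theorem 4.4 requires for the shifted operators $\tfrac{d}{dt}+(L\mp\alpha_1)$, so applying it yields $\mathrm{ind}(D^\sharp)=\mathrm{ind}(\tfrac{d}{dt}+(L-\alpha_1))+\mathrm{ind}(\tfrac{d}{dt}+(L+\alpha_1))$, and pulling back through the conjugation identity above produces the desired formula. The only point that deserves care — and what I would flag as the potential obstacle — is checking that every ingredient of the unweighted proof (Lemma 4.1 on bounded right inverses, the cutoff-splicing construction, the surjectivity reduction, and the cokernel bookkeeping in Step 4) depends only on the invertibility of whichever self-adjoint operator plays the role of $L$, and not on $\alpha$ being zero. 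This is indeed the case: all norm estimates $\|Q\rho\|_p\leq C_p\|\rho\|_p$ and the convolution kernel construction $Q(\rho)=\int K(s-t)\rho(t)\,dt$ transport verbatim with $L$ replaced by $L-\alpha$, so no new analytic input is needed beyond what already appeared in the $\alpha=0$ case.
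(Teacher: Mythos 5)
Your proposal follows the same route as the paper: conjugating by $e^{\alpha t}$ to identify the weighted operator with the unweighted operator $\tfrac{d}{dt}+(L-\alpha)$, checking that the condition $\alpha_{2}=-\alpha_{1}$ makes the shifted operators match across the glued ends via the time reversal and $L_{Y}=-L_{\bar Y}$, and then re-running the four-step unweighted gluing argument with $L$ replaced by $L-\alpha$. This is precisely the paper's argument, and your closing remark that all estimates depend only on invertibility of the shifted operator is the same observation the paper makes when it says the earlier arguments "go through without any change."
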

We can further consider weight vector case which corresponds to more ends. Choose a weights $\alpha_{i}$ for each end $Y_{i}\times (0,\infty)$ of $\Sigma$. Fix a positive function $W$ on $\Sigma$ which is equal to $e^{\alpha_{i}t}$ on the $i$th end and define norms: \begin{eqnarray*}
   ||f||_{L^{p,\vec{\alpha}}}=||Wf||_{L^{p}}, ||f||_{L_{1}^{p,\vec{\alpha}}}=||Wf||_{L_{1}^{p}} \end{eqnarray*}
   with completions $L^{p,\vec{\alpha}},L_{1}^{p,\vec{\alpha}}$. Different choices of $W$, with the same
   weight vector $\vec{\alpha}=(\alpha_{1},\alpha_{2},\cdots,\alpha_{N} )$, give equivalent norms.\par
Similarly, we can easily obtain index gluing formula in weight vector case. \\
\begin{theorem}
Assuming $\vec{\alpha}\in \mathbb{R}^{N} $ such that $L-\vec{\alpha}$ is invertible, we have
\begin{eqnarray*}
\verb"ind"(D^{\sharp;(\alpha_{2},\cdots,\alpha_{N})})=\verb"ind"(D^{(\alpha_{1},\alpha_{2},\cdots,\alpha_{N})})
+\verb"ind"(D^{(-\alpha_{1},\alpha_{2},\cdots,\alpha_{N})})\end{eqnarray*}
\end{theorem}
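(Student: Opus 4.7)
The plan is to reduce the weighted gluing statement to the unweighted gluing theorem (Theorem 2.5, which is re-stated as Theorem 4.4 in the appendix) by conjugating with the weight function $W$, and then adapt the four-step cut-off/splicing argument already used in the proof of Theorem 2.5 so that it accommodates several un-glued ends each carrying its own weight.

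First I would recall, as observed in the discussion preceding Theorem 4.5, that multiplication by the positive function $W$ (behaving like $e^{\alpha_i t}$ on the $i$-th end) gives isometric identifications $L^{p,\vec{\alpha}} \cong L^p$ and $L_1^{p,\vec{\alpha}} \cong L_1^p$, under which $D$ is conjugated to $D + \nu$, where $\nu$ acts by multiplication by $\alpha_i$ on the $i$-th end. Thus $D^{\vec{\alpha}}$ has the same Fredholm index as its unweighted conjugate, and on the tube form $D = \tfrac{d}{dt} + L$ the relevant operator on the $i$-th end becomes $\tfrac{d}{dt} + (L - \alpha_i)$. Next, I would check that the gluing operation is well-defined: identifying the end with weight $\alpha_1$ to the end with weight $-\alpha_1$ reverses the time coordinate, so $\tfrac{d}{dt_1} + L_Y + \alpha_1$ splices correctly with $\tfrac{d}{dt_2} + L_{\bar Y} - \alpha_1$ via $L_Y = -L_{\bar Y}$, while the weights $\alpha_2, \ldots, \alpha_N$ at the un-glued ends survive as the weight vector of $D^{\sharp;(\alpha_2,\ldots,\alpha_N)}$. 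The invertibility hypothesis on $L - \vec{\alpha}$ is precisely what supplies bounded right inverses $Q_i$ at each end, just as in Lemma 4.5.1, which is the key analytic input for the gluing argument.

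With this setup in place, I would import Steps 1 through 4 from the proof of Theorem 2.5 essentially verbatim. Step 1 builds, using partition cut-offs $\phi_1, \phi_2$ and the inverses $Q_i$, an approximate $L^p$-isometric injection $\ker D^{\sharp} \hookrightarrow \ker D_1 \oplus \ker D_2$ for large neck length $T$. Step 2 constructs an approximate right inverse $P = \beta_1 Q_1 + \beta_2 Q_2$ for $D^{\sharp}$, which becomes an actual right inverse once $\epsilon(T)$ is small enough, giving surjectivity. Step 3 produces the reverse injection $\ker D_1 \oplus \ker D_2 \hookrightarrow \ker D^{\sharp}$, sending $(f_1,f_2) \mapsto g - Q D^{\sharp} g$ with $g = \beta_1 f_1 + \beta_2 f_2$. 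Step 4 removes the surjectivity assumption on $D_i$ by stabilizing with injections $U_i: \mathbb{R}^{n_i} \to \Gamma(E_i)$ supported in the interior of the $X_i$, which are uninvolved in both the gluing and the weights and therefore behave exactly as in the unweighted case.

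The main obstacle I anticipate is verifying uniform-in-$T$ control of the estimates in weighted norms: in particular, the bound $\|Q_i(\rho)\|_{p,\vec{\alpha}} \le C_p \|\rho\|_{p,\vec{\alpha}}$ must hold with a constant independent of $T$, and the corrections $Q_i D_i(\phi_i f)$ and $Q D^{\sharp} g$ must remain supported essentially in the neck region so that their contributions are controlled by $\epsilon(T) \to 0$. Provided the spectral gap coming from the invertibility of $L - \alpha_i$ at each end is used to get such uniform resolvent bounds (so that $Q_i$ is constructed from an explicit translation-invariant kernel $K$ as in the scalar case of the proof of Theorem 2.5), the bookkeeping goes through, and Steps 1--4 yield $\verb"ind"(D^{\sharp;(\alpha_2,\ldots,\alpha_N)}) = \verb"ind"(D^{(\alpha_1,\alpha_2,\ldots,\alpha_N)}) + \verb"ind"(D^{(-\alpha_1,\alpha_2,\ldots,\alpha_N)})$ as required.
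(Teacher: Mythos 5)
Your proposal is correct and follows essentially the same route as the paper: the paper likewise uses multiplication by the weight function $W$ to conjugate $D^{\vec{\alpha}}$ into the unweighted setting with $L$ replaced by $L-\alpha_{i}$ on each end, observes that the glued ends must carry opposite weights $\alpha_{1}$ and $-\alpha_{1}$ under the time-reversal $L_{Y}=-L_{\bar Y}$, and then asserts that the four-step argument of Theorem 2.5 goes through without change. If anything, you are more explicit than the paper, which offers no further detail for the weight-vector case beyond this reduction.
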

\section{Acknowledgements}

The author would like to thank his advisor Professor Huijun Fan for introducing him to the subject, helpful discussions and constant encouragement.\\

\end{document}